\documentclass[11pt]{article}
\usepackage[margin=1in]{geometry}
\usepackage{float,longtable}
\usepackage{enumitem,mathtools,microtype,setspace}
\usepackage[dvipsnames]{xcolor}
\definecolor{BasisMassLink}{RGB}{22,67,114}
\definecolor{BasisMassCite}{RGB}{35,102,78}
\definecolor{BasisMassURL}{RGB}{0,91,150}
\usepackage[
  colorlinks=true,
  linkcolor=BasisMassLink,
  citecolor=BasisMassCite,
  urlcolor=BasisMassURL,
  linktoc=all
]{hyperref}
\makeatletter

\RequirePackage{amsmath}
\RequirePackage{amssymb}
\RequirePackage{amsthm}
\RequirePackage{bm}
\usepackage{bbm}
\RequirePackage{url}
\usepackage{natbib}
\usepackage{multirow}
\usepackage{graphicx}
\usepackage{subfigure}
\usepackage{makecell}
\usepackage{booktabs}
\usepackage{array}
\usepackage{algorithm}
\usepackage{algorithmic}
\usepackage{dsfont}
\RequirePackage[dvipsnames]{xcolor}
\RequirePackage[capitalise,noabbrev,nameinlink]{cleveref}

\newcommand{\tr}{\mathop{\mathrm{tr}}}

\newcommand{\diag}{{\rm diag}}

\newcommand*{\one}{{\bm 1}} 

\newtheoremstyle{mytheoremstyle} 
    {\topsep}                    
    {\topsep}                    
    {\normalfont}                   
    {}                           
    {\bfseries}                   
    {.}                          
    {.5em}                       
    {}  

\theoremstyle{mytheoremstyle}
\theoremstyle{plain}
\crefname{assumption}{Assumption}{Assumptions}
\crefname{theorem}{Theorem}{Theorems}
\Crefname{theorem}{Theorem}{Theorems}
\crefname{lemma}{Lemma}{Lemmas}
\Crefname{lemma}{Lemma}{Lemmas}
\crefname{proposition}{Proposition}{Propositions}
\Crefname{proposition}{Proposition}{Propositions}
\crefname{definition}{Definition}{Definitions}
\Crefname{definition}{Definition}{Definitions}
\crefname{corollary}{Corollary}{Corollaries}
\Crefname{corollary}{Corollary}{Corollaries}
\crefname{remark}{Remark}{Remarks}
\Crefname{remark}{Remark}{Remarks}
\crefname{example}{Example}{Examples}
\Crefname{example}{Example}{Examples}
\crefname{conjecture}{Conjecture}{Conjectures}
\Crefname{conjecture}{Conjecture}{Conjectures}
\crefname{fact}{Fact}{Facts}
\Crefname{fact}{Fact}{Facts}
\crefname{claim}{Claim}{Claims}
\Crefname{claim}{Claim}{Claims}
\Crefname{assumption}{Assumption}{Assumptions}
\crefname{condition}{Condition}{Conditions}
\Crefname{condition}{Condition}{Conditions}
\crefname{equation}{Equation}{Equations}
\Crefname{equation}{Equation}{Equations}
\crefname{table}{Table}{Tables}
\Crefname{table}{Table}{Tables}
\crefname{section}{Section}{Sections}
\Crefname{section}{Section}{Sections}
\crefname{subsection}{Section}{Sections}
\Crefname{subsection}{Section}{Sections}

\ifx\BlackBox\undefined
\newcommand{\BlackBox}{\rule{1.5ex}{1.5ex}}  
\fi

\ifx\QED\undefined
\def\QED{~\rule[-1pt]{5pt}{5pt}\par\medskip}
\fi

\ifx\proof\undefined
\newenvironment{proof}{\par\noindent{\bf Proof\ }}{\hfill\BlackBox\\[2mm]}
\fi
\ifx\solution\undefined
\newenvironment{solution}{\par\noindent{\bf Solution\ }}{\hfill\BlackBox\\[2mm]}
\fi

\ifx\theorem\undefined
\theoremstyle{plain}
\newtheorem{theorem}{Theorem}
\fi
\ifx\example\undefined
\theoremstyle{definition}
\newtheorem{example}[theorem]{Example}
\fi
\ifx\property\undefined
\theoremstyle{plain}
\newtheorem{property}{Property}
\fi
\ifx\lemma\undefined
\theoremstyle{plain}
\newtheorem{lemma}[theorem]{Lemma}
\fi
\ifx\proposition\undefined
\theoremstyle{plain}
\newtheorem{proposition}[theorem]{Proposition}
\fi
\ifx\remark\undefined
\theoremstyle{remark}
\newtheorem{remark}[theorem]{Remark}
\fi
\ifx\corollary\undefined
\theoremstyle{plain}
\newtheorem{corollary}[theorem]{Corollary}
\fi
\ifx\definition\undefined
\theoremstyle{definition}
\newtheorem{definition}[theorem]{Definition}
\fi
\ifx\conjecture\undefined
\theoremstyle{plain}
\newtheorem{conjecture}[theorem]{Conjecture}
\fi
\ifx\fact\undefined
\theoremstyle{plain}
\newtheorem{fact}[theorem]{Fact}
\fi
\ifx\claim\undefined
\theoremstyle{plain}
\newtheorem{claim}[theorem]{Claim}
\fi
\ifx\assumption\undefined
\theoremstyle{definition}
\newtheorem{assumption}[theorem]{Assumption}
\fi
\ifx\condition\undefined
\theoremstyle{definition}
\newtheorem{condition}[theorem]{Condition}
\fi

\let\orig@example\example \let\endorig@example\endexample
\renewenvironment{example}{\crefalias{theorem}{example}\orig@example}{\endorig@example}

\let\orig@lemma\lemma \let\endorig@lemma\endlemma
\renewenvironment{lemma}{\crefalias{theorem}{lemma}\orig@lemma}{\endorig@lemma}

\let\orig@proposition\proposition \let\endorig@proposition\endproposition
\renewenvironment{proposition}{\crefalias{theorem}{proposition}\orig@proposition}{\endorig@proposition}

\let\orig@remark\remark \let\endorig@remark\endremark
\renewenvironment{remark}{\crefalias{theorem}{remark}\orig@remark}{\endorig@remark}

\let\orig@corollary\corollary \let\endorig@corollary\endcorollary
\renewenvironment{corollary}{\crefalias{theorem}{corollary}\orig@corollary}{\endorig@corollary}

\let\orig@definition\definition \let\endorig@definition\enddefinition
\renewenvironment{definition}{\crefalias{theorem}{definition}\orig@definition}{\endorig@definition}

\let\orig@conjecture\conjecture \let\endorig@conjecture\endconjecture
\renewenvironment{conjecture}{\crefalias{theorem}{conjecture}\orig@conjecture}{\endorig@conjecture}

\let\orig@fact\fact \let\endorig@fact\endfact
\renewenvironment{fact}{\crefalias{theorem}{fact}\orig@fact}{\endorig@fact}

\let\orig@claim\claim \let\endorig@claim\endclaim
\renewenvironment{claim}{\crefalias{theorem}{claim}\orig@claim}{\endorig@claim}

\let\orig@assumption\assumption \let\endorig@assumption\endassumption
\renewenvironment{assumption}{\crefalias{theorem}{assumption}\orig@assumption}{\endorig@assumption}

\let\orig@condition\condition \let\endorig@condition\endcondition
\renewenvironment{condition}{\crefalias{theorem}{condition}\orig@condition}{\endorig@condition}

\numberwithin{equation}{section}
\numberwithin{theorem}{section}

\makeatother
\numberwithin{theorem}{section}

\crefname{equation}{Equation}{Equations}
\Crefname{equation}{Equation}{Equations}
\crefname{section}{Section}{Sections}
\Crefname{section}{Section}{Sections}
\crefname{subsection}{Section}{Sections}
\Crefname{subsection}{Section}{Sections}
\crefname{subsubsection}{Section}{Sections}
\Crefname{subsubsection}{Section}{Sections}
\crefname{table}{Table}{Tables}
\Crefname{table}{Table}{Tables}
\crefname{appendix}{Appendix}{Appendices}
\Crefname{appendix}{Appendix}{Appendices}

\newcommand{\R}{\mathbb R}

\hypersetup{
  pdftitle={A Sharp Basis-Mass Inequality for Binary Matroids without a Fano Minor},
  pdfauthor={Haozhou Zhang},
  pdfsubject={A sharp basis-generating-polynomial inequality for binary matroids without a Fano minor},
  pdfkeywords={binary matroid, regular matroid, Fano matroid,
               basis-generating polynomial, hypergraph Lagrangian,
               Seymour decomposition, Turan basis density}
}

\title{A Sharp Basis-Mass Inequality\\
for Binary Matroids without a Fano Minor}
\author{Haozhou Zhang\\
Department of Mathematics and Statistics\\
University of Idaho\\
\href{mailto:zhan3063@vandals.uidaho.edu}{\texttt{zhan3063@vandals.uidaho.edu}}}
\date{9 October 2026}

\begin{document}
\maketitle

\begin{abstract}
Let \(M\) be a binary matroid of rank \(r\) with no \(F_7\)-minor, and let
\(B_M^\star\) denote the maximum of its basis-generating polynomial over
nonnegative weights of total mass \(r\).  We prove the sharp inequality
\(B_M^\star\leq 2^r/(r+1)\), with equality holding if and only if the
nonloop simplification of \(M\) is \(M(K_{r+1})\).
Binary matroids with no \(F_7\)-minor also form the largest minor-closed
class of matroids on which this inequality holds.
\end{abstract}

\medskip
\noindent\textbf{MSC 2020.}
Primary 05B35; secondary 05C50, 90C25.

\smallskip
\noindent\textbf{Keywords.}
binary matroid; regular matroid; Fano matroid; basis-generating polynomial;
hypergraph Lagrangian; Seymour decomposition; Tur\'an basis density.

\section{Introduction}
\label{sec:introduction}

\citet[Theorem~14.3]{Seymour1980} proved that every regular matroid can be
obtained from graphic matroids, cographic matroids, and \(R_{10}\) by direct
sums, two-sums, and three-sums.  He also showed that every binary matroid
with no \(F_7\)-minor can be obtained from regular matroids and copies of
\(F_7^*\) using only direct sums and two-sums
\citep{Seymour1981Multicommodity}; see also
\cite[Corollary~12.2.6]{Oxley2011}.  Here \(F_7\) is the Fano matroid and
\(F_7^*\) is its dual.

We use the regular decomposition to prove a sharp bound for the weighted
number of bases.  An estimate for \(F_7^*\), together with the direct- and
two-sum inequalities, then extends the bound to all binary matroids without
an \(F_7\)-minor.

We follow \cite{Oxley2011} for standard matroid terminology.  A matroid is
\emph{binary} if it is representable over \(\mathbb F_2\), and
\emph{regular} if it is representable over every field.  By Tutte's
representation theorem, regularity is equivalent to admitting a totally
unimodular real representation \citep{Tutte1958a,Tutte1958b}; see also
\cite[Theorem~6.6.3 and Lemma~2.2.21]{Oxley2011}.
For a rank-\(r\) matroid \(M\) on a finite ground set \(E\), define its
basis-generating polynomial by
\[
 B_M(\lambda)
   :=\sum_{B\in\mathcal B(M)}\prod_{e\in B}\lambda_e,
\]
where \(\mathcal B(M)\) denotes the set of bases of \(M\).
We study its maximum over nonnegative weights of total mass \(r\):
\[
 B_M^\star
   :=\max\left\{B_M(\lambda):
          \lambda\in\R_{\geq0}^{E},\ \sum_{e\in E}\lambda_e=r\right\}.
\]
For \(r\ge1\), the uniform weighting on an \(n\)-element ground set gives
\(B_M(r/n,\ldots,r/n)=(r/n)^r|\mathcal B(M)|\), so a bound on
\(B_M^\star\) also bounds the number of bases.  Substituting
\(x_e=\lambda_e/r\) shows that \(r^{-r}B_M^\star\) is the hypergraph
Lagrangian of \((E,\mathcal B(M))\), with total weight one, in the
normalisation of \citet{vanDerPolWalshWigal2025}.

\paragraph{General bounds.}
Every rank-\(r\), \(n\)-element matroid, with \(n\ge r\ge1\), satisfies
\[
 B_M^\star\le \binom nr\left(\frac rn\right)^r
             \le \frac{r^r}{r!}.
\]
Indeed, every basis is an \(r\)-element subset, so
\(B_M(\lambda)\le e_r(\lambda)\), where \(e_r\) is the \(r\)-th
elementary symmetric polynomial.  This polynomial is maximised at equal
weights under a fixed nonnegative total.  To see this for \(r\ge2\), write
\(\mu\) for all but two coordinates and use
\[
 e_r(a,b,\mu)
   =ab\,e_{r-2}(\mu)+(a+b)e_{r-1}(\mu)+e_r(\mu),
\]
with \(e_0=1\) and \(e_k=0\) when \(k\) exceeds the number of coordinates.
Averaging \(a,b\) cannot decrease the expression.  Repeatedly averaging
a largest and a smallest coordinate, and using continuity, gives the
uniform maximum.  The case \(r=1\) is immediate.

The uniform matroid \(U_{r,n}\) attains the finite-\(n\) bound at uniform
weights, and
\[
 \binom nr\left(\frac rn\right)^r
   =\frac{r^r}{r!}\prod_{j=0}^{r-1}\left(1-\frac jn\right)
   \longrightarrow \frac{r^r}{r!}
   \qquad(n\to\infty,\ r\text{ fixed}).
\]
Thus the rank-only constant cannot be improved for all matroids.  For
fixed \(r\ge2\), it is a supremum over finite ground-set sizes, not an
attained maximum.  In rank one the value \(1\) is attained by placing
all weight on nonloops.

\paragraph{Binary matroids.}
A matroid is binary if and only if it has no \(U_{2,4}\)-minor
\cite[Theorem~6.5.4]{Oxley2011}.  The sharp binary maximum is attained by
\(\mathrm{PG}(r-1,2)\) with uniform weights
\citep[Corollary~2.4 and Section~2.4]{Alon2024}; see also
\cite[Theorem~4.4]{vanDerPolWalshWigal2025}.  In our normalisation its value is
\[
 \beta_r
   =\frac{r^r}{r!}
      \frac{\prod_{i=0}^{r-1}(2^r-2^i)}{(2^r-1)^r}.
\]
The ratio \(\beta_r/(r^r/r!)\) tends to the positive constant
\(\prod_{j\ge1}(1-2^{-j})\).  Hence the unrestricted and binary benchmarks
both have order \(e^r/\sqrt r\) as \(r\) grows.

Regular matroids are precisely the binary matroids with neither an
\(F_7\)- nor an \(F_7^*\)-minor \citep{Tutte1958a,Tutte1958b}.
We show that excluding \(F_7\) alone already reduces the sharp value to
\[
 c_r:=\frac{2^r}{r+1},
\]
which has order \(2^r/r\) and is attained by \(M(K_{r+1})\).
Thus the complete graphic matroid remains extremal even when the class is
enlarged beyond the regular matroids.
The corresponding unweighted question is posed by
\citet[Problem~7.7]{vanDerPolWalshWigal2025}: determine the largest number
of bases of an \(n\)-element, rank-\(r\), binary matroid with no
\(F_7\)-minor.  The weighted theorem below yields exact and asymptotic
results for this problem.

\begin{theorem}
\label{thm:introduction-basis-mass}
Every binary matroid \(M\) of rank \(r\geq1\) with no \(F_7\)-minor satisfies
\[
 B_M^\star\leq\frac{2^r}{r+1}.
\]
A feasible weighting \(\lambda\) satisfies
\(B_M(\lambda)=2^r/(r+1)\) if and only if every loop has weight zero, the
nonloop simplification of \(M\) is \(M(K_{r+1})\), and every nonloop
parallel class has total weight \(2/(r+1)\).
\end{theorem}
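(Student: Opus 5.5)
The strategy is induction along Seymour's decomposition. By \citet{Seymour1981Multicommodity}, every binary matroid with no \(F_7\)-minor is built from regular matroids and copies of \(F_7^*\) by direct sums and two-sums. Regular matroids in turn decompose into graphic, cographic and \(R_{10}\) pieces via direct sums, two-sums and three-sums \citep[Theorem~14.3]{Seymour1980}. I would first normalise. Loops contribute nothing to any basis, so they should get weight zero. Parallel elements can be merged: \(B_M\) depends only on the total weight of each parallel class. Hence it suffices to treat simple matroids and then read off the equality conditions.

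\textbf{Composition inequalities.} Set \(\phi(r)=2^r/(r+1)\). For a direct sum \(M=M_1\oplus M_2\) we have \(B_M(\lambda)=B_{M_1}(\lambda_1)B_{M_2}(\lambda_2)\). If the masses are \(s_1,s_2\) with \(s_1+s_2=r\), homogeneity gives
\[
B_M(\lambda)\le (s_1/r_1)^{r_1}(s_2/r_2)^{r_2}\phi(r_1)\phi(r_2)\le \phi(r_1)\phi(r_2).
\]
The last step is AM--GM with optimum \(s_i=r_i\). Moreover \(\phi(r_1)\phi(r_2)<\phi(r_1+r_2)\), since \((r_1+1)(r_2+1)>r_1+r_2+1\).

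For a two-sum \(M=M_1\oplus_2M_2\) along a basepoint \(p\), we have \(r=r_1+r_2-1\). Write \(B_{M_i}=B_{M_i\setminus p}+\lambda_p B_{M_i/p}\). Then
\[
B_M=B_{M_1\setminus p}B_{M_2/p}+B_{M_1/p}B_{M_2\setminus p}.
\]
The plan is to bound this by introducing an auxiliary weight \(t\) on \(p\) in each side. We need a two-variable estimate of the form: if \(B_{M_i\setminus p}(\mu)\) and \(B_{M_i/p}(\mu)\) are given, then \(a_i+tb_i\le\phi(r_i)\) when the mass is \(r_i-t\). Optimising \(t\) and combining should yield \(\phi(r_1+r_2-1)\).

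The cleanest way to organise this is a stronger inductive hypothesis. I would define the \emph{pair function}
\[
F_M(s,u)=\max\{B_{M\setminus p}+uB_{M/p}\}
\]
over weights of mass \(s\), and prove it is at most the corresponding quantity for \(M(K_{r+1})\) with \(p\) an edge. That comparison value is explicitly computable from spanning-tree counts via Kirchhoff and symmetry. Three-sums need a similar strengthened hypothesis with a triangle as the distinguished set.

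\textbf{Base cases.} Graphic matroids are the core case. For a graph \(G\) on \(r+1\) vertices with edge weights of total \(r\), the matrix-tree theorem gives \(B_M(\lambda)=\det L_\lambda'\), a reduced weighted Laplacian. Adding edges only increases \(B\), and by log-concavity and symmetry of \(\det L'\) the maximum over all weightings of \(K_{r+1}\) occurs at uniform weights. There each edge has weight \(2/(r+1)\), so \(L=\tfrac{2}{r+1}(\,(r+1)I-J)\). Its nonzero eigenvalues are all \(2\), giving \(\det L'=2^r/(r+1)\). Concavity of \(\log\det\) on the relevant face gives uniqueness of the optimum, which supplies the equality case.

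For cographic matroids I would use duality: \(B_{M^*}(\lambda)\) relates to \(\prod\lambda_e\cdot B_M(1/\lambda)\). Bounding cographic duals of \(K_{n}\)-like graphs directly via an AM--GM argument should give strict inequality when \(r\ge 2\). The finite pieces \(R_{10}\) and \(F_7^*\) I would handle by explicit computation: \(R_{10}\) has rank \(5\) and \(162\) bases, and \(F_7^*\) has rank \(4\) and \(28\) bases. Symmetry plus concavity-type arguments or a direct Lagrangian computation should show they fall strictly below \(\phi(5)=16/3\) and \(\phi(4)=16/5\), respectively.

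\textbf{Main obstacle.} I expect the hardest step to be the two-sum and three-sum inequalities. The naive bound \(\phi(r_1)\phi(r_2)\) is too weak, because \(r\) is only \(r_1+r_2-1\). The key is therefore the right strengthened inductive statement about the joint maximum of deletion and contraction parts. I would first guess that the extremal pair is given by \(K_{r_i+1}\) with the basepoint as an edge. I would then verify that gluing two cliques along an edge is strictly worse than \(K_{r+1}\), which yields strict inequality for every genuine decomposition. That in turn forces equality only for the graphic \(K_{r+1}\) piece.
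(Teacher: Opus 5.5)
Your architecture matches the paper's: Seymour's two decompositions, the loop/parallel normalisation, homogeneity plus AM--GM for direct sums, and the two-sum formula $B_M=B_{M_1\setminus p}B_{M_2/p}+B_{M_1/p}B_{M_2\setminus p}$. However, you have put the difficulty in the wrong sum step and left the hard one unproved. With $c_r=2^r/(r+1)$ (your $\phi(r)$), the naive product bound \emph{does} suffice for two-sums, because $(r_1+1)(r_2+1)-2(r_1+r_2)=(r_1-1)(r_2-1)\ge0$. All that is needed is $B^\star_{N_1\oplus_2N_2}\le B^\star_{N_1}B^\star_{N_2}$, and your auxiliary-weight idea proves it with one specific choice. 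At a maximiser write $a_i=B_{N_i/p}$, $c_i=B_{N_i\setminus p}$, $U=a_1c_2$, $V=c_1a_2$, $B=U+V$, and give $p$ weight $\alpha=U/B$ in $N_1$ and $\beta=V/B$ in $N_2$. Euler's identity on each side, together with the KKT equations $\partial_eB=B$ on the support, shows that the side masses are exactly $r_1-\alpha$ and $r_2-\beta$. So both completions are feasible, and $(\alpha a_1+c_1)(\beta a_2+c_2)-B=c_1c_2(B-a_1a_2)^2/B^2\ge0$. The pair-function hypothesis you propose instead is unnecessary, and it is false as stated. If $p$ is added parallel to an edge of $M(K_{r+1})$, then $M\setminus p=M(K_{r+1})$, so $F_M(s,0)=(s/r)^rc_r$ exceeds the corresponding value for $K_{r+1}$ with $p$ an edge. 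Such factors do arise in two-sum decompositions of simple matroids: two-sum this $M$ with a triangle along $p$. Three-sums are where the naive constant genuinely fails ($r=a+b-2$, and $c_3^2=4>c_4=16/5$), and ``a similar strengthened hypothesis with a triangle'' supplies nothing. The paper proves the saving $B_M^\star<\frac{r}{r+1}B_{N_1}^\star B_{N_2}^\star$ for factor ranks $a,b\ge3$, which suffices because $(a+1)(b+1)-4r=(a-3)(b-3)\ge0$. That proof uses a compatible totally unimodular representation of the sum and Schur complements reducing each side to a response $x_i\in\R_{\ge0}^3$ with $B_M=d_1d_2e_2(x_1+x_2)$. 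It then needs KKT mass identities, trace caps $|x_i|\le m_i$, water-filling completion of the triangle, concavity of $\sqrt{e_2}$ to reduce to extreme splits, and the scalar separator inequality of the appendix. Rank-two factors are handled separately as parallel replacements.

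The cographic base case is also missing. The identity $B_{M^*}(\lambda)=\prod_e\lambda_e\,B_M(\lambda^{-1})$ gives no upper bound for a maximum over the simplex. Unlike the graphic case, there is in general no single cographic matroid of rank $r$ containing all others as restrictions, so there is no ``$K_n$-like'' object to bound directly. Your predicted strict inequality for $r\ge2$ is false: $M^*(K_4)\cong M(K_4)$ attains $c_3$, and $M^*(G)$ is graphic whenever $G$ is planar. The paper settles planar $G$ by duality and treats nonplanar $G$ (cycle rank at least four) by a descent on $\rho(G)=C_G^\star/c_r$. It contracts the zero-weight forest, excludes bridges, merges parallel edges and suppresses degree-two vertices. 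It then shows that either an edge of weight at least $1/2$, or a cubic vertex whose star has mass at least $r/(r-1)$, gives a deletion that strictly increases $\rho$. The second case uses the identity $\partial_{hk}C_G/C_G=3/4$, which the KKT conditions force at a cubic vertex. The paper's equality characterisation rests on these strict descents, together with strictness of the two- and three-sum estimates once simplicity excludes rank-one and rank-two factors. Your plan to compare ``two cliques glued along an edge'' with $K_{r+1}$ depends on the unproved pair-function comparison. Your remaining base cases are workable. Log-concavity plus $S_{r+1}$-symmetry for $K_{r+1}$ is a valid alternative to the paper's eigenvalue AM--GM. Transitivity plus log-concavity gives $162/2^5=81/16$ for $R_{10}$ and $28(4/7)^4$ for $F_7^*$. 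For $F_7^*$ this needs log-concavity of a non-determinantal basis polynomial (Lorentzian theory), whereas the paper uses $F_7^*/e\cong M(K_4)$ and a tangent-line bound.
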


\paragraph{Proof outline.}
After the optimisation and direct-sum lemmas in
\Cref{sec:basis-mass-framework,sec:regular-direct-sums}, we prove the graphic
and \(R_{10}\) bounds in \Cref{sec:regular-base-pieces}.  The cographic proof
uses edge deletions to lower the rank while controlling the loss of basis
mass (\Cref{sec:cographic-reduction}).

For two-sums, restoring the shared element gives a product bound.
For a proper regular three-sum
\(M=N_1\oplus_3N_2\) with factor ranks \(a,b\), the rank is
\(r=a+b-2\), and a product bound alone need not give
\(c_r\).  The triangle-completion argument supplies the stronger estimate
\[
 B_M^\star<\frac{r}{r+1}B_{N_1}^\star B_{N_2}^\star
 \qquad(a,b\ge3),
\]
whose extra factor is sufficient because
\(\frac{r}{r+1}c_ac_b\le c_r\).
\Cref{sec:regular-two-three-sums} proves these comparisons and handles
rank-two factors separately.  These estimates give the regular bound by
Seymour's decomposition theorem (\Cref{sec:seymour-assembly}).

The passage from regular matroids to the full class uses only the
\(F_7^*\) estimate and the direct- and two-sum inequalities
(\Cref{sec:fano-minor-free}).  For the equality statement, first-order
optimality controls zero-weight nonloop elements, so the conclusion
concerns the full nonloop simplification, not merely the positive-weight
support.

\begin{corollary}
\label{cor:largest-minor-closed-class}
The binary matroids with no \(F_7\)-minor form the largest minor-closed class
of finite matroids for which every member \(M\) satisfies
\[
 B_M^\star\leq\frac{2^{r(M)}}{r(M)+1}.
\]
\end{corollary}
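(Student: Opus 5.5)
The corollary has two halves. First, the class of binary matroids with no \(F_7\)-minor is minor-closed, and \Cref{thm:introduction-basis-mass} shows that every member of rank \(r\ge1\) satisfies the bound. Second, this class is the largest such class: any minor-closed class containing a matroid outside it must also contain a matroid violating the bound.

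For rank zero we interpret the right-hand side as \(2^0/1=1\). We have \(B_M^\star=1\), since the empty set is the unique basis and the empty product equals one. Hence every matroid satisfies the bound in rank zero.

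For maximality, let \(\mathcal C\) be a minor-closed class in which every member satisfies the bound, and suppose \(M\in\mathcal C\) is not a binary matroid without an \(F_7\)-minor. There are two cases.

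\textbf{Case 1: \(M\) is not binary.} Then \(M\) has a \(U_{2,4}\)-minor \cite[Theorem~6.5.4]{Oxley2011}, so \(U_{2,4}\in\mathcal C\). Give each of the four elements weight \(1/2\), so the total mass is \(2\). All six pairs are bases, so
\[
 B_{U_{2,4}}\left(\tfrac12,\tfrac12,\tfrac12,\tfrac12\right)=6\cdot\tfrac14=\tfrac32>\tfrac43=\frac{2^2}{3}.
\]
So \(U_{2,4}\) violates the bound.

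\textbf{Case 2: \(M\) is binary but has an \(F_7\)-minor.} Then \(F_7\in\mathcal C\). Here \(F_7=\mathrm{PG}(2,2)\) has rank \(3\) and \(7\) elements. Its bases are the \(\binom73-7=28\) non-collinear triples. Give each element weight \(3/7\), so the total mass is \(3\). This gives
\[
 B_{F_7}=28\cdot\frac{27}{343}=\frac{756}{343}\approx2.204>2=\frac{2^3}{4}.
\]
Equivalently, this is \(\beta_3\) from the introduction: \(\frac{27}{6}\cdot\frac{7\cdot6\cdot4}{343}=\frac{756}{343}\). So \(F_7\) violates the bound.

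In both cases \(\mathcal C\) contains a violator, a contradiction. Therefore every minor-closed class satisfying the bound is contained in the class of binary matroids with no \(F_7\)-minor, and that class satisfies the bound by the theorem. The only real content is the two explicit computations above; no step is a genuine obstacle, apart from checking the rank-zero convention.
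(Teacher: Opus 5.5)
Your proposal is correct and follows the paper's proof. You use uniform weightings to show that \(U_{2,4}\) and \(F_7\) violate the bound (\(3/2>4/3\) and \(756/343=108/49>2\)), then Tutte's excluded-minor characterisation of binary matroids, then the main theorem together with the rank-zero value \(B_M^\star=1\) for the converse; the paper additionally computes \(B_{U_{2,4}}^\star=3/2\) exactly via Cauchy--Schwarz, but only the lower bound you give is needed.
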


For an \(n\)-element matroid of rank \(r\ge1\) in this class, evaluation at
uniform weights yields
\[
 |\mathcal B(M)|
 \leq\frac{2^r}{r+1}\left(\frac nr\right)^r.
\]
In the graphic case, the weighted inequality is a continuous counterpart of
Kelmans's extremal spanning-tree result \citep{Kelmans1967}.
\Cref{cor:fano-free-basis-count} determines the maximum when
\(\binom{r+1}{2}\) divides \(n\), and
\Cref{cor:fano-free-limiting-density} gives the limiting basis density
\(2^r r!/[r^r(r+1)]\) for each fixed rank.  The exact maximum for general
nondivisible \(n\) is not determined here.
For simple matroids the theorem also gives
\(|\mathcal B(M)|\leq(r+1)^{r-1}\), with equality if and only if
\(M\cong M(K_{r+1})\).

\section{Optimising the basis polynomial}
\label{sec:basis-mass-framework}

Since the proof proceeds by decomposition, each step requires us to compare
the optimum of a matroid with the optima of its pieces.  We use three facts in
making these comparisons.  First, a maximiser
may be restricted to its positive support, where the Karush--Kuhn--Tucker
(KKT) conditions take a simple form.  Second, a totally unimodular representation
expresses the basis polynomial as a weighted Gram determinant, so its
derivatives can be computed directly.  Third, parallel elements may be merged
by adding their weights.  Restricting to
positive support suffices for estimating the optimum, but the equality
classification will also require control of zero-weight nonloop elements.

\begin{lemma}
\label{lem:positive-support-reduction}
Let \(M\) have rank \(r\geq1\), let \(\lambda\) maximise \(B_M\) on the
rank simplex, and suppose
\(B_M(\lambda)>0\).  If \(A=\{e:\lambda_e>0\}\), then
\begin{enumerate}[label=\textup{(\roman*)}]
\item \(r(M|A)=r\);
\item \(B_M(\lambda)=B_{M|A}(\lambda_A)\);
\item \(\lambda_A\) is an interior maximiser for \(M|A\); and
\item for every \(e\in A\), \(\partial_e B_M(\lambda)=B_M(\lambda)\).
\end{enumerate}
For \(e\notin A\), the one-sided KKT inequality is
\(\partial_eB_M(\lambda)\le B_M(\lambda)\).
\end{lemma}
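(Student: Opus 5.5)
The plan is to use two features of \(B_M\): it is multilinear, and it is homogeneous of degree \(r\) in \(\lambda\). I would prove the four items in order, then the one-sided inequality.

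For (i) and (ii), every monomial of \(B_M\) is \(\prod_{e\in B}\lambda_e\) for a basis \(B\). Such a monomial is nonzero at \(\lambda\) only if \(B\subseteq A\). Since \(B_M(\lambda)>0\), some basis lies inside \(A\), so \(r(M|A)=r\). The bases of \(M|A\) are then exactly the bases of \(M\) contained in \(A\), and the monomials indexed by the remaining bases vanish at \(\lambda\). This gives \(B_M(\lambda)=B_{M|A}(\lambda_A)\).

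For (iii), note that \(\sum_{e\in A}\lambda_e=r\), so \(\lambda_A\) is feasible for \(M|A\), and its coordinates are strictly positive. Any feasible \(\mu\) for \(M|A\) extends by zero to a feasible point of \(M\). The same restriction identity gives \(B_{M|A}(\mu)=B_M(\mu,0)\le B_M(\lambda)=B_{M|A}(\lambda_A)\). Hence \(\lambda_A\) is a maximiser for \(M|A\) lying in the relative interior of the simplex on \(A\).

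For (iv), I would use the Lagrange condition at this relative interior point. The nonnegativity constraints on \(A\) are inactive and only the linear constraint \(\sum\lambda_e=r\) binds. So for each \(e\in A\) we get \(\partial_eB_M(\lambda)=\theta\) for a common multiplier \(\theta\). To make this concrete, I would perturb along \(\lambda+t(\mathbf 1_e-\mathbf 1_f)\) for \(e,f\in A\) and \(|t|\) small. This stays feasible, so the derivative at \(t=0\) vanishes, giving \(\partial_e B_M=\partial_f B_M\). One point needs care here: the partial derivatives of \(B_M\) and of \(B_{M|A}\) agree for \(e\in A\), because the monomials of bases not contained in \(A\) contain some factor \(\lambda_g\) with \(g\notin A\). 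Differentiating such a monomial in \(\lambda_e\) leaves a term that still has that zero factor, so it contributes nothing. To identify \(\theta\), I would apply Euler's identity for degree-\(r\) homogeneous polynomials:
\[
rB_M(\lambda)=\sum_{e}\lambda_e\partial_eB_M(\lambda)=\sum_{e\in A}\lambda_e\theta=r\theta.
\]
Since \(r\ge1\), this gives \(\theta=B_M(\lambda)\).

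For \(e\notin A\), I would use the feasible perturbation \(\lambda+t(\mathbf 1_e-\mathbf 1_f)\) with \(t\ge0\) small and some \(f\in A\). Maximality makes the one-sided derivative nonpositive, so \(\partial_eB_M(\lambda)-\partial_fB_M(\lambda)\le0\). Combined with (iv), this yields \(\partial_eB_M(\lambda)\le B_M(\lambda)\).

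None of these steps is a serious obstacle. The only care needed is in the equality of partial derivatives just described, and in the fact that Euler's identity applies to the full polynomial \(B_M\) on all of \(E\); the zero coordinates drop out of the Euler sum.
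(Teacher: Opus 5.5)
Your proposal is correct and follows the paper's proof essentially step by step. You use vanishing of monomials meeting $E\setminus A$ for (i)--(ii), extension by zero for (iii), a common multiplier identified through Euler's identity for (iv), and a one-sided feasible perturbation for the zero coordinates; the exchange directions $\mathbf 1_e-\mathbf 1_f$ just make explicit the KKT condition the paper invokes, and since you perturb within the simplex of $M$ itself, your remark comparing partial derivatives of $B_M$ and $B_{M|A}$ is harmless but not needed.
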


\begin{proof}
Since \(B_M(\lambda)>0\), some basis is contained in \(A\), and hence
\(r(M|A)=r\), proving (i).  Every monomial meeting \(E\setminus A\)
vanishes, so the restriction preserves the basis mass and gives (ii).  Any
improvement of the restricted weighting could be extended by zero to improve
the original one; thus the restriction is an interior maximiser, proving
(iii).

At an interior maximiser, all partial derivatives have a common value \(\xi\).
Euler's identity and
\(\sum_{e\in A}\lambda_e=r\) give
\[
 rB_M=\sum_{e\in A}\lambda_e\partial_eB_M=\xi r,
\]
hence \(\xi=B_M\), which is (iv).  A zero coordinate permits motion only into
the simplex, and its KKT condition gives the final one-sided inequality.
\end{proof}

For a regular matroid, a totally unimodular representation gives a
determinantal formula for the basis polynomial.  The preceding derivative
condition can then be read as
a statement about the lengths of the representing columns.  This is the
geometric form of optimality used later; in optimal-design terminology, it is
the Kiefer--Wolfowitz equivalence condition \citep{KieferWolfowitz1960}.

\begin{lemma}
\label{lem:regular-gram-calculus}
Let \(A=(a_e)_{e\in E}\) be a full-row-rank totally unimodular
representation of a rank-\(r\) regular matroid \(M\), and put
\[
 H(\lambda):=\sum_{e\in E}\lambda_ea_ea_e^{\top}.
\]
Then
\[
 B_M(\lambda)=\det H(\lambda).
\]
If \(H(\lambda)\) is positive definite, define
\(
 \langle u,v\rangle_H:=u^{\top}H(\lambda)^{-1}v
\).
For distinct \(e,f\),
\begin{align}
 \frac{\partial_eB_M(\lambda)}{B_M(\lambda)}
 &=\langle a_e,a_e\rangle_H,
 \label{eq:regular-gram-first-derivative}\\
 \frac{\partial_{ef}B_M(\lambda)}{B_M(\lambda)}
 &=\det\begin{pmatrix}
   \langle a_e,a_e\rangle_H&\langle a_e,a_f\rangle_H\\
   \langle a_f,a_e\rangle_H&\langle a_f,a_f\rangle_H
  \end{pmatrix}.
 \label{eq:regular-gram-second-derivative}
\end{align}
Consequently, after restricting an optimal weighting to its positive
support, every active column has \(H\)-norm one.
\end{lemma}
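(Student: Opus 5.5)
The plan is to use the Cauchy--Binet formula for the determinantal identity, and then differentiate the determinant using the rank-one structure of \(H(\lambda)\) in each variable. Write \(H(\lambda)=A\Lambda A^{\top}\) with \(\Lambda=\diag(\lambda)\). Cauchy--Binet gives
\[
 \det H(\lambda)=\sum_{|S|=r}\det(A_S)^2\prod_{e\in S}\lambda_e ,
\]
where \(A_S\) is the \(r\times r\) column submatrix indexed by \(S\). By total unimodularity, \(\det A_S\in\{0,\pm1\}\). Since \(A\) represents \(M\) and has full row rank \(r\), \(\det A_S\neq0\) exactly when \(S\) is a basis of \(M\). Hence \(\det(A_S)^2\) is the indicator of \(\mathcal B(M)\), and \(B_M(\lambda)=\det H(\lambda)\) as polynomials. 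In particular the identity holds for all real \(\lambda\), so it can be differentiated freely.

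For the derivatives, \(H\) is affine in each coordinate: \(H(\lambda+t\,\delta_e)=H+t\,a_ea_e^{\top}\). Assume \(H\) is positive definite. The matrix determinant lemma gives
\[
 \det(H+t\,a_ea_e^{\top})=\det H\,\bigl(1+t\,a_e^{\top}H^{-1}a_e\bigr).
\]
This is linear in \(t\), so its \(t\)-derivative is \(\det H\,\langle a_e,a_e\rangle_H\), which proves \eqref{eq:regular-gram-first-derivative}.

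For the mixed derivative with \(e\ne f\), use the rank-two version,
\[
 \det(H+s\,a_ea_e^{\top}+t\,a_fa_f^{\top})=\det H\,\det\bigl(I_2+D\,U^{\top}H^{-1}U\bigr),
\]
where \(U=[a_e\ a_f]\) and \(D=\diag(s,t)\). Expanding the \(2\times2\) determinant gives
\[
 1+sG_{ee}+tG_{ff}+st\,(G_{ee}G_{ff}-G_{ef}G_{fe}),
\]
where \(G=U^{\top}H^{-1}U\) is the \(H\)-Gram matrix of \(a_e,a_f\). The coefficient of \(st\) is \(\partial_{ef}\) of the left side evaluated at \(s=t=0\). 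Dividing by \(\det H=B_M(\lambda)\) gives \eqref{eq:regular-gram-second-derivative}.

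For the final sentence, let \(\lambda\) be an optimal weighting restricted to its positive support \(A'\). By \Cref{lem:positive-support-reduction}(i), \(M|A'\) has rank \(r\), so the active columns span \(\R^r\). Since every active weight is positive, \(H(\lambda)=\sum_{e\in A'}\lambda_ea_ea_e^{\top}\) is positive definite. The formulas above therefore apply, and \Cref{lem:positive-support-reduction}(iv) gives \(\partial_eB_M=B_M\) on \(A'\). Together with \eqref{eq:regular-gram-first-derivative}, this says \(\langle a_e,a_e\rangle_H=1\) for every active column.

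No step is a real obstacle. The only points needing care are, first, that the rows of a totally unimodular representation may be taken to be exactly \(r\) rows, so that bases correspond to nonsingular \(r\times r\) minors. This is the hypothesis "full row rank". Second, positive definiteness at the optimum must be justified through the rank of the support, which is what \Cref{lem:positive-support-reduction}(i) provides.
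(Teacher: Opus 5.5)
Your proof is correct and follows essentially the same route as the paper. Both arguments use Cauchy--Binet with total unimodularity for \(B_M=\det H\), the rank-one and rank-two determinant lemmas for the two derivative identities (the paper phrases the first as Jacobi's formula), and \Cref{lem:positive-support-reduction} to obtain positive definiteness and \(\partial_eB_M=B_M\) on the support, where that lemma's hypothesis \(B_M(\lambda)>0\) holds automatically at a maximiser because \(M\) has a basis.
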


\begin{proof}
Cauchy--Binet expands the Gram determinant as
\[
 \det H(\lambda)
 =\sum_{|S|=r}\det(A_S)^2\prod_{e\in S}\lambda_e.
\]
Total unimodularity makes each squared maximal minor equal to one when \(S\)
is a basis of \(M\), and to zero otherwise.  The expansion therefore equals
\(B_M(\lambda)\).  Jacobi's formula gives
\eqref{eq:regular-gram-first-derivative}; a second application, equivalently
the rank-two determinant lemma, gives
\eqref{eq:regular-gram-second-derivative}.

On the positive support, \(H(\lambda)\) is positive definite and
\Cref{lem:positive-support-reduction} gives \(\partial_eB_M=B_M\) for every
active \(e\).  Equation \eqref{eq:regular-gram-first-derivative} gives
\(\lVert a_e\rVert_H=1\) for every active \(e\).
\end{proof}

Parallel elements introduce no additional optimisation.  Replacing one
parallel element by another preserves a basis, so only their total weight
affects the basis polynomial.

\begin{lemma}
\label{lem:parallel-merging}
If \(e,f\) are parallel nonloops, then \(B_M\) depends on their weights only
through \(\lambda_e+\lambda_f\).  Consequently merging a parallel class and
summing its weights preserves both \(B_M\) and the total weight.
\end{lemma}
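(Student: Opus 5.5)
The plan is to pair up the bases of \(M\) containing \(e\) with those containing \(f\) by a weight-preserving bijection, and then write \(B_M\) in terms of \(\lambda_e+\lambda_f\).

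First I classify the bases according to how they meet \(\{e,f\}\). No basis contains both \(e\) and \(f\), because \(\{e,f\}\) is a circuit. So \(\mathcal B(M)\) splits into three disjoint families: bases avoiding both elements, bases containing \(e\) but not \(f\), and bases containing \(f\) but not \(e\). I then check that \(B\mapsto (B\setminus\{e\})\cup\{f\}\) maps the second family bijectively onto the third, with inverse the reverse swap. Let \(B\) be a basis with \(e\in B\) and \(f\notin B\), and put \(B'=(B\setminus\{e\})\cup\{f\}\). Since \(e\) and \(f\) are parallel, they have the same closure, so \(\operatorname{cl}(B')=\operatorname{cl}(B)\), which is all of \(E\). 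Also \(|B'|=r\). Hence \(B'\) is spanning of size \(r\), and therefore a basis. Alternatively, \(B'\) is independent: a circuit inside \(B'\) would have to use \(f\), and the circuit elimination axiom applied to it and \(\{e,f\}\) would then produce a circuit inside \(B\).

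Next I write out the polynomial. Let \(\mathcal B_e\) be the second family, and let \(\mathcal B_0\) be the family of bases avoiding both elements. Then
\[
 B_M(\lambda)=\sum_{B\in\mathcal B_0}\prod_{g\in B}\lambda_g
 +(\lambda_e+\lambda_f)\sum_{B\in\mathcal B_e}\prod_{g\in B\setminus\{e\}}\lambda_g .
\]
The bijection is what justifies collecting the two middle families into one term with coefficient \(\lambda_e+\lambda_f\). Neither remaining sum involves \(\lambda_e\) or \(\lambda_f\). This proves the first claim.

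For the consequence, consider a parallel class \(P\). I replace it by a single representative \(p\) (delete \(P\setminus\{p\}\)) and assign \(p\) the weight \(\sum_{g\in P}\lambda_g\). Applying the identity above repeatedly to pairs in \(P\) shows that \(B_M\) depends on the weights in \(P\) only through their sum. That sum is exactly the weight of \(p\) in the merged matroid. The merged matroid has the same rank, and its bases are precisely the bases of \(M\) containing at most one element of \(P\), with that element relabelled \(p\). So the two basis polynomials agree under the substitution, and the total weight is unchanged. The only step that needs care is the exchange argument in the first paragraph, and it is routine.
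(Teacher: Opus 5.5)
Your proposal is correct and follows the same route as the paper: split the bases by how they meet \(\{e,f\}\), use the swap \(e\leftrightarrow f\) as a bijection between the two families containing exactly one of them, and factor out \(\lambda_e+\lambda_f\). The additional details you supply, namely the verification that the swapped set is a basis and the extension from a pair to a whole parallel class, are sound and only fill in steps the paper leaves implicit.
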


\begin{proof}
No basis contains both \(e\) and \(f\), and replacing one by the other gives a
bijection between the two families of bases that contain exactly one of them.
Pairing the corresponding monomials factors out
\(\lambda_e+\lambda_f\), so neither the basis mass nor the total weight
changes when the pair is merged.
\end{proof}
\section{Direct sums}
\label{sec:regular-direct-sums}

Direct sums constitute the simplest of Seymour's sum operations.  Their basis
polynomials factor, and homogeneity determines how the available mass is
divided between the summands.  The argument works on the closed simplex, so
it also covers rank-zero summands.  We use this identity in the cographic
reductions and in the regular and \(F_7\)-minor-free cases.

\begin{lemma}
\label{lem:one-sum-scaling}
Let \(M=M_1\oplus M_2\), where \(M_i\) has rank \(r_i\geq0\).  Then
\[
 B_M^\star=B_{M_1}^\star B_{M_2}^\star.
\]
\par\goodbreak
In particular, a rank-zero loop summand does not change the optimum.  If
\(r_1,r_2>0\) and
\(B_{M_i}^\star\leq2^{r_i}/(r_i+1)\) for \(i=1,2\), then, with
\(r=r_1+r_2\),
\[
 B_M^\star
 \leq \frac{2^r}{(r_1+1)(r_2+1)}
 <\frac{2^r}{r+1}.
\]
\end{lemma}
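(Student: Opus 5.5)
The plan is to use the factorisation of the basis polynomial of a direct sum, and then optimise how the total mass is divided between the two summands. Write \(E=E_1\sqcup E_2\). Bases of \(M\) are exactly the unions \(B_1\cup B_2\) with \(B_i\in\mathcal B(M_i)\), so
\[
 B_M(\lambda)=B_{M_1}(\lambda_{E_1})\,B_{M_2}(\lambda_{E_2}).
\]
Each \(B_{M_i}\) is homogeneous of degree \(r_i\). If \(r_i=0\), then \(B_{M_i}\equiv1\), its only basis being the empty set, and \(B_{M_i}^\star=1\).

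For the upper bound, fix a feasible \(\lambda\) and set \(s_i=\sum_{E_i}\lambda_e\), so \(s_1+s_2=r\). Suppose \(r_i>0\). If \(s_i>0\), homogeneity gives
\[
 B_{M_i}(\lambda_{E_i})=(s_i/r_i)^{r_i}\,B_{M_i}\bigl((r_i/s_i)\lambda_{E_i}\bigr)\le (s_i/r_i)^{r_i}B_{M_i}^\star .
\]
If \(s_i=0\), the value is \(0\). If \(r_i=0\), the factor is \(1\) for every \(s_i\). Multiplying, we get
\[
 B_M(\lambda)\le (s_1/r_1)^{r_1}(s_2/r_2)^{r_2}\,B_{M_1}^\star B_{M_2}^\star ,
\]
with the convention that the weight factor is \(1\) for a rank-zero summand. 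By weighted AM--GM, \(\prod (s_i/r_i)^{r_i}\le\bigl(\sum s_i/\sum r_i\bigr)^{r}=1\) when both ranks are positive. When one rank is zero the bound is \(\le1\) because \(s_i\le r=r_i\). Hence \(B_M^\star\le B_{M_1}^\star B_{M_2}^\star\).

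For the reverse inequality, take maximisers \(\mu_i\) of the two summands on their own rank simplices, using any weighting of total \(0\) when \(r_i=0\). Concatenate them. The result has total mass \(r_1+r_2=r\) and value \(B_{M_1}^\star B_{M_2}^\star\), which gives equality. The remark about a loop summand follows because such a summand has rank \(0\) and optimum \(1\).

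For the final estimate, substitute the hypotheses into the product formula to get \(B_M^\star\le 2^{r}/((r_1+1)(r_2+1))\). The strict inequality \((r_1+1)(r_2+1)=r+1+r_1r_2>r+1\) holds since \(r_1r_2\ge1\).

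The only delicate points are the degenerate cases, \(s_i=0\) and rank-zero summands, in the homogeneity rescaling. This is why I work on the closed simplex and treat those cases separately. Otherwise the argument is routine.
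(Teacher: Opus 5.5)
Your proposal is correct and follows essentially the same route as the paper. Both arguments factor the basis polynomial, rescale each side by homogeneity, use weighted AM--GM to show the scale factors multiply to at most one, concatenate maximisers for the reverse inequality, and handle zero-mass sides and rank-zero summands separately.
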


\begin{proof}
The basis polynomial factors:
\[
 B_M(\lambda)=B_{M_1}(\lambda^{(1)})
                 B_{M_2}(\lambda^{(2)}).
\]
Suppose first that \(r_1,r_2>0\).  A maximiser for each factor, placed on the
two disjoint ground sets, gives a feasible weighting for \(M\).  Hence
\(B_M^\star\geq B_{M_1}^\star B_{M_2}^\star\).

The reverse inequality asks whether a different division of the total mass
can increase the product.  Let \(\lambda\) be feasible and put
\(s_i=\sum_{e\in E(M_i)}\lambda_e\), so \(s_1+s_2=r\).  If either \(s_1\)
or \(s_2\) is zero, then \(B_M(\lambda)=0\).  Otherwise homogeneity gives
\[
 B_{M_i}(\lambda^{(i)})
 =\left(\frac{s_i}{r_i}\right)^{r_i}
   B_{M_i}\!\left(\frac{r_i}{s_i}\lambda^{(i)}\right)
 \leq \left(\frac{s_i}{r_i}\right)^{r_i}B_{M_i}^\star.
\]
The product of the remaining scale factors cannot exceed one: the weighted
arithmetic--geometric mean inequality yields
\[
 \left[
 \left(\frac{s_1}{r_1}\right)^{r_1}
 \left(\frac{s_2}{r_2}\right)^{r_2}
 \right]^{1/r}
 \leq \frac{r_1}{r}\frac{s_1}{r_1}
     +\frac{r_2}{r}\frac{s_2}{r_2}=1.
\]
Thus \(B_M(\lambda)\leq B_{M_1}^\star B_{M_2}^\star\), proving the reverse
inequality.

If instead \(r_1=0<r_2\), then \(M_1\) consists only of loops and
\(B_{M_1}=1\).  For any feasible \(\lambda\), the mass \(s_2\) on the second
factor is at most \(r_2\).  If \(s_2=0\), the desired bound holds.  If
\(s_2>0\), homogeneity gives
\[
 B_M(\lambda)=B_{M_2}(\lambda^{(2)})
 \leq \left(\frac{s_2}{r_2}\right)^{r_2}B_{M_2}^\star
 \leq B_{M_2}^\star.
\]
Putting zero weight on the loop summand attains equality.  If both ranks are
zero, both sides of the asserted identity are \(1\).

\enlargethispage{\baselineskip}
When both ranks are positive, the assumed bounds give the strict inequality
in the lemma, since
\((r_1+1)(r_2+1)=r+1+r_1r_2>r+1\).
\end{proof}
\section{Graphic matroids and \texorpdfstring{\(R_{10}\)}{R10}}
\label{sec:regular-base-pieces}

We first prove the inequality for graphic matroids and for the ten-element
matroid \(R_{10}\).  In the graphic case, the weighted matrix-tree theorem
expresses the basis mass in terms of the Laplacian eigenvalues, whose product
is bounded by the arithmetic--geometric mean inequality.  For \(R_{10}\), we
use the signless incidence matrix of \(K_5\), together with Hadamard's inequality
and the arithmetic--geometric mean inequality.

\subsection{Graphic matroids}

\begin{theorem}
\label{prop:graphic-bound}
Let \(G\) be a connected multigraph on \(r+1\) vertices.  For nonnegative
edge weights satisfying \(\sum_e\lambda_e=r\),
\[
 B_{M(G)}(\lambda)=\sum_{T\text{ spanning tree}}\prod_{e\in T}\lambda_e
 \le\frac{2^r}{r+1}.
\]
Equality holds if and only if every loop has zero weight and, after merging
parallel classes, the edges of positive weight form \(K_{r+1}\), each with
weight \(2/(r+1)\).
\end{theorem}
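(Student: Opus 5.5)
The plan follows the paper's hint: apply the weighted matrix-tree theorem and then the arithmetic--geometric mean inequality to the Laplacian spectrum. Loops lie in no spanning tree, so by \Cref{lem:parallel-merging} we may merge parallel classes and move all loop weight onto nonloop edges. Moving loop weight can only increase the value, since scaling up nonloop weights multiplies \(B_{M(G)}\) by a factor at least one. This reduces us to a simple graph \(G\) on \(n=r+1\) vertices with weighted Laplacian
\[
 L(\lambda)=\sum_{uv\in E}\lambda_{uv}(e_u-e_v)(e_u-e_v)^{\top}.
\]
By the weighted matrix-tree theorem (equivalently, \Cref{lem:regular-gram-calculus} applied to the reduced incidence matrix, which is totally unimodular), the tree sum equals \(\frac1n\prod_{i=1}^{n-1}\mu_i\), where \(\mu_1,\dots,\mu_{n-1}\) are the eigenvalues of \(L(\lambda)\) on \(\mathbf 1^{\perp}\).

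The key computation is the trace:
\[
 \sum_i\mu_i=\tr L(\lambda)=\sum_{uv}\lambda_{uv}\,\lVert e_u-e_v\rVert^2=2\sum_e\lambda_e=2r.
\]
By the arithmetic--geometric mean inequality, \(\prod_{i=1}^{r}\mu_i\le (2r/r)^r=2^r\). Hence \(B_{M(G)}(\lambda)\le 2^r/(r+1)\). If \(G\) is disconnected or some \(\mu_i=0\), the value is \(0\) and the bound is trivial.

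For equality, every loop must have zero weight; otherwise the rescaling step gives a strict increase. In addition, all \(\mu_i\) must equal \(2\), so \(L(\lambda)=2\bigl(I-\tfrac1n J\bigr)\). Reading off the off-diagonal entries, each pair \(u\ne v\) has merged weight \(-L_{uv}=2/n=2/(r+1)\). So every pair of vertices is joined by positive total weight \(2/(r+1)\), meaning the positive-weight edges form \(K_{r+1}\) after merging. Conversely, these weights give \(L=2(I-J/n)\) with total weight \(\binom{r+1}{2}\cdot\frac{2}{r+1}=r\), and hence equality.

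The main obstacle is bookkeeping rather than analysis. We must state the matrix-tree normalisation correctly (the factor \(1/n\), with the product over the nonzero eigenvalues), and we must treat loops and parallel edges so that the equality characterisation refers to merged classes. The spectral step itself is routine.
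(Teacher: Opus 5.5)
Your proposal is correct and is essentially the paper's proof: weighted matrix-tree theorem, the trace of the Laplacian on \(\one^\perp\), the arithmetic--geometric mean inequality, and reading the equality case off \(L=2\bigl(I-J/(r+1)\bigr)\). The one difference is that your loop-rescaling and parallel-merging steps are not needed: the paper notes that loops contribute nothing to \(L\), so \(\tr L\le 2r\) and equality forces zero loop weight, while the off-diagonal entries of \(L\) already give the total weight of each parallel class.
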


\begin{proof}
If \(r=0\), both sides equal \(1\).  Assume \(r\geq1\), and let
\[
 L=\sum_{e=ij}\lambda_e(e_i-e_j)(e_i-e_j)^{\top}
\]
be the weighted Laplacian (loops contribute zero), and let
\(\nu_1,\ldots,\nu_r\) be its eigenvalues on \(\one^\perp\), zeros included.
The weighted matrix-tree theorem \citep{Chaiken1982} gives
\[
 B_{M(G)}(\lambda)=\frac1{r+1}\prod_{k=1}^r\nu_k.
\]
Moreover
\[
 \sum_{k=1}^r\nu_k=\tr L
 =2\sum_{e\text{ nonloop}}\lambda_e\le2r.
\]
Arithmetic--geometric mean therefore gives
\[
 B_{M(G)}(\lambda)
 \le\frac1{r+1}\left(\frac{2r}{r}\right)^r
 =\frac{2^r}{r+1}.
\]
If equality holds, the loops have total weight zero and
\(\nu_1=\cdots=\nu_r=2\).  Hence
\(L=2(I-\one\one^{\top}/(r+1))\).  Reading the off-diagonal entries
shows that the total weight between every vertex pair is \(2/(r+1)\).
Conversely, these weights attain equality.
\end{proof}

\subsection{The matroid \texorpdfstring{\(R_{10}\)}{R10}}
\begin{samepage}

We represent \(R_{10}\) by the signless incidence matrix of \(K_5\).  For the
associated weighted Gram matrix, the quadratic form in the all-ones direction
is fixed, so the determinant estimate reduces to its four-dimensional
orthogonal complement.  We first verify the representation and then carry
out this estimate.

\end{samepage}
\begin{proposition}
\label{prop:R10-bound}
For the rank-five regular matroid \(R_{10}\),
\[
 B_{R_{10}}^\star=\frac{81}{16}<\frac{16}{3}=\frac{2^5}{6}.
\]
The unique maximiser assigns weight \(1/2\) to every element.
\end{proposition}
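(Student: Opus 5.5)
We will work with the representation of \(R_{10}\) by the signless incidence matrix of \(K_5\): for each edge \(e=ij\) of \(K_5\) put \(a_e=e_i+e_j\in\R^5\). First we check that the ten columns \(a_e\) form a totally unimodular representation of \(R_{10}\). Over \(\mathbb F_2\), signless and signed incidence agree, and the binary matroid of these vectors is \(R_{10}\); this is the standard description of \(R_{10}\) as the ten vectors of weight two in a five-dimensional space. Over \(\R\), a five-subset \(S\) of edges gives a square matrix whose determinant is \(\pm2\) exactly when each component of the spanning subgraph \(K_5|S\) is unicyclic with an odd cycle. It is \(0\) otherwise. An odd cycle plus a tree gives determinant \(\pm2\). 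So \(\det(A_S)^2\in\{0,4\}\), and \(\det(A_S)^2=4\) precisely on the bases of \(R_{10}\). The matrix is therefore not literally unimodular, but Cauchy--Binet still gives the scaled identity
\[
 \det H(\lambda)=4\,B_{R_{10}}(\lambda),\qquad H(\lambda)=\sum_e\lambda_e a_ea_e^{\top}.
\]
This rescaling is harmless: we need only the determinant identity and not \Cref{lem:regular-gram-calculus} itself. The identity also holds over \(\R\) because the \(\R\)-matroid and the \(\mathbb F_2\)-matroid of these columns coincide: both are characterised by "each component contains an odd cycle".

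Next we bound \(\det H\). Write \(H=D+W\), where \(W\) is the weighted adjacency matrix and \(D\) is the diagonal of weighted degrees \(d_i=\sum_{j}\lambda_{ij}\). Then \(\one^\top H\one=\sum_e\lambda_e(\one^\top a_e)^2=4\sum_e\lambda_e=20\), which is the fixed quadratic form in the all-ones direction. Also \(\tr H=\sum_e\lambda_e\lVert a_e\rVert^2=2\cdot5=10\). Let \(u=\one/\sqrt5\), so \(u^\top Hu=4\), and complete \(u\) to an orthonormal basis. Fischer's inequality (the block form of Hadamard's inequality) gives
\[
 \det H\le (u^\top Hu)\cdot\det\bigl(P H P|_{\one^\perp}\bigr),
\]
with \(P\) the projection onto \(\one^\perp\). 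The compressed block has trace \(10-4=6\) on a four-dimensional space. The arithmetic--geometric mean inequality bounds its determinant by \((6/4)^4=81/16\). Hence
\[
 \det H\le 4\cdot\frac{81}{16},\qquad B_{R_{10}}(\lambda)\le\frac{81}{16}.
\]
For attainment, take \(\lambda_e=1/2\). Then \(H=\tfrac12(3I+ (J-I))=I+\tfrac12J\), whose eigenvalues are \(7/2\) on \(\one\) and \(1\) four times. Its determinant is \(7/2\), giving \(B=7/8\). This contradicts the bound computed above.

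Because of this contradiction, the exact constant must be rechecked before anything is written down. It is the step I expect to be the main obstacle, and either the direction or the constant is off. I would first recompute \(\one^\top a_e=2\), which gives \(\one^\top H\one=4\cdot5=20\) and \(u^\top Hu=4\). With weights \(1/2\), however, \(u^\top Hu=7/2\). Redoing the computation: \(H=\tfrac12\sum_e a_ea_e^\top\) has diagonal entries \(\tfrac12\cdot4=2\) and off-diagonal entries \(\tfrac12\), so \(H=\tfrac32I+\tfrac12J\). Its eigenvalues are \(4\) and \(\tfrac32\) four times, so \(\det H=4\cdot(3/2)^4=81/4\) and \(B=81/16\). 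The earlier slip was in the entries of \(H\), and the bound is consistent.

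For equality, AM--GM forces the compressed block to be \(\tfrac32 I\) on \(\one^\perp\), and Fischer forces \(\one\) to be an eigenvector. Together these give \(H=\tfrac32I+\tfrac12J\), so every off-diagonal entry \(\lambda_{ij}\) equals \(\tfrac12\). This proves uniqueness of the maximiser. Finally, \(81/16<16/3\) since \(243<256\).
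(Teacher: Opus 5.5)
\textbf{The optimisation step is correct; the identification of the matroid is not.} Your optimisation argument is the paper's. You use the same signless incidence matrix of $K_5$, the same fixed value $u^{\top}Hu=4$ with $\tr H=10$, and a block Hadamard (Fischer) bound followed by AM--GM on the four-dimensional complement. Uniqueness also comes the same way, by reading the weights $\lambda_{ij}$ off the forced matrix $H=\tfrac32I+\tfrac12J$. Delete the false start at uniform weights: the diagonal entries of $H$ are the weighted degrees $2$, so $H=\tfrac32I+\tfrac12J$ from the outset. With that removed, this part is correct.

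The gap is earlier, in the claim that turns $\det H=4B_{R_{10}}(\lambda)$ into a statement about $R_{10}$. Your justification is false. Over $\mathbb F_2$, every column $e_i+e_j$ lies in the even-weight hyperplane. So the ten columns have rank four and represent $M(K_5)$, whose independent sets are forests. Indeed, every $5\times5$ minor lies in $\{0,\pm2\}$ and vanishes modulo $2$. Hence the real and binary matroids of these columns do not coincide (rank five versus rank four), and weight-two vectors over $\mathbb F_2$ do not describe $R_{10}$. Your determinant computation does correctly identify the nonsingular five-edge sets over $\R$: connected, with a unique odd cycle. But nothing you wrote shows that this family is the basis family of $R_{10}$.

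\textbf{How to repair it.} The paper closes this gap by pivoting on a five-cycle $C$. Every full minor of $U_C^{-1}U$ equals $\det U_F/\det U_C\in\{0,\pm1\}$, so this matrix is totally unimodular. It is Seymour's standard representation of $R_{10}$ up to signs and permutations.

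A repair closer to your own idea also works, with weight three in place of weight two. The invertible matrix $\tfrac12J-I$ (determinant $\tfrac32$) sends $e_i+e_j$ to the $0/1$ vector $\one-e_i-e_j$. The resulting integer matrix has full minors $\tfrac32\det U_S\in\{0,\pm3\}$, and these vanish over $\R$ exactly when they vanish modulo $2$. Hence the real matroid of $U$ equals the binary matroid of the ten weight-three vectors of $\mathbb F_2^5$, which is the standard binary representation of $R_{10}$. With either argument in place, the rest of your proof goes through unchanged.
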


\begin{proof}
Let \(U\) be the signless incidence matrix of \(K_5\), with columns
\[
 u_{ij}=e_i+e_j
 \qquad(1\leq i<j\leq5).
\]

To identify the represented matroid, consider a five-edge set \(F\).  The
kernel of \(U_F^{\top}\) consists of the
vectors satisfying \(x_i+x_j=0\) on every edge of \(F\).  Thus \(U_F\) is
nonsingular if and only if every component of \(([5],F)\) is nonbipartite.
Since five simple edges on five vertices cannot contain two nonbipartite
components, this means that \(F\) is connected and its unique cycle is odd.
Deleting leaves reduces \(U_F\) to the signless incidence matrix of that odd
cycle, and hence
\[
 |\det U_F|=2.
\]
Choose the five-cycle \(C=(12,23,34,45,15)\).  Ordering the remaining
columns as \((35,14,25,13,24)\), row reduction gives
\[
 U_C^{-1}U=
 \left[
 \begin{array}{ccccc|rrrrr}
 1&0&0&0&0&-1& 1& 0& 0& 1\\
 0&1&0&0&0& 1&-1& 1& 0& 0\\
 0&0&1&0&0& 0& 1&-1& 1& 0\\
 0&0&0&1&0& 0& 0& 1&-1& 1\\
 0&0&0&0&1& 1& 0& 0& 1&-1
 \end{array}
 \right].
\]
For this matrix, every full minor is
\(\det U_F/\det U_C\in\{0,\pm1\}\).  Every square minor of the right-hand
block extends to such a full minor by adjoining complementary identity
columns, so the displayed matrix is totally unimodular.  It is Seymour's
standard representation of \(R_{10}\), up to signs and permutations
\cite[p.~328]{Seymour1980}; hence \(U\) represents \(R_{10}\) over
\(\mathbb R\).  Let
\(
 G=U\diag(\lambda)U^{\top}
\).  Since every nonzero maximal minor of \(U\) has determinant \(\pm2\),
Cauchy--Binet gives
\begin{equation}
 \det G=4B_{R_{10}}(\lambda).
 \label{eq:R10-signless-determinant}
\end{equation}

It remains to bound this determinant under the mass constraint.  Suppose
\(\sum_{ij}\lambda_{ij}=5\), and put
\(v=\one/\sqrt5\).  Every column of \(U\) has squared norm \(2\) and
inner product \(2/\sqrt5\) with \(v\).  Therefore
\[
 \tr G=10,
 \qquad
 v^{\top}Gv=4.
\]
Thus the entry in the all-ones direction is fixed at \(4\), while the
remaining four diagonal entries have sum \(6\).  In an orthonormal basis whose
first vector is \(v\), Hadamard's inequality followed by arithmetic--geometric
mean gives
\[
 \det G
 \leq4\left(\frac{10-4}{4}\right)^4
 =\frac{81}{4}.
\]
By
\eqref{eq:R10-signless-determinant},
\[
 B_{R_{10}}(\lambda)\leq\frac{81}{16}.
\]
The uniform weighting \(\lambda_{ij}=1/2\) attains the bound, since
\[
 G=\frac32I+\frac12J.
\]
The eigenvalues of this matrix are \(4,3/2,3/2,3/2,3/2\), so equality holds.
Conversely, equality in Hadamard and arithmetic--geometric mean forces the
same matrix.  Its off-diagonal entries are the weights \(\lambda_{ij}\), which
proves uniqueness.  Finally, \(81/16<16/3\).
\end{proof}
\section{Cographic matroids}
\label{sec:cographic-reduction}

For cographic matroids, deleting edges while preserving connectivity lowers
the cycle rank.  The key is to choose a deletion whose proportional loss in
basis mass is no greater than the
corresponding loss in the normalising constant.  Before making that choice, we
remove zero-weight edges, bridges, parallel edges, and degree-two vertices.
The resulting maximiser has full support on a simple bridgeless graph, where
a single frozen-weight deletion estimate applies to two moves: delete one
edge, or delete two edges incident with a cubic vertex.

\subsection{The cographic determinant and the planar case}

Let \(G=(V,E)\) be connected, with cycle rank \(r=|E|-|V|+1\).  Its
cographic basis polynomial is
\[
 C_G(\lambda)
   :=\sum_{T\text{ spanning tree of }G}
       \prod_{e\notin T}\lambda_e,
 \qquad \lambda_e\ge0,\qquad \sum_{e\in E}\lambda_e=r.
\]
Write
\[
 c_r:=\frac{2^r}{r+1},
 \qquad
 C_G^\star:=\max_{\lambda}C_G(\lambda),
 \qquad
 \rho(G):=\frac{C_G^\star}{c_r}.
\]

A cographic basis is the complement of a spanning tree.  This gives two
equivalent descriptions of its basis mass: one as a Gram determinant
and the other as a weighted sum over cotrees.  The determinant will supply the
inverse-Gram geometry, while the cotree description connects its minors to
edge deletion and contraction in the underlying graph.

\begin{lemma}
\label{lem:cographic-determinant}
Let \(G\) be connected, of cycle rank \(r\), and let \(Z\) be a totally
unimodular \(r\times |E(G)|\) matrix representing \(M^*(G)\).  Then
\[
 B_{M^*(G)}(\lambda)=\det\!\left(Z\diag(\lambda)Z^{\top}\right)
 =\sum_{T\text{ spanning tree}}\prod_{e\notin T}\lambda_e.
\]
\end{lemma}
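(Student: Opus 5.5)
The two equalities are proved separately: the first is an instance of \Cref{lem:regular-gram-calculus}, and the second is the matroid-duality description of bases of \(M^*(G)\).

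\textbf{First equality.} The cographic matroid \(M^*(G)\) is regular, since it is the dual of a graphic, hence regular, matroid. Because \(G\) is connected, \(M(G)\) has rank \(|V|-1\). The dual therefore has rank \(|E|-|V|+1=r\), so a totally unimodular \(r\times|E|\) representation \(Z\) of \(M^*(G)\) has full row rank. Applying \Cref{lem:regular-gram-calculus} with \(A=Z\) gives
\[
B_{M^*(G)}(\lambda)=\det\!\left(Z\diag(\lambda)Z^{\top}\right).
\]
Unwinding that lemma, this is just Cauchy--Binet: a maximal minor \(\det Z_S\) is \(\pm1\) exactly when \(S\) is a basis of \(M^*(G)\) and \(0\) otherwise, so \(\det(Z_S)^2\) is the indicator of a basis.

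\textbf{Second equality.} A set \(S\subseteq E\) is a basis of \(M^*(G)\) precisely when \(E\setminus S\) is a basis of \(M(G)\). Since \(G\) is connected, the bases of \(M(G)\) are exactly the edge sets of spanning trees of \(G\). Hence \(S\mapsto E\setminus S\) is a bijection from \(\mathcal B(M^*(G))\) to the spanning trees, and reindexing the sum gives
\[
B_{M^*(G)}(\lambda)=\sum_{T}\prod_{e\notin T}\lambda_e .
\]

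\textbf{Remaining points.} Loops of \(G\) lie in no spanning tree, so they appear in every cotree; they are coloops of \(M^*(G)\) and are counted correctly. Parallel edges need no separate treatment, because the bijection is between edge sets, not vertex pairs. The argument has no real obstacle. The only point needing care is that the representation has exactly \(r\) rows, which is what makes the Gram determinant \(r\times r\) and lets Cauchy--Binet run over \(r\)-subsets.
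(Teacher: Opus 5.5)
Your proposal is correct and follows the paper's proof: the first identity is an application of \Cref{lem:regular-gram-calculus} (Cauchy--Binet combined with total unimodularity), and the second reindexes the bases of \(M^*(G)\) as complements of spanning trees. Your extra checks on the row count, graph loops, and parallel edges are accurate and only spell out details the paper leaves implicit.
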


\begin{proof}
The first identity follows from \Cref{lem:regular-gram-calculus}.  A basis of
\(M^*(G)\) is the complement of a spanning tree, so summing over spanning
trees gives the second identity.
\end{proof}

\begin{corollary}
\label{cor:planar-cographic-exit}
If \(G\) is planar, then
\[
 B_{M^*(G)}^\star\le\frac{2^r}{r+1}.
\]
\end{corollary}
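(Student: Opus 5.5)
If \(G\) is planar, its cographic matroid is graphic: for a plane embedding of \(G\) with geometric dual \(G^*\), we have \(M^*(G)=M(G^*)\). I would reduce the statement to \Cref{prop:graphic-bound} applied to \(G^*\).

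\emph{Step 1: the dual graph.} Fix a plane embedding of the connected graph \(G\) and let \(G^*\) be its geometric dual, a connected multigraph possibly with loops and parallel edges. Edges of \(G\) correspond bijectively to edges of \(G^*\). A set \(T\) is a spanning tree of \(G\) exactly when the dual edges of \(E\setminus T\) form a spanning tree of \(G^*\). By Euler's formula, \(G^*\) has \(|E|-|V|+2=r+1\) vertices. Its spanning trees therefore have \(r\) edges, matching the rank of \(M^*(G)\).

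\emph{Step 2: identify the polynomials.} Transport the weights along the edge bijection. By \Cref{lem:cographic-determinant},
\[
 B_{M^*(G)}(\lambda)=\sum_{T}\prod_{e\notin T}\lambda_e
 =\sum_{T^*\text{ spanning tree of }G^*}\prod_{e\in T^*}\lambda_e
 =B_{M(G^*)}(\lambda).
\]
The mass constraint \(\sum_e\lambda_e=r\) is the same on both sides. Hence \(B_{M^*(G)}^\star=B_{M(G^*)}^\star\).

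\emph{Step 3: apply the graphic bound.} Since \(G^*\) is a connected multigraph on \(r+1\) vertices, \Cref{prop:graphic-bound} gives \(B_{M(G^*)}(\lambda)\le 2^r/(r+1)\) for every feasible \(\lambda\). Taking the maximum gives the corollary.

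The degenerate case \(r=0\) (\(G\) a tree, \(G^*\) a single vertex) gives \(1\le1\). It is covered by the same theorem, or can be checked directly.

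The only point needing care is Step 1, specifically that loops and bridges correspond correctly. A bridge of \(G\) lies in every spanning tree, so it is a loop of \(M^*(G)\) and a loop of \(G^*\). \Cref{prop:graphic-bound} explicitly allows multigraphs with loops, so no issue arises. The duality statement itself is the standard fact \(M^*(G)=M(G^*)\) for plane graphs \cite{Oxley2011}.
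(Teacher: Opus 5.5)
Your proposal is correct and is the paper's argument: pass to a planar dual \(G^*\), use \(M^*(G)=M(G^*)\), and apply \Cref{prop:graphic-bound}. Your extra checks spell out details the paper leaves implicit: Euler's formula giving \(r+1\) vertices, connectedness of \(G^*\), bridges becoming loops, and the case \(r=0\).
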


\begin{proof}
Choose a planar dual \(G^*\).  Then \(M^*(G)=M(G^*)\), and
\Cref{prop:graphic-bound} gives the result.
\end{proof}

The planar case is now complete.  For the nonplanar case we shall use the
following lower bound on the cycle rank.

\begin{lemma}
\label{lem:nonplanar-cycle-rank}
Every connected nonplanar finite multigraph has cycle rank at least four.
\end{lemma}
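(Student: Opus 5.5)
We prove \Cref{lem:nonplanar-cycle-rank} by invoking Kuratowski's theorem and then using the fact that cycle rank cannot increase when passing to a subgraph or to a contraction.

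Let \(G\) be a connected nonplanar multigraph. Loops and parallel edges do not affect planarity, so deleting them leaves a connected nonplanar simple graph \(G'\) whose cycle rank is at most that of \(G\). Indeed, deleting a non-bridge edge lowers the cycle rank by exactly one.

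By Kuratowski's theorem, \(G'\) contains a subdivision \(K\) of \(K_5\) or \(K_{3,3}\). Two monotonicity facts for the cycle rank \(|E|-|V|+c\) now give a chain of inequalities:
\begin{itemize}
\item The cycle rank of a subgraph is at most that of the whole graph, since cycle rank is the nullity of the graphic matroid and restriction does not increase nullity. Hence \(\beta(K)\le\beta(G')\le\beta(G)\).
\item Subdividing an edge adds one vertex and one edge, so it preserves cycle rank. Hence \(\beta(K)\) equals the cycle rank of \(K_5\) or of \(K_{3,3}\).
\end{itemize}

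These values are
\[
\beta(K_5)=10-5+1=6,
\qquad
\beta(K_{3,3})=9-6+1=4.
\]
Both are at least four, which proves the lemma.

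The only step that needs care is the monotonicity of cycle rank under taking subgraphs. This step does not depend on whether the subgraph is connected. Writing the cycle rank as \(|E|-r(M(H))\), adding an edge raises \(|E|\) by one and raises the matroid rank by at most one, so the cycle rank cannot drop; adding an isolated vertex changes neither \(|E|\) nor the rank. With this fact in hand, the rest of the argument is routine.
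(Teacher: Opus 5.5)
Your proof is correct, but it takes a different route from the paper's.

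The paper does not use Kuratowski's theorem. It deletes loops, merges parallel edges, removes vertices of degree at most one, and suppresses degree-two vertices, simplifying again after each suppression. It notes that these operations preserve planarity in both directions and never increase the cycle rank. This yields a simple nonplanar graph of minimum degree at least three. The handshake count $3n\le 2m=2(n+s-1)$ then gives $n\le 2s-2\le 4$ whenever $s\le 3$. That contradicts the planarity of every simple graph on at most four vertices, since each is a subgraph of $K_4$.

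You instead use the hard direction of Kuratowski's theorem. You combine it with two facts: the nullity $|E|-r(M(H))$ does not increase on passing to a subgraph, and subdivision leaves the cycle rank unchanged.

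The paper's argument is self-contained and elementary: it needs only that $K_4$ is planar, plus degree bookkeeping. Yours is shorter and shows the bound is sharp, since $K_{3,3}$ has cycle rank exactly four.

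One minor point: your preliminary deletion of loops and parallel edges is harmless but unnecessary. A Kuratowski subdivision in the underlying simple graph is already a subgraph of $G$, and subgraph monotonicity handles it directly.
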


\begin{proof}
Delete graph loops, merge parallel graph edges, remove vertices of degree at
most one, and suppress vertices of degree two, simplifying again after each
suppression.  Planarity is preserved in both directions, and cycle rank does
not increase.  The resulting graph is therefore nonplanar and simple, with
minimum degree at least three.  If it has \(n\) vertices, \(m\) edges, and
cycle rank \(s\), then
\[
 3n\leq2m=2(n+s-1).
\]
If \(s\leq3\), this gives \(n\leq4\), but every simple graph on at most four
vertices is planar.  Hence \(s\geq4\), and the original graph has cycle rank
at least four as well.
\end{proof}

\subsection{Support contraction and graph simplification}

A maximiser need not have full support.  If its value is positive, however,
every zero-weight edge belongs to each spanning tree that contributes a
nonzero monomial.  Contracting these edges therefore produces an interior
maximiser on the contracted graph.

\begin{proposition}
\label{prop:cographic-support-face}
Let \(\lambda\) be feasible and suppose \(C_G(\lambda)>0\).  Put
\(Z:=\{e:\lambda_e=0\}\) and \(S:=E\setminus Z\).  Then \(Z\) is a forest and
\[
 C_G(\lambda)=C_{G/Z}(\lambda|_S).
\]
The contraction preserves cycle rank.  Consequently, if \(\lambda\) is a
maximiser, then \(\lambda|_S\) is an interior maximiser for \(G/Z\).
\end{proposition}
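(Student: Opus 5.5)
The plan is to work directly with the cotree expansion \(C_G(\lambda)=\sum_T\prod_{e\notin T}\lambda_e\) from \Cref{lem:cographic-determinant}, and to read off the structure of \(Z\) from which monomials survive. First I show that \(Z\) is a forest. Since \(C_G(\lambda)>0\), some spanning tree \(T_0\) has \(\prod_{e\notin T_0}\lambda_e>0\). Every edge outside \(T_0\) then has positive weight, so \(Z\subseteq T_0\), and a subset of a tree is a forest. The same argument applied to an arbitrary spanning tree \(T\) gives the key observation: the monomial of \(T\) is nonzero only if \(Z\subseteq T\). Hence
\[
 C_G(\lambda)=\sum_{T\supseteq Z}\ \prod_{e\in E\setminus T}\lambda_e .
\]

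Next I identify this sum with \(C_{G/Z}(\lambda|_S)\). Because \(Z\) is a forest, contracting it introduces no loops from \(Z\) itself, and \(G/Z\) is connected. The standard bijection \(T\mapsto T\setminus Z\) sends spanning trees of \(G\) containing \(Z\) onto spanning trees of \(G/Z\); this is the usual matroid fact that bases of \(M(G)/Z\) are the sets \(B\setminus Z\) for bases \(B\supseteq Z\) when \(Z\) is independent. Under this bijection the complement \(E\setminus T\) equals \(S\setminus(T\setminus Z)\), so the monomials agree term by term. For cycle rank, contracting a forest with \(|Z|\) edges removes \(|Z|\) edges and \(|Z|\) vertices, so \(|E|-|V|+1\) is unchanged. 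The feasibility constraint also transfers: \(\sum_{e\in S}\lambda_e=r\) because the weights on \(Z\) vanish.

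For the maximiser statement, I take any feasible weighting \(\mu\) on \(G/Z\), which has cycle rank \(r\), and extend it by zero on \(Z\). This gives a feasible weighting \(\tilde\mu\) on \(G\). Applying the same expansion, \(C_G(\tilde\mu)\) is at least the sum over trees containing \(Z\), which equals \(C_{G/Z}(\mu)\). In fact it is exactly equal: any tree missing an edge of \(Z\) carries a zero factor from that edge. Therefore \(C_{G/Z}(\mu)=C_G(\tilde\mu)\le C_G(\lambda)=C_{G/Z}(\lambda|_S)\), so \(\lambda|_S\) is a maximiser, and it is interior because every coordinate on \(S\) is positive.

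The only delicate point is the bijection between spanning trees of \(G\) containing \(Z\) and spanning trees of \(G/Z\) in the multigraph setting. Contraction can create parallel edges, and it turns into loops those edges of \(S\) whose ends are joined by a path in \(Z\). I will handle this by invoking the matroid identity \(M(G/Z)=M(G)/Z\) for independent \(Z\), rather than arguing with vertices. Everything else is bookkeeping.
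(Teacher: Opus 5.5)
Your proposal is correct and follows essentially the same route as the paper: every contributing cotree monomial forces \(Z\subseteq T\), so \(Z\) is a forest. The paper then uses the same three steps you do. The correspondence \(T\mapsto T\setminus Z\) with spanning trees of \(G/Z\) matches the monomials, contracting a forest preserves cycle rank, and extending by zero on \(Z\) shows that \(\lambda|_S\) is an interior maximiser. Your appeal to \(M(G/Z)=M(G)/Z\) for independent \(Z\) just makes explicit the bijection the paper states without proof.
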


\begin{proof}
The monomial indexed by a spanning tree \(T\) is nonzero only if \(T\)
contains every zero-weight edge.  Since \(C_G(\lambda)>0\), some spanning tree
contains \(Z\), and hence \(Z\) is a forest.  Moreover, the spanning trees of
\(G\) that contain \(Z\) correspond bijectively to the spanning trees of
\(G/Z\), with the same complementary edge sets in \(S\).  The polynomial
identity follows from this correspondence.

Contracting a forest removes the same number of vertices and edges, so it
does not change the cycle rank.  If \(\lambda\) is a maximiser, its restriction
must also maximise \(C_{G/Z}\) on its rank simplex: any improvement there could
be extended by zero on \(Z\), contradicting the choice of \(\lambda\).
\end{proof}

After the zero set is contracted, the maximiser has full support.  Three
further reductions produce smaller instances of the theorem.  An interior
maximiser cannot occur on a graph with a bridge; suppressing a degree-two
vertex preserves the optimum; and merging graph-parallel edges lowers the
rank.  Together with direct-sum reduction and planar duality, these reductions
leave a connected, simple, bridgeless, nonplanar graph of minimum degree at
least three.

\begin{lemma}
\label{lem:cographic-active-support-bridgeless}
Let \(r\ge1\), and let \(\lambda\) be an interior maximiser for a connected
graph \(H\) with \(C_H(\lambda)>0\).  Then \(H\) has no bridge.
\end{lemma}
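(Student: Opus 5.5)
We argue by contradiction using the interior KKT condition of \Cref{lem:positive-support-reduction}(iv).

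Suppose \(b\) is a bridge of \(H\). Then \(b\) lies in every spanning tree of \(H\). Hence \(b\) never appears in a cotree, and \(\lambda_b\) does not occur in any monomial of
\[
 C_H(\lambda)=\sum_{T}\prod_{e\notin T}\lambda_e .
\]
Equivalently, \(b\) is a loop of the cographic matroid \(M^*(H)\). It follows that \(\partial_bC_H\equiv0\).

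On the other hand, \(\lambda\) is an interior maximiser with \(C_H(\lambda)>0\). So \(b\) belongs to the positive support \(A=E(H)\), and \Cref{lem:positive-support-reduction}(iv), applied to \(M=M^*(H)\) of rank \(r\ge1\), gives
\[
 \partial_bC_H(\lambda)=C_H(\lambda)>0.
\]
This contradicts \(\partial_bC_H\equiv0\).

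The same conclusion can be seen by a direct mass-shifting argument, which avoids invoking the KKT statement. Since \(\lambda_b>0\), move the weight \(\lambda_b\) from \(b\) onto the other edges by scaling:
\[
 \lambda'_e=\frac{r}{r-\lambda_b}\,\lambda_e \quad (e\ne b), \qquad \lambda'_b=0 .
\]
Here \(r-\lambda_b>0\), because the remaining edges carry positive mass; indeed, some cotree with nonzero monomial avoids \(b\) and has \(r\ge1\) edges. Since \(C_H\) is homogeneous of degree \(r\) and independent of \(\lambda_b\),
\[
 C_H(\lambda')=\left(\frac{r}{r-\lambda_b}\right)^{r}C_H(\lambda)>C_H(\lambda),
\]
which contradicts maximality.

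The only point needing care is checking that \(r-\lambda_b>0\) and that \(C_H\) is genuinely independent of \(\lambda_b\). Both follow from the fact that a bridge lies in every spanning tree.
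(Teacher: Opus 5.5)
Your proof is correct, and your first argument is the paper's own: a bridge lies in every spanning tree, so \(\partial_bC_H\equiv0\), which contradicts the interior KKT identity \(\partial_bC_H(\lambda)=C_H(\lambda)>0\). Your supplementary rescaling argument is also valid; it simply proves that KKT step directly for this case, in the same way the loop-summand case of \Cref{lem:one-sum-scaling} is handled.
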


\begin{proof}
Suppose that \(H\) has a bridge.  It belongs to every spanning tree and to no
cographic basis, so \(C_H\) is independent of its weight and the corresponding
partial derivative is zero.  At an interior maximiser, however, homogeneity
and the total mass \(r\) make every partial derivative equal to the positive
Lagrange multiplier \(C_H(\lambda)\).  The two conclusions are incompatible.
\end{proof}

\begin{lemma}
\label{lem:cographic-parallel-class-merge}
Let \(e,f\) be parallel elements of a cographic matroid \(N\), and let \(N'\)
replace \(e\) and \(f\) by one element \(g\), leaving every other element
unchanged.  Then
\[
 B_N(\lambda)=B_{N'}(\mu),
 \qquad
 \mu_g=\lambda_e+\lambda_f,
 \quad \mu_x=\lambda_x\quad(x\ne g).
\]
Consequently \(B_N^\star=B_{N'}^\star\), with rank and normalised ratio
unchanged.  In graph language, suppressing a degree-two vertex in a
bridgeless graph, when its two incident edges are distinct, is such a merge.
\end{lemma}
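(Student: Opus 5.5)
The plan is to reduce the statement to \Cref{lem:parallel-merging}, so the first task is to describe the bases of \(N'\). I claim they are obtained from the bases of \(N\) by replacing \(e\) or \(f\) with \(g\). Since \(e,f\) are parallel nonloops, no basis of \(N\) contains both. The bases of \(N\) avoiding \(\{e,f\}\) are the bases of \(N\setminus\{e,f\}\). Those containing \(e\) correspond, via \(e\mapsto g\), to the bases of \(N'\) containing \(g\), because \(N'\) is isomorphic to \(N\setminus f\) under \(e\mapsto g\). The same holds with \(f\) in place of \(e\), using \(N'\cong N\setminus e\).

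With this description, the monomials of \(B_N\) fall into three groups. Those avoiding \(\{e,f\}\) match identically in \(B_{N'}\). The monomials containing \(e\) pair bijectively with those containing \(f\), by swapping \(e\) and \(f\) as in \Cref{lem:parallel-merging}. Each such pair sums to \((\lambda_e+\lambda_f)\) times a common monomial, which is exactly the corresponding \(g\)-monomial of \(B_{N'}\) evaluated at \(\mu\). This proves \(B_N(\lambda)=B_{N'}(\mu)\).

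Next comes the optimum. The map \(\lambda\mapsto\mu\) preserves total weight and carries the rank simplex of \(N\) onto that of \(N'\). Surjectivity holds because any \(\mu\) lifts, for instance by \(\lambda_e=\mu_g\) and \(\lambda_f=0\). Hence the two maxima coincide. The rank is unchanged because \(N'\cong N\setminus f\) and \(f\) is parallel to the nonloop \(e\). The normalised ratio \(B^\star/c_r\) is therefore unchanged as well. I should also note that \(N'\) stays cographic, since it is a deletion of \(N\).

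The graph statement is the step needing the most care, and I treat it separately. Let \(v\) be a degree-two vertex with distinct incident edges \(e=uv\) and \(f=vw\) in a connected bridgeless graph \(G\), and set \(N=M^*(G)\). Because \(G\) is bridgeless, \(\{e,f\}\) is a two-edge cut and neither edge alone is a cut. So \(\{e,f\}\) is a cocircuit of \(M(G)\), hence a circuit of \(M^*(G)\), and \(e,f\) are nonloop parallel elements of \(N\). Suppressing \(v\) gives a graph \(G'\) equal to \(G/f\) with \(e\) relabelled \(g\), and \(M^*(G/f)=M^*(G)\setminus f\). Thus \(M^*(G')\) is exactly \(N'\), and the general statement applies.
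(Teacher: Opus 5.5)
Your proposal is correct and is essentially the paper's argument: pair the monomials using \(e\) with those using \(f\), lift any weighting of \(N'\) by splitting \(\mu_g\) between \(e\) and \(f\) (you put all of it on \(e\)), and recognise the two edges at a degree-two vertex of a bridgeless graph as a two-edge bond, hence a parallel pair in \(M^*(G)\). Your identification of the suppressed graph with \(G/f\), via \(M^*(G/f)=M^*(G)\setminus f\), spells out the paper's final sentence. The one small slip is the phrase ``bases of \(N\setminus\{e,f\}\)'', which should read ``\(r\)-element independent sets of \(N\setminus\{e,f\}\)''; the two differ when \(\{e,f\}\) is also a cocircuit, but this does not affect your matching with the \(g\)-avoiding bases of \(N'\).
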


\begin{proof}
A basis contains at most one member of a parallel class.  Pair each monomial
using \(e\) with the corresponding monomial using \(f\).  Their sum depends
on the two weights only through \(\lambda_e+\lambda_f=\mu_g\), while the
monomials avoiding the class are unchanged.  The transformation preserves
total weight and rank, and conversely the weight of \(g\) may be split
arbitrarily between
\(e\) and \(f\).  The two maxima are therefore equal.

At a degree-two vertex, the two incident edges form a two-edge bond.  A graph
bond is a circuit of its cographic matroid, so the two elements are parallel.
Suppressing the vertex performs the stated merge.
\end{proof}

\begin{lemma}
\label{lem:cographic-graph-parallel-descent}
Let \(H\) contain two graph-parallel edges \(e,f\), and let \(H'\) replace
them by one edge \(g\).  If \(H\) has cycle rank \(r\ge2\), then \(H'\) has
cycle rank \(r-1\).  When \(w:=\lambda_e+\lambda_f>0\),
\[
 C_H(\lambda)
 =(\lambda_e+\lambda_f)C_{H'}(\mu),
 \qquad
 \mu_g=\frac{\lambda_e\lambda_f}{\lambda_e+\lambda_f},
 \quad \mu_x=\lambda_x\quad(x\ne g).
\]
Consequently
\[
 C_H^\star\le\frac43C_{H'}^\star,
 \qquad
 \rho(H)\le\rho(H').
\]
\end{lemma}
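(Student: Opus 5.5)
I will prove the lemma in three stages: the cycle-rank count, the polynomial identity obtained by pairing spanning trees, and then a scaling argument that turns the identity into the optimum bound.

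\emph{Cycle rank.} Replacing \(e,f\) by a single edge \(g\) keeps the vertex set and removes one edge, so \(|E|-|V|+1\) drops from \(r\) to \(r-1\).

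\emph{The identity.} I classify the spanning trees \(T\) of \(H\) by how they meet \(\{e,f\}\). No spanning tree contains both \(e\) and \(f\), since together they form a 2-cycle.
\begin{itemize}
\item If \(T\) avoids both, then \(T\) is a spanning tree of \(H'\) avoiding \(g\). Its cotree monomial in \(H\) contains the factor \(\lambda_e\lambda_f\). In \(H'\) the same monomial contains \(\mu_g\), so it equals \((\lambda_e+\lambda_f)\mu_g\) times the common remaining product.
\item If \(T\) contains exactly one of them, say \(e\), then \(T-e+g\) is a spanning tree of \(H'\) containing \(g\). Its cotree monomial in \(H\) is \(\lambda_f\) times the remaining product. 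The tree using \(f\) in place of \(e\) gives \(\lambda_e\) times the same product. Summing the pair gives \((\lambda_e+\lambda_f)\) times the cotree monomial of the tree in \(H'\), which does not involve \(\mu_g\).
\end{itemize}
Adding the two cases yields \(C_H(\lambda)=w\,C_{H'}(\mu)\).

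\emph{Bounding the optimum.} The mass of \(\mu\) is \(r-w+\mu_g\). Since \(\mu_g=\lambda_e\lambda_f/w\le w/4\), this mass is at most \(r-\tfrac34 w\). Write \(s\) for this mass; if \(s=0\), the bound is trivial.

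By degree-\((r-1)\) homogeneity of \(C_{H'}\),
\[
C_{H'}(\mu)\le \left(\frac{s}{r-1}\right)^{r-1}C_{H'}^\star.
\]
Hence
\[
C_H(\lambda)\le \phi(w)\,C_{H'}^\star,\qquad \phi(w):=w\left(\frac{r-\tfrac34w}{r-1}\right)^{r-1},\quad 0\le w\le \frac{4r}{3}.
\]
Maximising \(\log\phi\) gives the critical point \(w^\ast=4/3\), at which the bracket equals \(1\). Therefore \(\phi\le 4/3\) and \(C_H^\star\le\tfrac43 C_{H'}^\star\).

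For the ratio, \(c_r/c_{r-1}=2r/(r+1)\ge\tfrac43\) holds exactly when \(r\ge2\), which is the hypothesis. Then
\[
\rho(H)=\frac{C_H^\star}{c_r}\le\frac{4}{3}\cdot\frac{C_{H'}^\star}{c_r}\le\frac{C_{H'}^\star}{c_{r-1}}=\rho(H').
\]

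\emph{Where care is needed.} The most delicate step is the bookkeeping in the pairing, in particular checking that trees of \(H'\) containing \(g\) correspond 2-to-1 to trees of \(H\) containing exactly one of \(e,f\). For the bound itself, when \(w=0\) the right side is \(0\) and the inequality is trivial, so the formula for \(\mu\), which needs \(w>0\), is used only when \(w>0\).
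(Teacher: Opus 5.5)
Your proposal is correct and follows essentially the same route as the paper. Both use the same classification of spanning trees by how they meet $\{e,f\}$, the mass bound $r-\tfrac34w$ from $\lambda_e\lambda_f/w\le w/4$, degree-$(r-1)$ homogeneity, and the comparison $c_r/c_{r-1}=2r/(r+1)\ge\tfrac43$ for $r\ge2$.

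The only variation is in bounding $w\bigl((r-\tfrac34w)/(r-1)\bigr)^{r-1}$. You locate its critical point at $w=4/3$; state that this is the global maximum because the logarithm is concave. The paper reaches the same value $\tfrac43$ by weighted AM--GM with $x=3w/4$. For $w=0$, justify the trivial case by noting that every cotree contains $e$ or $f$, so $C_H(\lambda)=0$.
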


\begin{proof}
The polynomial identity comes from the two possible ways in which a spanning
tree can meet the parallel pair.  No spanning tree contains both edges.  If
it contains exactly one, the other lies in the cotree and the two choices
together contribute \(\lambda_e+\lambda_f\).  If it contains neither, both
edges lie in the cotree and contribute \(\lambda_e\lambda_f\).  Matching these
two classes with the corresponding spanning trees of \(H'\) proves the
identity.

If \(w=\lambda_e+\lambda_f=0\), then every cographic basis has zero
weight, because a spanning tree cannot contain both parallel edges.  Thus
\(C_H(\lambda)=0\), so the claimed inequality holds.  We may assume \(w>0\).
The transformed vector then has mass
\[
 \sum_x\mu_x
 =r-w+\frac{\lambda_e\lambda_f}{w}
 \le r-\frac{3w}{4}.
\]
The only remaining loss is the rescaling of this mass to \(r-1\).
Homogeneity in degree \(r-1\) gives
\[
 C_H(\lambda)\le
 w\left(\frac{r-3w/4}{r-1}\right)^{r-1}C_{H'}^\star.
\]
To compare the resulting factor with the normalisation, put \(x=3w/4\).
The transformed mass is nonnegative, so \(0\le x\le r\).  Weighted
arithmetic--geometric mean gives
\[
 x^{1/r}
 \left(\frac{r-x}{r-1}\right)^{(r-1)/r}
 \le \frac{x+(r-x)}r=1.
\]
Consequently
\[
 w\left(\frac{r-3w/4}{r-1}\right)^{r-1}
 =\frac43x\left(\frac{r-x}{r-1}\right)^{r-1}
 \le\frac43.
\]
The rank has fallen by one, and
\(c_r/c_{r-1}=2r/(r+1)\geq4/3\) for \(r\geq2\).  Thus the factor \(4/3\) in
the basis-mass estimate is no greater than the change in the normalising
constant, so \(\rho(H)\leq\rho(H')\).
\end{proof}

\subsection{A cubic star at an interior maximiser}

The one-edge estimate is strong when an individual edge carries enough
weight.  If all edge weights are small, we need a second move.  A cubic
vertex provides one, because its three incident edges form a cographic
circuit.  At an interior maximiser, the circuit relation implies that, after
reorientation, the three pairwise inner products of the corresponding columns
are all \(-1/2\).  This determines the mixed derivative for every pair.

\begin{lemma}
\label{lem:cographic-equilateral-star}
Let \(G\) be connected, simple, and bridgeless, let \(\lambda>0\) be an interior
maximiser of \(C_G\), and let \(e,f,g\) be the three edges incident with a
degree-three vertex \(v\).  For any two distinct members \(h,k\) of this star,
\begin{equation}
 \frac{\partial_{hk}C_G(\lambda)}{C_G(\lambda)}=\frac34.
 \label{eq:cographic-equilateral-star}
\end{equation}
\end{lemma}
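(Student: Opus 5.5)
We work in the Gram picture of \Cref{lem:cographic-determinant}.  Fix a totally unimodular representation \(Z=(z_x)_{x\in E}\) of \(M^*(G)\), and put \(H=Z\diag(\lambda)Z^{\top}\).  Since \(\lambda>0\) and \(C_G(\lambda)>0\), the matrix \(H\) is positive definite.  By \Cref{lem:positive-support-reduction}(iv) together with \eqref{eq:regular-gram-first-derivative}, every column has \(\langle z_x,z_x\rangle_H=1\).  By \eqref{eq:regular-gram-second-derivative}, for distinct \(h,k\) in the star,
\[
 \frac{\partial_{hk}C_G(\lambda)}{C_G(\lambda)}
 =1-\langle z_h,z_k\rangle_H^2 .
\]
So the claim reduces to showing that each pairwise inner product among \(z_e,z_f,z_g\) has absolute value \(1/2\).

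\emph{Step 1: the circuit relation.}  The three edges at \(v\) form the bond \(\delta(v)\).  Since \(G\) is simple and bridgeless, no proper subset of \(\{e,f,g\}\) is a bond.  Indeed, a single edge would be a bridge, and a two-edge subset of \(\delta(v)\) is not a cut unless \(g\) is a bridge.  Hence \(\{e,f,g\}\) is a circuit of \(M^*(G)\).  In a totally unimodular (in particular regular, signed-graphic-style) representation, a circuit gives a linear dependency with coefficients \(\pm1\).  To justify this, restrict to the columns of the circuit: they span a rank-two space, and any two of them form a basis of it.  The coordinates of the third column in that basis are ratios of \(2\times2\) minors, which lie in \(\{\pm1\}\) after extending to full bases, exactly as in the \(R_{10}\) argument.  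After replacing some columns by their negatives, which changes neither \(H\) nor the matroid, we may write \(z_e+z_f+z_g=0\).

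\emph{Step 2: equilateral geometry.}  Set \(p=\langle z_e,z_f\rangle_H\), \(q=\langle z_e,z_g\rangle_H\), \(s=\langle z_f,z_g\rangle_H\).  Taking the \(H^{-1}\)-inner product of \(z_e+z_f+z_g=0\) with \(z_e\), \(z_f\), and \(z_g\) in turn, and using unit norms, gives
\[
 1+p+q=0,\qquad p+1+s=0,\qquad q+s+1=0 .
\]
Hence \(p=q=s=-1/2\).  Substituting into the displayed second-derivative formula yields \(1-\tfrac14=\tfrac34\) for every pair, which is \eqref{eq:cographic-equilateral-star}.

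The only step needing care is Step 1.  It requires showing that \(\{e,f,g\}\) is a genuine circuit, which is where simplicity and bridgelessness enter, and that total unimodularity forces unit coefficients in the dependency.  Everything after that is linear algebra already supplied by \Cref{lem:regular-gram-calculus}.
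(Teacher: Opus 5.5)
Your proposal is correct and follows the paper's proof essentially step for step: show that the star at \(v\) is a bond and hence a circuit of \(M^*(G)\), use Cramer's rule and total unimodularity to normalise the relation to \(z_e+z_f+z_g=0\), and then combine unit \(H\)-norms with the mixed-derivative formula to get pairwise inner products \(-1/2\) and the value \(3/4\). The differences are cosmetic: you certify the bond by excluding proper sub-bonds, where the paper shows \(G-v\) is connected, and you solve the three-equation system, where the paper expands \(\lVert z_e+z_f\rVert_H^2=\lVert z_g\rVert_H^2=1\).
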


\begin{proof}
The three edges must first be seen as a cographic circuit.  Every component
of \(G-v\) meets \(v\) in at least two edges, since a unique joining edge
would be a bridge.  As \(v\) has degree three, there can be only one such
component.  Thus \(G-v\) is connected, so \(\{e,f,g\}\) is a bond and
therefore a circuit of \(M^*(G)\).

Choose a totally unimodular representation \(Z=(z_x)\) of \(M^*(G)\).
Extend two of \(z_e,z_f,z_g\) to a column basis.  Cramer's rule expresses
the primitive circuit coefficients as ratios of nonzero full minors, so
total unimodularity makes each coefficient \(\pm1\).  After reorienting the
three columns, we may write their relation as
\[
 z_e+z_f+z_g=0.
\]
Put \(H=Z\diag(\lambda)Z^{\top}\).  By
\Cref{lem:regular-gram-calculus}, all active columns have unit length in the
inner product \(\langle u,w\rangle_H=u^{\top}H^{-1}w\).  Evaluating the
circuit relation in this inner product gives
\[
 1=\lVert z_e+z_f\rVert_H^2
  =2+2\langle z_e,z_f\rangle_H.
\]
Thus \(\langle z_e,z_f\rangle_H=-1/2\), and the same calculation applies to
the other two pairs.  The mixed-derivative identity from
\Cref{lem:regular-gram-calculus} now gives
\[
 \frac{\partial_{ef}C_G}{C_G}
 =\det\begin{pmatrix}1&-1/2\\-1/2&1\end{pmatrix}
 =\frac34.
\]
The other two pairs give the same value.  In particular, deleting any two
edges of the star retains exactly three quarters of the value at the original
weighting.
\end{proof}

\subsection{Frozen-weight deletion}

Both deletion moves use the same comparison.  A mixed derivative gives the
value retained at the original weights.  Any edges that become bridges may
then be assigned weight zero before the remaining mass is rescaled to the
new cycle rank.

\begin{lemma}
\label{lem:cographic-frozen-weight-deletion}
Let \(G=(V,E)\) be connected of cycle rank \(r\), and let \(\lambda>0\) be an
interior maximiser of \(C_G\) on \(\sum_{e\in E}\lambda_e=r\).  Let
\(S\subseteq D\subseteq E\), with \(|S|=k\) and \(1\le k\le r-1\), and
suppose every edge of \(D\setminus S\) is a bridge of \(G-S\).  Write
\(\partial_S:=\prod_{e\in S}\partial_e\), and put
\[
 \theta_S:=\frac{\partial_S C_G(\lambda)}{C_G(\lambda)},
 \qquad
 m_D:=\sum_{e\in D}\lambda_e.
\]
If \(\theta_S>0\), then \(G-S\) is connected of cycle rank \(r-k\),
\(m_D<r\), and
\begin{equation}
 \frac{\rho(G-S)}{\rho(G)}
 \ge
 \theta_S\frac{2^k(r-k+1)}{r+1}
 \left(\frac{r-k}{r-m_D}\right)^{r-k}.
 \label{eq:cographic-frozen-weight-deletion}
\end{equation}
\end{lemma}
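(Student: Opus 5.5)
We plan to build an explicit feasible weighting on \(G-S\) whose value is at least \(\theta_S C_G(\lambda)\) times an explicit factor. Dividing by \(c_{r-k}\) and using \(C_G(\lambda)=C_G^\star=\rho(G)c_r\) will then give the ratio in \eqref{eq:cographic-frozen-weight-deletion}.

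\textbf{Step 1: identify \(\partial_S C_G\) with the deleted graph's polynomial.} The derivative \(\partial_S C_G\) keeps exactly the monomials indexed by spanning trees \(T\) with \(S\cap T=\emptyset\), each with the factors \(\lambda_e\), \(e\in S\), removed. Such trees are precisely the spanning trees of \(G-S\), and the surviving monomials are \(\prod_{e\in E\setminus(S\cup T)}\lambda_e\). Hence
\[
 \partial_S C_G(\lambda)=C_{G-S}(\lambda|_{E\setminus S}).
\]
If \(\theta_S>0\), some spanning tree of \(G\) avoids \(S\), so \(G-S\) is connected. Its cycle rank is then \(|E|-k-|V|+1=r-k\).

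\textbf{Step 2: zero out the new bridges and check the mass bound.} Every edge of \(D\setminus S\) is a bridge of \(G-S\). A bridge lies in every spanning tree, so it never appears in a cotree monomial, and \(C_{G-S}\) does not depend on its weight. Define \(\mu\) on \(E\setminus S\) to be \(\lambda\) off \(D\) and zero on \(D\setminus S\). Then \(C_{G-S}(\mu)=\theta_S C_G(\lambda)>0\), and \(\mu\) has mass \(r-m_D\).

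Positivity forces \(r-m_D>0\). Otherwise \(\mu=0\), and since \(r-k\ge1\) the degree-\((r-k)\) polynomial would vanish at \(\mu\). This gives \(m_D<r\).

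\textbf{Step 3: rescale and divide.} Rescale \(\mu\) by \((r-k)/(r-m_D)\) so its mass equals the new cycle rank \(r-k\). Homogeneity of degree \(r-k\) gives
\[
 C_{G-S}^\star\ge\left(\frac{r-k}{r-m_D}\right)^{r-k}\theta_S\,C_G^\star .
\]
Dividing by \(c_{r-k}\) and using \(C_G^\star=\rho(G)c_r\) gives
\[
 \frac{\rho(G-S)}{\rho(G)}\ge\theta_S\frac{c_r}{c_{r-k}}\left(\frac{r-k}{r-m_D}\right)^{r-k}.
\]
Finally, \(c_r/c_{r-k}=2^k(r-k+1)/(r+1)\), which is exactly the constant in \eqref{eq:cographic-frozen-weight-deletion}.

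\textbf{Main obstacle.} I expect the only delicate point to be the bookkeeping in Steps 1 and 2. The derivative identity requires matching cotree monomials correctly. The zeroing of bridge weights must be justified by their absence from every cotree monomial. It also matters that \(\rho\) is taken with respect to the graph's own cycle rank, which is why the connectivity conclusion in Step 1 must come first. The rest is routine.
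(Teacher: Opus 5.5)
Your proposal is correct and follows the paper's proof essentially step for step. The steps are the same: the identity \(\partial_S C_G=C_{G-S}\), zeroing the weights on the bridges in \(D\setminus S\), rescaling the remaining mass \(r-m_D\) to \(r-k\), and dividing by \(c_{r-k}\). The only cosmetic difference is how you get \(m_D<r\): you use \(C_{G-S}(\mu)>0\) together with homogeneity, which does not need \(\lambda>0\) at that point. The paper instead notes that a cotree of \(G-S\) consists of \(r-k\ge1\) positively weighted edges outside \(D\).
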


\begin{proof}
The positive mixed derivative means that some cographic basis contains
\(S\), so its complementary spanning tree avoids \(S\).  Thus \(G-S\) is
connected and has cycle rank \(r-k\).  Multiaffinity and the spanning-tree
correspondence give
\[
 C_{G-S}(\lambda|_{E\setminus S})
 =\partial_S C_G(\lambda)
 =\theta_S C_G(\lambda).
\]
Every edge of \(D\setminus S\) belongs to every spanning tree of \(G-S\),
so this polynomial is independent of those weights.  A cotree of \(G-S\)
has \(r-k\ge1\) edges, all outside \(D\).  Positivity of the original
weights therefore gives \(r-m_D=\sum_{e\notin D}\lambda_e>0\).

Define a weighting of \(G-S\) by
\[
 \widehat\lambda_e=
 \begin{cases}
  0,&e\in D\setminus S,\\[2pt]
  \displaystyle\frac{r-k}{r-m_D}\lambda_e,&e\in E\setminus D.
 \end{cases}
\]
Its total mass is \(r-k\).  Independence of the bridge weights and
homogeneity in degree \(r-k\) give
\[
 C_{G-S}^\star
 \ge C_{G-S}(\widehat\lambda)
 =\theta_S\left(\frac{r-k}{r-m_D}\right)^{r-k}C_G^\star.
\]
Finally,
\[
 \frac{\rho(G-S)}{\rho(G)}
 =\frac{C_{G-S}^\star}{C_G^\star}\frac{c_r}{c_{r-k}},
 \qquad
 \frac{c_r}{c_{r-k}}=\frac{2^k(r-k+1)}{r+1},
\]
which proves \eqref{eq:cographic-frozen-weight-deletion}.
\end{proof}

For one-edge deletion, first-order optimality makes the retained-value
factor equal to one.  Keeping all other weights before rescaling gives the
following bound in terms of the deleted edge alone.

\begin{proposition}
\label{prop:cographic-one-edge-descent}
Let \(G\) be connected of cycle rank \(r\ge2\), and let \(\lambda>0\) be an
interior maximiser of \(C_G\).  Then, for every edge \(e\), the graph \(G-e\)
is connected and
\begin{equation}
 \frac{\rho(G-e)}{\rho(G)}
 \ge D_r(\lambda_e):=
 \frac{2r}{r+1}
 \left(\frac{r-1}{r-\lambda_e}\right)^{r-1}.
 \label{eq:frozen-factor}
\end{equation}
\end{proposition}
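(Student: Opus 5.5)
This is the special case \(S=D=\{e\}\), \(k=1\), of \Cref{lem:cographic-frozen-weight-deletion}. The plan is to verify that lemma's hypotheses and then identify each factor in \eqref{eq:cographic-frozen-weight-deletion}.

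With \(S=D=\{e\}\), the condition on \(D\setminus S\) is vacuous. The constraint \(1\le k\le r-1\) holds because \(r\ge2\), and \(m_D=\lambda_e\).

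The key input is the value of \(\theta_{\{e\}}=\partial_eC_G(\lambda)/C_G(\lambda)\). Since \(\lambda>0\) is an interior maximiser, we have \(C_G(\lambda)=C_G^\star\). This is positive: the uniform weighting gives a positive value because \(r\ge1\) ensures cotrees exist. Hence \Cref{lem:positive-support-reduction}(iv), applied to the matroid \(M^*(G)\) with full support \(A=E\), gives \(\partial_eC_G(\lambda)=C_G(\lambda)\). Therefore \(\theta_{\{e\}}=1>0\).

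Now \Cref{lem:cographic-frozen-weight-deletion} applies. It first yields that \(G-e\) is connected of cycle rank \(r-1\) and that \(\lambda_e<r\). Substituting \(k=1\), \(\theta=1\) and \(m_D=\lambda_e\) into \eqref{eq:cographic-frozen-weight-deletion} gives
\[
\frac{\rho(G-e)}{\rho(G)}\ge\frac{2r}{r+1}\left(\frac{r-1}{r-\lambda_e}\right)^{r-1},
\]
which is exactly \(D_r(\lambda_e)\).

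Connectivity can also be seen directly: \(\partial_eC_G(\lambda)>0\) means some cotree contains \(e\), so some spanning tree avoids \(e\).

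I do not expect a real obstacle. The only point needing care is confirming that \(C_G(\lambda)>0\), so that the Euler/KKT identity of \Cref{lem:positive-support-reduction} can be invoked with the multiplier equal to \(C_G(\lambda)\).
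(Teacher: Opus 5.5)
Your proposal is correct and is essentially the paper's own proof: you use \Cref{lem:positive-support-reduction}(iv) to get \(\theta_{\{e\}}=1\), then apply \Cref{lem:cographic-frozen-weight-deletion} with \(k=1\) and \(S=D=\{e\}\). Your extra check that \(C_G(\lambda)>0\), together with the substitution \(2^1(r-1+1)/(r+1)=2r/(r+1)\), just makes explicit what the paper leaves implicit.
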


\begin{proof}
By \Cref{lem:positive-support-reduction},
\(\partial_eC_G(\lambda)=C_G(\lambda)\), so \(\theta_{\{e\}}=1\).
Apply \Cref{lem:cographic-frozen-weight-deletion} with
\(k=1\) and \(S=D=\{e\}\).
\end{proof}

For two-edge deletion at a cubic star, the retained-value factor is
\(3/4\), but the third star edge then
becomes a bridge.  Its weight may also be discarded before rescaling.  The
rescaling cost therefore depends on the full mass of the star, rather than
only on the two deleted weights.

\begin{proposition}
\label{prop:cographic-conditioned-star-descent}
Let \(G\) be connected, simple, and bridgeless, of cycle rank \(r\ge3\), and let
\(\lambda\in\mathbb R_{>0}^{E(G)}\) be an interior maximiser of \(C_G\)
subject to \(\sum_e\lambda_e=r\).  Suppose that \(v\) has degree three, with
incident edges \(e,f,g\), and put
\[
 S:=\{e,f\},
 \qquad
 s_v:=\lambda_e+\lambda_f+\lambda_g.
\]
Then \(G-S\) is connected, has cycle rank \(r-2\), and
\begin{equation}
 \frac{\rho(G-S)}{\rho(G)}
 \ge
 \frac{3(r-1)}{r+1}
 \left(\frac{r-2}{r-s_v}\right)^{r-2}.
 \label{eq:conditioned-star-factor}
\end{equation}
The same assertion holds for any choice of two of the three edges incident
with \(v\).
\end{proposition}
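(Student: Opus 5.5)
The plan is to apply \Cref{lem:cographic-frozen-weight-deletion} with \(k=2\), \(S=\{e,f\}\) and \(D=\{e,f,g\}\). By \Cref{lem:cographic-equilateral-star}, \(\theta_S=\partial_{ef}C_G(\lambda)/C_G(\lambda)=3/4\). The hypotheses of that lemma hold: \(G\) is connected, simple, and bridgeless; \(\lambda>0\) is an interior maximiser; and \(v\) has degree three.

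Since \(r\ge3\), we have \(1\le k=2\le r-1\), as the frozen-weight lemma requires. The remaining hypothesis is that every edge of \(D\setminus S=\{g\}\) is a bridge of \(G-S\). After deleting \(e\) and \(f\), the vertex \(v\) has degree one in \(G-S\), with \(g\) its only incident edge. Thus \(g\) is a pendant edge, which is a bridge provided \(G-S\) is connected. Connectivity of \(G-S\) follows from \(\theta_S=3/4>0\) by the first conclusion of the frozen-weight lemma, but that lemma uses the bridge hypothesis only later, in building the weighting. Alternatively, one argues directly: the proof of \Cref{lem:cographic-equilateral-star} shows that \(G-v\) is connected, so \(G-S\) is \(G-v\) with \(v\) reattached by \(g\), hence connected. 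Then \(g\) is a pendant edge and so a bridge.

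Substituting \(\theta_S=3/4\), \(k=2\) and \(m_D=s_v\) into \eqref{eq:cographic-frozen-weight-deletion} gives the factor
\[
 \frac34\cdot\frac{4(r-1)}{r+1}\left(\frac{r-2}{r-s_v}\right)^{r-2}
 =\frac{3(r-1)}{r+1}\left(\frac{r-2}{r-s_v}\right)^{r-2}.
\]
This is exactly \eqref{eq:conditioned-star-factor}. The lemma also yields that \(G-S\) is connected of cycle rank \(r-2\) and that \(s_v<r\), so the right-hand side is finite.

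The assertion for any pair of star edges follows by symmetry. \Cref{lem:cographic-equilateral-star} gives the value \(3/4\) for every pair, and the bridge argument applies to whichever edge remains. The only real subtlety is checking the bridge hypothesis before invoking the lemma, and the connectivity of \(G-v\) settles it.
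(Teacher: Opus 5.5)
Your proposal is correct and is essentially the paper's proof: you take \(\theta_{\{e,f\}}=3/4\) from the equilateral-star lemma, observe that \(g\) becomes a pendant edge and hence a bridge of \(G-S\), and apply the frozen-weight deletion lemma with \(k=2\), \(D=\{e,f,g\}\) and \(m_D=s_v\). Your separate connectivity check is harmless but unnecessary, since deleting an edge at a degree-one vertex isolates that vertex, so it is a bridge whether or not \(G-S\) is connected.
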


\begin{proof}
By \Cref{lem:cographic-equilateral-star}, \(\theta_{\{e,f\}}=3/4\).
Deleting \(e,f\) makes \(v\) a leaf, so \(g\) is a bridge of \(G-S\).
Apply \Cref{lem:cographic-frozen-weight-deletion} with \(k=2\) and
\(D=\{e,f,g\}\).  Here \(m_D=s_v\), and
\((3/4)\,2^2(r-1)/(r+1)=3(r-1)/(r+1)\), giving
\eqref{eq:conditioned-star-factor} for any choice of the pair.
\end{proof}

The two estimates now ask for different information.  The one-edge
bound is strong when some \(\lambda_e\) is large, while the two-edge bound is
strong when a cubic star has large total mass.  It remains to show that every
reduced graph supplies one of these two witnesses.
\subsection{Finding an improving deletion}
\label{sec:all-rank-cographic}

Let \(G\) be connected, simple, bridgeless, and nonplanar, with minimum
degree at least three and cycle rank at least four.  The preceding estimates
suggest exactly what to look for.  An edge of weight at least \(1/2\) makes
one-edge deletion profitable.  If no such edge exists, the weights are light
enough that degree counting forces a cubic vertex with incident mass at least
\(r/(r-1)\), and the two-edge deletion becomes profitable instead.  We begin
by checking that the two rank-dependent gains are greater than one.

\begin{lemma}
\label{lem:cographic-uniform-factor}
For integers \(r\ge4\), put
\[
 d_r:=\frac{2r}{r+1}
       \left(\frac{2(r-1)}{2r-1}\right)^{r-1},
 \qquad
 F_r:=\frac{3(r-1)}{r+1}
       \left(\frac{r-1}{r}\right)^{r-2}.
\]
Both sequences are strictly increasing.  Consequently
\begin{equation}
 d_r\ge d_4=\frac{1728}{1715}>1,
 \qquad
 F_r\ge F_4=\frac{81}{80}>1.
 \label{eq:uniform-two-edge-factor}
\end{equation}
\end{lemma}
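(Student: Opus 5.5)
Both claims reduce to showing that \(\log d_r\) and \(\log F_r\) increase in \(r\); the numerical values at \(r=4\) are then direct substitution.

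\textbf{The sequence \(d_r\).} Write
\[
 \log d_r=\log\frac{2r}{r+1}+(r-1)\log\Bigl(1-\frac1{2r-1}\Bigr).
\]
The first term is increasing. For the second, put \(x=r-1\). It becomes \(-x\log\bigl(1+\frac1{2x}\bigr)\), since
\[
 \frac{2x+1}{2x}=\frac{2r-1}{2(r-1)}.
\]
Consider \(\phi(x)=x\log\bigl(1+\frac1{2x}\bigr)\). Its derivative is
\[
 \phi'(x)=\log\Bigl(1+\frac1{2x}\Bigr)-\frac{1}{2x+1}.
\]
This is positive, because \(\log(1+t)>t/(1+t)\) for \(t>0\), applied with \(t=1/(2x)\). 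So \(\phi\) is increasing and \(-\phi\) is decreasing, and the two terms pull in opposite directions. A cleaner route is to use the monotone limit \(\phi(x)\uparrow 1/2\). One then compares the discrete differences directly:
\[
 \log d_{r+1}-\log d_r
 =\log\frac{(r+1)^2}{r(r+2)}-\bigl[\phi(r)-\phi(r-1)\bigr].
\]
The first piece equals \(\log\bigl(1+\frac1{r(r+2)}\bigr)\approx 1/r^2\). For the second, expand
\[
 \phi(x)=\frac12-\frac1{8x}+O(x^{-2}),
\]
so the increment is about \(1/(8r^2)\). The increment must therefore be bounded by something strictly below \(\log\bigl(1+\frac1{r(r+2)}\bigr)\) for all \(r\ge4\). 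I would obtain this bound from alternating-series bounds on \(\log(1+t)\):
\[
 t-\frac{t^2}{2}\;\le\;\log(1+t)\;\le\;t-\frac{t^2}{2}+\frac{t^3}{3}.
\]
These give two-sided bounds of the form
\[
 \frac12-\frac1{8x}\;\le\;\phi(x)\;\le\;\frac12-\frac1{8x}+\frac1{24x^2}.
\]
Hence
\[
 \phi(r)-\phi(r-1)\le \frac{1}{8r(r-1)}+\frac1{24(r-1)^2},
\]
which is comfortably below \(\frac1{r(r+2)}-\frac1{2r^2(r+2)^2}\) for \(r\ge4\). This is a rational inequality that can be checked by clearing denominators.

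\textbf{The sequence \(F_r\).} The same scheme applies:
\[
 \log F_r=\log 3+\log\frac{r-1}{r+1}-(r-2)\log\frac{r}{r-1}.
\]
Let
\[
 \psi(r)=(r-2)\log\Bigl(1+\frac1{r-1}\Bigr)=1-\frac{3}{2r}+O(r^{-2}).
\]
Its increment is about \(3/(2r^2)\). The first term's increment is
\[
 \log\frac{r(r+1)}{(r-1)(r+2)}\approx\frac{2}{r^2}.
\]
So the margin is again a positive fraction of \(1/r^2\). I would bound \(\psi(r+1)-\psi(r)\) above using the same third-order Taylor bounds, and \(\log\frac{r(r+1)}{(r-1)(r+2)}=\log\bigl(1+\frac{2}{(r-1)(r+2)}\bigr)\) below using the second-order bound. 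Finally I would clear denominators to get a polynomial inequality valid for \(r\ge4\).

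\textbf{Main obstacle.} The main obstacle is the bookkeeping in these two rational inequalities: the margins are only of relative size about \(1/4\) of the leading \(1/r^2\) term. If the crude third-order bound is not tight enough at \(r=4,5\), I would check those small cases by direct numerical evaluation of \(d_r\) and \(F_r\) and apply the asymptotic inequality from \(r\ge6\) on.

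\textbf{Values at \(r=4\).} Direct substitution gives
\[
 d_4=\frac85\Bigl(\frac67\Bigr)^3=\frac{1728}{1715},
 \qquad
 F_4=\frac95\Bigl(\frac34\Bigr)^2=\frac{81}{80}.
\]
Both exceed one, which gives \eqref{eq:uniform-two-edge-factor}.
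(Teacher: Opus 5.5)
Your treatment of \(d_r\) is sound: for \(r\ge4\) you have \(\frac1{8r(r-1)}+\frac1{24(r-1)^2}\le\frac{13}{54r^2}\) and \(\frac1{r(r+2)}-\frac1{2r^2(r+2)^2}\ge\frac{47}{72r^2}\), so the rational inequality you need holds for every \(r\ge4\).

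The gap is in the \(F_r\) half, where the bookkeeping you flag as the main obstacle actually breaks the plan as stated. Use ``the same'' bounds there: \(\psi(r+1)\le(r-1)\bigl(\frac1r-\frac1{2r^2}+\frac1{3r^3}\bigr)\), \(\psi(r)\ge(r-2)\bigl(\frac1{r-1}-\frac1{2(r-1)^2}\bigr)\), and \(\log(1+u)\ge u-\frac{u^2}2\) with \(u=\frac2{(r-1)(r+2)}\). The inequality you would then have to verify is
\[
 u-\frac{u^2}{2}>\frac{3}{2r(r-1)}+\frac{5}{6r^2}-\frac{1}{3r^3}-\frac{1}{2(r-1)^2},
\]
and it is false for every \(4\le r\le 11\). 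At \(r=6\) the left side is \(0.04875\) and the right side is about \(0.0516\). Even at \(r=11\) the difference is about \(-7\cdot10^{-6}\); the inequality first holds at \(r=12\). The reason is that the true margin is only about \(\frac1{2r^2}\). Truncating \(\log\bigl(1+\frac1{r-1}\bigr)\) after its quadratic term discards \(\frac{r-2}{3(r-1)^3}\approx\frac1{3r^2}\), which leaves \(\frac1{6r^2}-\frac{13}{6r^3}+O(r^{-4})\), negative for all \(r\le11\). So your fallback, checking \(r=4,5\) numerically and using the asymptotic inequality from \(r\ge6\), leaves \(r=6,\dots,11\) unproved.

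The repair is routine but must be made. One option is to check \(F_r<F_{r+1}\) directly for all \(4\le r\le11\). Another is to keep two more terms in the lower bound for the \(\psi(r)\) term, \(\log(1+s)\ge s-\frac{s^2}2+\frac{s^3}3-\frac{s^4}4\). This makes the \(r^{-2}\) coefficients agree and leaves a margin \(\frac1{2r^2}-\frac{25}{12r^3}+O(r^{-4})\), about \(0.007\) already at \(r=4\).

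The paper avoids Taylor truncation entirely by working with exact ratios. With \(t=1/r\), it writes \(F_{r+1}/F_r=\bigl[(1+t-2t^2)(1-t^2)^{r-2}\bigr]^{-1}\), and Bernoulli gives \((1-t^2)^{-(r-2)}>1+(r-2)t^2=1+t-2t^2\). For \(d_r\), Bernoulli applied to \(\bigl(1+\frac1{(2r+1)(r-1)}\bigr)^{r-1}\) reduces monotonicity to \(4(r+1)^3-(r+2)(2r+1)^2=3r+2>0\). That argument is uniform in \(r\ge4\) and needs no finite case checking.
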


\begin{proof}
It is enough to compare consecutive terms, and Bernoulli's inequality gives
the needed comparison in both cases.  For \(d_r\),
\[
 \frac{d_{r+1}}{d_r}
 =\frac{(r+1)^2}{r(r+2)}\frac{2r}{2r+1}
  \left(1+\frac1{(2r+1)(r-1)}\right)^{r-1}.
\]
Bernoulli gives
\[
 \left(1+\frac1{(2r+1)(r-1)}\right)^{r-1}
 \ge1+\frac1{2r+1}.
\]
It follows that
\[
 \frac{d_{r+1}}{d_r}
 \ge\frac{4(r+1)^3}{(r+2)(2r+1)^2}>1,
\]
since
\(4(r+1)^3-(r+2)(2r+1)^2=3r+2>0\).

The second sequence has a similar comparison.  Put \(t=1/r\); cancelling
consecutive factors gives
\[
 \frac{F_{r+1}}{F_r}
 =\frac1{(1+t-2t^2)(1-t^2)^{r-2}}.
\]
Bernoulli's inequality gives
\[
 (1-t^2)^{-(r-2)}
 =\left(1+\frac{t^2}{1-t^2}\right)^{r-2}
 \ge1+\frac{(r-2)t^2}{1-t^2}
 >1+(r-2)t^2=1+t-2t^2.
\]
Thus \(F_{r+1}/F_r>1\).  Both factors are therefore smallest at \(r=4\),
where they have the stated values.
\end{proof}

\begin{theorem}
\label{thm:cographic-unified-descent}
Let \(G\) be connected, simple, and bridgeless, with minimum degree at least
three and cycle rank \(r\ge4\).  Let
\(\lambda\in\mathbb R_{>0}^{E(G)}\) be an interior maximiser of \(C_G\) on
\(\sum_e\lambda_e=r\).  Then one of the following alternatives holds:
\begin{enumerate}
 \item some edge \(e\) satisfies
 \[
  G-e\text{ is connected},
  \qquad
  \frac{\rho(G-e)}{\rho(G)}>1;
 \]
 \item some cubic vertex \(v\) and two incident edges \(e,f\) satisfy
 \[
  G-\{e,f\}\text{ is connected},
  \qquad
  \frac{\rho(G-\{e,f\})}{\rho(G)}
  \ge F_r\ge\frac{81}{80}>1.
 \]
\end{enumerate}
The graph obtained by deletion has cycle rank \(r-1\) in the first case and
\(r-2\) in the second.
\end{theorem}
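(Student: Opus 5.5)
The plan is to split according to whether some edge carries weight at least \(1/2\), using \Cref{prop:cographic-one-edge-descent} in the heavy case and \Cref{prop:cographic-conditioned-star-descent} in the light case. The rank statements in the two alternatives are already supplied by those propositions.

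\emph{Heavy case.} Suppose some edge \(e\) has \(\lambda_e\ge1/2\). Since \(r\ge4\ge2\), \Cref{prop:cographic-one-edge-descent} applies and shows that \(G-e\) is connected. It also gives
\[
\frac{\rho(G-e)}{\rho(G)}\ge D_r(\lambda_e)\ge D_r(1/2)=d_r,
\]
because \(D_r\) is increasing in \(\lambda_e\). By \Cref{lem:cographic-uniform-factor}, \(d_r\ge1728/1715>1\), which gives the first alternative.

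\emph{Light case.} Now suppose every edge has \(\lambda_e<1/2\). We need a cubic vertex \(v\) with \(s_v\ge r/(r-1)\). For such a vertex,
\[
\frac{r-2}{r-s_v}\ge\frac{r-2}{r-r/(r-1)}=\frac{r-1}{r},
\]
since \(r-r/(r-1)=r(r-2)/(r-1)\). Then \Cref{prop:cographic-conditioned-star-descent} (valid since \(r\ge3\)) gives ratio at least \(F_r\ge81/80\), which is the second alternative.

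To find \(v\), we use degree counting. Write \(n=|V|\), \(m=|E|\) and \(m=n+r-1\). Summing incident masses over all vertices gives
\[
\sum_v s_v=2r.
\]
A vertex of degree \(d\ge4\) has \(s_v<d/2\), because each edge has weight below \(1/2\). Let \(n_3\) be the number of cubic vertices. Then
\[
\sum_{\deg v\ge4}s_v<\tfrac12\sum_{\deg v\ge4}\deg v=\tfrac12(2m-3n_3)=m-\tfrac32 n_3,
\]
so the cubic vertices carry total mass
\[
\sum_{\deg v=3}s_v>2r-m+\tfrac32 n_3.
\]
If every cubic vertex had \(s_v<r/(r-1)\), we would get
\[
n_3\Bigl(\tfrac32-\tfrac{r}{r-1}\Bigr)<m-2r=n-r-1.
\]
This is the main obstacle, since the inequality does not immediately give a contradiction. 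The naive count fails because when \(n\) is large relative to \(r\) the surplus \(n-r-1\) can be positive. The fix is to also use \(2m\ge3n_3+4(n-n_3)\), that is \(n_3\ge4n-2m=2n-2r+2\). This forces \(n_3\) to be large whenever \(n>r+1\).

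Substituting, the left side is at least
\[
(2n-2r+2)\cdot\frac{r-3}{2(r-1)}.
\]
We compare this with \(n-r-1\) and note that \(2n-2r+2=2(n-r-1)+4\). The required inequality therefore becomes
\[
\bigl(2(n-r-1)+4\bigr)\frac{r-3}{2(r-1)}\ge n-r-1.
\]
This does not hold for all \(n\), so the counting must be refined. In particular, cubic vertices whose edges all go to cubic vertices double-count edges. I would redo the count through edges: assign each edge's weight to its endpoints, and bound the total mass at non-cubic vertices by \(\tfrac12\) per edge-endpoint. The case \(n\le r+1\), that is \(m\le2r\), is immediate from the first display, since the cubic mass exceeds \(\tfrac32 n_3\cdot\) average. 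The residual regime \(n>r+1\) is where I expect to need a more careful averaging over cubic vertices, possibly using that the weights of adjacent edges sum to at most \(1\).
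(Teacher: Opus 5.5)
Your heavy-edge case is correct and follows the paper. So is your reduction of the light case to finding a cubic vertex with $s_v\ge r/(r-1)$.

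The gap is that the proposal stops exactly where the counting closes. You assert that
\[
 \bigl(2(n-r-1)+4\bigr)\frac{r-3}{2(r-1)}\ge n-r-1
\]
does not hold for all $n$, and you leave the regime $n>r+1$ to an unspecified refined averaging. As submitted, the light case is therefore unproved.

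The missing observation is that minimum degree three gives $2m\ge 3n$. With $m=n+r-1$ this forces $n\le 2r-2$; equivalently, combine your bound $n_3\ge 4n-2m$ with $n_3\le n$. Write $p=n-r-1$. Then the displayed inequality is equivalent to $(p+2)(r-3)\ge p(r-1)$, that is, to $p\le r-3$, which is exactly $n\le 2r-2$. Hence it holds for every admissible graph. Together with $n_3\ge 2p+4$, it contradicts $n_3\frac{r-3}{2(r-1)}<p$.

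The same contradiction covers $n_3=0$, so it also gives the existence of a cubic vertex. Two minor points:
\begin{itemize}
\item Your inequality $\sum_{\deg v\ge4}s_v<\frac12\sum_{\deg v\ge4}\deg v$ is strict only if such vertices exist. Otherwise every vertex is cubic, and strictness comes from the assumption $s_v<r/(r-1)$ instead.
\item You should note $s_v<3/2<r$, so that $r-s_v>0$ in your final estimate.
\end{itemize}
The edge-based recount and the bound on adjacent weights that you propose are unnecessary.

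For comparison, the paper does the same count in the coordinates $k=|U|$ and $\delta=\sum_{v\in U}(\deg v-3)$. In these coordinates $n=2r-2-\delta$ and $c=2r-2-\delta-k$. The surplus of cubic mass over $cr/(r-1)$ is then $\frac{\delta(r+1)-k(r-3)}{2(r-1)}\ge\frac{2k}{r-1}\ge0$. Your bound $n_3\ge 4n-2m$ is precisely $\delta\ge k$. The inequality you believed could fail is precisely $\delta\ge0$, which that parametrisation makes automatic since $\delta$ is a sum of nonnegative terms.
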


\begin{proof}
Suppose that \(\lambda_e\ge1/2\) for some edge \(e\).  The one-edge
deletion bound, \Cref{prop:cographic-one-edge-descent}, gives
\[
 \frac{\rho(G-e)}{\rho(G)}
 \ge
 D_r(\lambda_e):=
 \frac{2r}{r+1}
 \left(\frac{r-1}{r-\lambda_e}\right)^{r-1}.
\]
The function \(D_r(a)\) is strictly increasing for \(a<r\).  Hence
\Cref{lem:cographic-uniform-factor} gives
\[
 D_r(\lambda_e)\ge D_r(1/2)=d_r
 \ge\frac{1728}{1715}>1.
\]
Since \(G\) is bridgeless, \(G-e\) is connected.  Thus a heavy edge gives the
first alternative.

We may therefore turn to the case in which every edge is light:
\begin{equation}
 \lambda_e<\frac12
 \qquad(e\in E(G)).
 \label{eq:all-cographic-weights-light}
\end{equation}
The task is now to find enough total weight at a cubic vertex.  Higher-degree
vertices can absorb some of the mass, so we keep track of both their number
and their excess degree.  Let
\[
 U:=\{v:\deg v\ge4\},
 \qquad
 k:=|U|,
 \qquad
 \delta:=\sum_{v\in U}(\deg v-3),
\]
and let \(c\) denote the number of cubic vertices.  If
\(n=|V(G)|\) and \(m=|E(G)|\), then the handshaking lemma and
\(r=m-n+1\) give
\begin{equation}
 \delta=2m-3n=2r-2-n,
 \qquad
 c=n-k=2r-2-\delta-k.
 \label{eq:cubic-vertex-count}
\end{equation}
Since every vertex of \(U\) contributes at least one to \(\delta\), we have
\(\delta\ge k\).

There must in fact be at least one cubic vertex.  Otherwise every vertex would
have degree at least four.  Equation~\eqref{eq:all-cographic-weights-light}
would then give \(2r<m\), whereas \(m\ge2n\) gives
\[
 m\le2(m-n)=2r-2,
\]
a contradiction.

For every vertex put
\(s_v:=\sum_{e\ni v}\lambda_e\).  The total of the vertex masses is
\(\sum_vs_v=2r\).  On the other hand, lightness bounds the mass absorbed by
the vertices in \(U\):
\[
 \sum_{v\in U}s_v
 \le\frac12\sum_{v\in U}\deg v
 =\frac{3k+\delta}{2}.
\]
Consequently
\begin{equation}
 \sum_{\deg v=3}s_v
 \ge
 2r-\frac{3k+\delta}{2}.
 \label{eq:cubic-star-mass-unified}
\end{equation}
Equality holds when \(U=\varnothing\); if \(U\ne\varnothing\), the inequality
is strict by \eqref{eq:all-cographic-weights-light}.

We want the average cubic mass to be at least \(r/(r-1)\).  Subtracting this
target from the lower bound above and using
\eqref{eq:cubic-vertex-count} gives
\[
 2r-\frac{3k+\delta}{2}-\frac{cr}{r-1}
 =
 \frac{\delta(r+1)-k(r-3)}{2(r-1)}.
\]
The numerator is nonnegative because \(\delta\ge k\):
\[
 \delta(r+1)-k(r-3)
 \ge k(r+1)-k(r-3)=4k.
\]
Thus the required average bound holds:
\[
 \sum_{\deg v=3}s_v\ge\frac{cr}{r-1}.
\]
Since \(c>0\), some cubic vertex \(v\) therefore satisfies
\[
 s_v\ge\frac r{r-1}.
\]

Choose any two edges \(e,f\) incident with this vertex.  The cubic-star
deletion bound, \Cref{prop:cographic-conditioned-star-descent}, shows that
\(G-\{e,f\}\) is connected and that
\[
 \frac{\rho(G-\{e,f\})}{\rho(G)}
 \ge
 \frac{3(r-1)}{r+1}
 \left(\frac{r-2}{r-s_v}\right)^{r-2}.
\]
Here \(s_v<3/2<r\), so all denominators are positive.  Moreover,
\[
 r-s_v
 \le r-\frac r{r-1}
 =\frac{r(r-2)}{r-1},
\]
and hence
\[
 \frac{\rho(G-\{e,f\})}{\rho(G)}
 \ge
 \frac{3(r-1)}{r+1}
 \left(\frac{r-1}{r}\right)^{r-2}
 =F_r.
\]
By \Cref{lem:cographic-uniform-factor}, \(F_r\ge81/80>1\).  Thus the absence
of a heavy edge forces the second alternative.
\end{proof}

\subsection{Proof of the cographic theorem}

\begin{theorem}
\label{thm:analytic-cographic-bound}
For every connected graph \(G\) of cycle rank \(r\),
\[
 C_G^\star\le\frac{2^r}{r+1}.
\]
\end{theorem}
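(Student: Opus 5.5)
We argue by strong induction on the cycle rank \(r\), proving the equivalent statement \(\rho(G)\le1\) for every connected graph \(G\) of cycle rank \(r\).

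\textbf{Small ranks and immediate reductions.} For \(r=0\) both sides equal \(1\). If \(G\) is planar, \Cref{cor:planar-cographic-exit} applies directly. By \Cref{lem:nonplanar-cycle-rank}, this already covers every \(r\le3\), so we may assume \(r\ge4\), \(G\) nonplanar, and the statement true for all smaller cycle ranks. If \(C_G^\star=0\) there is nothing to prove.

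\textbf{Passing to an interior maximiser.} Otherwise, fix a maximiser \(\lambda\) with \(C_G(\lambda)>0\). By \Cref{prop:cographic-support-face}, contracting its zero set gives a graph \(G_1=G/Z\) with the same cycle rank and the same optimum. Moreover, \(\lambda|_S\) is an interior maximiser on \(G_1\).

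\textbf{Simplifying the graph.} Next we clean up \(G_1\), keeping track of \(\rho\) at each step.
\begin{enumerate}[label=\textup{(\alph*)}]
\item Graph loops of \(G_1\) are coloops of the cographic matroid, so they split off as direct summands. Cycle components joined at a cut vertex likewise give direct sums. Whenever \(G_1\) is a one-sum of two pieces of positive cycle rank, \Cref{lem:one-sum-scaling} together with the induction hypothesis gives \(\rho<1\). A loop summand of rank one has optimum \(1\le c_1\).
\item \Cref{lem:cographic-active-support-bridgeless} shows that \(G_1\) has no bridge.
\item Suppressing degree-two vertices, as in \Cref{lem:cographic-parallel-class-merge}, does not change \(\rho\).
\item Merging graph-parallel edges, as in \Cref{lem:cographic-graph-parallel-descent}, gives \(\rho(H)\le\rho(H')\) with \(H'\) of smaller rank, so induction finishes that case.
\end{enumerate}
Each of these reductions either lowers the rank or preserves \(\rho\) while shrinking the graph. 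A secondary induction on \(|E|\) therefore lets us assume the reduced graph \(G_2\) is connected, simple, and bridgeless, with minimum degree at least three. If \(G_2\) is planar, or has rank at most \(3\), we are done as before.

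\textbf{The descent step.} At this point the suppressions and merges may have destroyed interiority of the maximiser. To restore it, we rerun the argument from the start on \(G_2\): take a maximiser on \(G_2\), contract its zero set, and simplify again. Each pass strictly decreases \(|E|+r\) unless nothing changes, so the process terminates with a graph that is simultaneously simple, bridgeless, of minimum degree at least three, nonplanar, of rank \(r\ge4\), and carrying an interior maximiser. \Cref{thm:cographic-unified-descent} now yields a deleted graph \(G'\), connected and of rank \(r-1\) or \(r-2\), with \(\rho(G')>\rho(G)\). By induction \(\rho(G')\le1\), hence \(\rho(G)<1\).

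\textbf{Main obstacle.} The delicate part is the bookkeeping, not the analysis. We must organise a well-founded induction in which contraction of zero-weight edges, direct-sum splitting, and degree-two suppression preserve \(\rho\) (or lower the rank), while parallel merging and the final deletion lower the rank. The cleanest way is to induct on the pair \((r,|E|)\) ordered lexicographically, and at each stage use a fresh maximiser. With that ordering, only the final configuration ever needs interiority, and the other steps appeal to the induction hypothesis directly.
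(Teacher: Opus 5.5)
Your proposal is correct and follows the paper's proof: a lexicographic induction on \((r,|E|)\) that contracts the zero-weight forest, handles planarity, graph loops, cut vertices, bridges, degree-two vertices and graph-parallel edges with the same lemmas, and finishes with \Cref{thm:cographic-unified-descent}. The ``rerun'' in your descent paragraph is unnecessary: whenever a reduction applies the induction hypothesis closes the case at once, so, as your closing paragraph observes, the descent step is reached only when the original maximiser is still interior on the unchanged graph.
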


\begin{proof}
We induct lexicographically on \((r,|E(G)|)\).  If \(r=0\), then
\(C_G^\star=1=c_0\).  The first question is whether a maximiser lies on the boundary.
Choose a maximiser \(\lambda\), and contract its zero-weight forest using
\Cref{prop:cographic-support-face}.  The resulting graph \(H\) has the same
rank and optimum: the restricted maximiser gives
\(C_H^\star\ge C_G^\star\), while extension by zero gives the reverse
inequality.  If the zero set is nonempty, then \(|E(H)|<|E(G)|\), so the
induction on the number of edges applies.  We may henceforth assume that
\(\lambda\) has full support on \(G\).

Full support rules out bridges by
\Cref{lem:cographic-active-support-bridgeless}.  If \(G\) is planar, its
cographic matroid is graphic by planar duality, and
\Cref{cor:planar-cographic-exit} proves the claim.  Thus only the nonplanar
case remains, and \Cref{lem:nonplanar-cycle-rank} gives \(r\ge4\).

Several graph features already reduce to smaller instances.  Graph
loops are cographic coloops, and every nontrivial block splitting is a direct
sum; \Cref{lem:one-sum-scaling} and rank induction handle these factors.  If
\(G\) has graph-parallel edges, let \(G'\) be their merge.  Then
\Cref{lem:cographic-graph-parallel-descent} and rank induction give
\(\rho(G)\le\rho(G')\le1\).  A degree-two vertex may instead be suppressed.
By \Cref{lem:cographic-parallel-class-merge}, the resulting graph \(G'\) has
the same rank and normalised optimum but fewer edges, so lexicographic
induction applies again.

If none of these reductions applies, the graph is connected, simple, and
bridgeless, and it has minimum degree at least three; degree-one vertices are
excluded by bridgelessness.  The chosen weighting remains an interior maximiser.
Thus the remaining graph satisfies the hypotheses of
\Cref{thm:cographic-unified-descent}.  In the first alternative, rank
induction gives
\[
 \rho(G)<\rho(G-e)\le1.
\]
In the second alternative, the same argument gives
\[
 \rho(G)<\rho(G-\{e,f\})\le1.
\]
Both deleted graphs are connected, and their ranks are respectively
\(r-1\) and \(r-2\).  In either case the normalised ratio strictly improves
as the rank falls, which completes the induction.
\end{proof}
\section{Two- and three-sums}
\label{sec:regular-two-three-sums}

We have now proved the bound for each of the three building blocks, and
\Cref{sec:regular-direct-sums} treated direct sums.  To complete the
induction, we prove the corresponding estimates for two-sums and three-sums.
For two-sums, a product comparison already preserves the rank-dependent
constant.  Three-sums lose two units of rank and require an additional
quantitative saving.  We begin with two-sums, where a basis has only two
possible ways to use the common element.

\subsection{Two-sums}

A two-sum joins its factors along an element \(z\) and then removes \(z\).
The basis on one side must use this common element and the basis on the other
side must avoid it.  To keep these two possibilities visible, let \(N_i\) be
a matroid of rank \(r_i\), with a distinguished element \(z\) that is neither
a loop nor a coloop.  In the variables outside \(z\), write
\[
 a_i=B_{N_i/z},\qquad c_i=B_{N_i\setminus z}.
\]
A basis of \(M=N_1\oplus_2N_2\) is uniquely
\((B_1\cup B_2)\setminus\{z\}\), where
\(B_i\in\mathcal B(N_i)\) and \(z\) belongs to exactly one of
\(B_1,B_2\).  The two choices are in bijection with
\(\mathcal B(N_1/z)\times\mathcal B(N_2\setminus z)\) and
\(\mathcal B(N_1\setminus z)\times\mathcal B(N_2/z)\), respectively.
Thus the two-sum \(M=N_1\oplus_2N_2\) has rank \(r_1+r_2-1\), and its basis
polynomial is
\begin{equation}
 B_M=a_1c_2+c_1a_2.
 \label{eq:two-sum-polynomial}
\end{equation}

\begin{lemma}
\label{lem:two-sum-submultiplicativity}
For every two-sum as above,
\[
 B_{N_1\oplus_2N_2}^\star
 \leq B_{N_1}^\star B_{N_2}^\star.
\]
\end{lemma}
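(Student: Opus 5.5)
The plan is to restore the shared element \(z\) by splitting the mass between the two sides, and then compare with the product weighting. Write the basis polynomial of each factor as \(B_{N_i}=\lambda_z a_i+c_i\), so that \(\lambda_z\) multiplies the contraction term. Fix a feasible weighting \(\lambda\) for \(M\) of total mass \(r=r_1+r_2-1\), and let \(s_i\) be the mass on \(E(N_i)\setminus z\), so \(s_1+s_2=r\). The polynomial \(a_i\) is homogeneous of degree \(r_i-1\), and \(c_i\) is homogeneous of degree \(r_i\).

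The key step is to choose weights \(t_1,t_2\ge0\) for \(z\) in the two factors and use the identity
\[
 B_{N_1}(\lambda^{(1)},t_1)\,B_{N_2}(\lambda^{(2)},t_2)
 =t_1t_2a_1a_2+t_1a_1c_2+t_2c_1a_2+c_1c_2 .
\]
Taking \(t_1=t_2=1\) gives
\[
 B_{N_1}B_{N_2}\ge a_1c_2+c_1a_2=B_M(\lambda),
\]
since all terms are nonnegative. The total masses are then \(s_1+1\) and \(s_2+1\), which sum to \(r+2=r_1+r_2+1\). This is one unit more than the combined rank budget \(r_1+r_2\), so the rescaling would lose.

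The main obstacle is therefore finding a better choice of \(t_i\). I would weaken the goal and only ask for
\[
 t_1a_1c_2+t_2c_1a_2\ge a_1c_2+c_1a_2
\]
with \(t_1+t_2\) as small as possible. Taking \(t_1=t_2=1\) costs mass \(2\), but only mass \(1\) is available. The natural move is to exploit the scaling freedom on each side differently. Rescale side \(i\) by a factor \(\alpha_i\) and give \(z\) weight \(t_i\). The term \(a_1c_2\) then scales as \(\alpha_1^{r_1-1}\alpha_2^{r_2}t_1\), and \(c_1a_2\) scales as \(\alpha_1^{r_1}\alpha_2^{r_2-1}t_2\). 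With \(\alpha_i\) free, the constraint is \(\alpha_1s_1+t_1\le r_1\) and \(\alpha_2s_2+t_2\le r_2\).

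Setting \(t_i=\alpha_i\tau\) makes both terms scale as \(\alpha_1^{r_1}\alpha_2^{r_2}\tau\). So one needs
\[
 \tau\,\alpha_1^{r_1}\alpha_2^{r_2}\ge1
 \quad\text{subject to}\quad
 \alpha_i(s_i+\tau)\le r_i .
\]
Choose \(\alpha_i=r_i/(s_i+\tau)\). The product \(\tau\prod_i\bigl(r_i/(s_i+\tau)\bigr)^{r_i}\) must then be at least \(1\). By weighted AM--GM with weights \(r_1,r_2\) and \(1\) on the quantities \((s_i+\tau)/r_i\) and \(1/\tau\)…, it is cleaner to maximise over \(\tau\). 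Taking \(\tau=1\) gives the requirement
\[
 \prod_i\left(\frac{s_i+1}{r_i}\right)^{r_i}\le1 .
\]
This fails in general, since \(\sum_i(s_i+1)=r_1+r_2+1\). Hence I must use the bilinear term \(t_1t_2a_1a_2\) or a different bound, and I would fall back on the following.

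By the degree count, the homogeneous degree-\(r\) polynomial \(B_M\) equals a mixed derivative-type combination. Pick \(\tau\) to be the weight of \(z\) used in both factors. Then
\[
 B_{N_1}B_{N_2}\ge\tau(a_1c_2+c_1a_2)=\tau B_M .
\]
With scale factors \(\alpha_i=r_i/(s_i+\tau)\), this yields
\[
 B_M(\lambda)\le\frac{1}{\tau}\prod_i\left(\frac{s_i+\tau}{r_i}\right)^{r_i}B_{N_1}^\star B_{N_2}^\star .
\]
Now minimise over \(\tau>0\). The logarithmic derivative gives \(\sum_i r_i/(s_i+\tau)=1/\tau\), and I would verify that the resulting value is at most \(1\). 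For instance, weighted AM--GM over the \(r+1=r_1+r_2\) "slots" should show \(\tau^{-1}\prod_i(\cdot)^{r_i}\le1\) at a suitable \(\tau\) such as \(\tau=1\) rescaled appropriately. This optimisation over \(\tau\) is the step I expect to need the most care.
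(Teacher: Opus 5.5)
Your bookkeeping is correct: \(B_{N_i}=\lambda_z a_i+c_i\), \(B_M=a_1c_2+c_1a_2\), and \(B_{N_i}(\lambda^{(i)},t)\le\bigl((s_i+t)/r_i\bigr)^{r_i}B_{N_i}^\star\). However, the step you leave for last is false, so your final plan does not prove the lemma. You need \(\min_{\tau>0}g(\tau)\le1\) for \(g(\tau)=\tau^{-1}\prod_i\bigl((s_i+\tau)/r_i\bigr)^{r_i}\).

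Take \(s_i=r_i-\tfrac12\). This occurs at genuine maximisers, for instance \(U_{3,4}=U_{2,3}\oplus_2U_{2,3}\) with every weight \(3/4\), or a two-sum of two rank-one factors with weights \(\tfrac12,\tfrac12\). Then \(\tau\sum_i r_i/(s_i+\tau)\) is increasing in \(\tau\) and equals \(1\) exactly at \(\tau=\tfrac12\). So \(g\) has its unique minimum there, and \(g(\tfrac12)=2\). The best your inequality can give is therefore \(B_M\le 2B_{N_1}^\star B_{N_2}^\star\).

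The loss is structural, not a matter of tuning \(\tau\). Since \(r_1+r_2=r+1\), only one spare unit of mass is available for the two copies of \(z\), and a common weight \(\tau\) spends \(2\tau\) of it. Moreover, the terms you discard are indispensable. Write \(U=a_1c_2\) and \(V=c_1a_2\). If \(t_1+t_2=1\), so that no rescaling loss occurs, the surviving linear part \(t_1U+t_2V\) is at most \(\max\{U,V\}\). This is smaller than \(B=U+V\) whenever both terms are positive.

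The missing idea is to split the spare unit in proportion to the two terms, \(t_1=\alpha:=U/B\) and \(t_2=\beta:=V/B\), and to keep all four terms of the product. Then \(\alpha a_1c_2+\beta c_1a_2=(U^2+V^2)/B\). Since \(\alpha\beta a_1a_2\cdot c_1c_2=(UV/B)^2\), AM--GM gives \(\alpha\beta a_1a_2+c_1c_2\ge 2UV/B\). Together these give
\[
 (\alpha a_1+c_1)(\beta a_2+c_2)-B=\frac{c_1c_2\,(B-a_1a_2)^2}{B^2}\ge0 .
\]
The paper takes \(\lambda\) to be a maximiser. It applies Euler's identity on each side together with the KKT conditions of \Cref{lem:positive-support-reduction}, obtaining \(s_1=r_1-\alpha\) and \(s_2=r_2-\beta\) exactly. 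The completed weightings therefore already lie on the rank simplices. Alternatively, within your rescaling framework, the completed masses \(s_i+t_i\) sum to \(r_1+r_2\). The weighted AM--GM step from \Cref{lem:one-sum-scaling} then bounds the product of the two rescaling factors by one, and this proves the bound for every feasible \(\lambda\) with \(B_M(\lambda)>0\).
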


\begin{proof}
Let \(\lambda\) maximise \(B_M\), and evaluate \(a_i,c_i\) at the
corresponding external weights.  Put
\[
 U=a_1c_2,\qquad V=c_1a_2,
 \qquad B=U+V=B_M(\lambda).
\]
The two-sum has a feasible weighting of positive basis mass, so maximality
gives \(B>0\).  Thus
\[
 \alpha=\frac UB,\qquad \beta=\frac VB
\]
are well defined, nonnegative, and sum to \(1\), even when one of \(U,V\)
vanishes.

The aim is to complete the external weights on each factor by assigning some
weight to \(z\).  The correct two weights are forced by the proportion in
which the two terms contribute to \(B\).  Let \(s_i\) be the total
external weight on side \(i\).  Euler's identity, applied according to the
degree on that side, gives
\[
 \sum_{e\in E(N_1)\setminus z}\lambda_e\partial_eB
   =r_1B-U.
\]
Only positive coordinates contribute to the left side, and
\Cref{lem:positive-support-reduction} makes each corresponding KKT derivative
equal to \(B\).  The same calculation on side \(2\) gives
\[
 s_1=r_1-\alpha,
 \qquad s_2=r_2-\beta.
\]
Consequently, giving \(z\) weight \(\alpha\) in \(N_1\) and weight \(\beta\)
in \(N_2\) places the two completed vectors on their rank simplices.  Hence
\[
 B_{N_1}^\star\geq \alpha a_1+c_1,
 \qquad
 B_{N_2}^\star\geq \beta a_2+c_2.
\]
It remains to compare the product of these two completions with the original
two-sum value.  Their difference factors as
\[
 (\alpha a_1+c_1)(\beta a_2+c_2)-B
 =\frac{c_1c_2(B-a_1a_2)^2}{B^2}\geq0.
\]
The identity remains valid when one of \(U,V\) vanishes, and the KKT argument involved only
positive coordinates.  Thus boundary maximisers require no separate case.
\end{proof}

\begin{corollary}
\label{cor:two-sum-propagation}
If \(r_1,r_2\geq1\) and each \(N_i\) satisfies
\(B_{N_i}^\star\leq2^{r_i}/(r_i+1)\), then, for
\(r=r_1+r_2-1\),
\[
 B_{N_1\oplus_2N_2}^\star\leq\frac{2^r}{r+1}.
\]
The inequality is strict if \(r_1,r_2\geq2\).
\end{corollary}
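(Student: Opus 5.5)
By \Cref{lem:two-sum-submultiplicativity} and the hypotheses, we get
\[
 B_{N_1\oplus_2N_2}^\star\le B_{N_1}^\star B_{N_2}^\star\le\frac{2^{r_1}}{r_1+1}\cdot\frac{2^{r_2}}{r_2+1}.
\]
So the corollary reduces to the elementary comparison
\[
 \frac{2^{r_1+r_2}}{(r_1+1)(r_2+1)}\le\frac{2^{r_1+r_2-1}}{r_1+r_2},
\]
which, after dividing by \(2^{r_1+r_2-1}\), is equivalent to
\[
 2(r_1+r_2)\le(r_1+1)(r_2+1)=r_1r_2+r_1+r_2+1.
\]
Rearranging, this is
\[
 r_1+r_2-r_1r_2-1\le0,
 \qquad\text{that is,}\qquad
 -(r_1-1)(r_2-1)\le0.
\]

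This holds for all \(r_1,r_2\ge1\). Equality holds exactly when \(r_1=1\) or \(r_2=1\). When \(r_1,r_2\ge2\) we have \((r_1-1)(r_2-1)\ge1>0\), so the comparison is strict. Since the chain above consists of the lemma (a non-strict inequality), the hypotheses, and this strict arithmetic step, the final bound is strict in that case.

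There is no real obstacle. The only point to check is that \Cref{lem:two-sum-submultiplicativity} applies. This needs \(z\) to be neither a loop nor a coloop in each \(N_i\), which is part of the standing setup of the two-sum. It also needs \(r_i\ge1\), which holds automatically because \(z\) is a nonloop. We also confirm that \(r=r_1+r_2-1\ge1\), so the target constant \(c_r\) is the intended one.
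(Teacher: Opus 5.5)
Your proposal is correct and follows the paper's proof: you combine \Cref{lem:two-sum-submultiplicativity} with the hypotheses and reduce to the identity \((r_1+1)(r_2+1)-2(r_1+r_2)=(r_1-1)(r_2-1)\geq0\). As you note, this comparison is strict when \(r_1,r_2\geq2\), which gives the strict conclusion.
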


\begin{proof}
By \Cref{lem:two-sum-submultiplicativity}, it remains to compare the
constants.  With \(r=r_1+r_2-1\), the claimed inequality follows from
\((r_1+1)(r_2+1)-2(r_1+r_2)=(r_1-1)(r_2-1)\geq0\), and it is strict when
\(r_1,r_2\geq2\).
\end{proof}
\subsection{Compatible three-sum representation}
\label{sec:regular-three-sum}

The two-sum product comparison is not enough for three-sums.  If the factor
ranks are \(a,b\), the resulting rank is \(r=a+b-2\), so even a bound by
\(c_ac_b\) can exceed \(c_r\): for \(a=b=3\),
\(c_ac_b=4>c_4=16/5\).  For proper regular three-sums with \(a,b\geq3\),
we will prove the stronger comparison
\(B_M^\star<\frac{r}{r+1}B_{N_1}^\star B_{N_2}^\star\).
The saving \(r/(r+1)\) suffices because
\(\frac{r}{r+1}c_ac_b\leq c_r\).  A rank-two factor will be treated separately.

To obtain this saving, we must quantify how much basis mass is recovered
when the deleted triangle is restored to the summands.  We first choose
compatible coordinates for the common triangle, then eliminate the private
rows to leave a two-dimensional response on each side.

Let \(N_1,N_2\) be regular matroids sharing a coindependent triangle
\(T=\{t_1,t_2,t_3\}\), and let \(M=N_1\oplus_3N_2\).  Write
\(r_i=r(N_i)\).  Throughout, \(E(N_1)\cap E(N_2)=T\); a proper sum has
\(\lvert E(N_i)\rvert\geq7\), while coindependence says that \(T\) contains
no cocircuit.  Put
\[
 v_1=(1,0)^{\top},\qquad
 v_2=(0,1)^{\top},\qquad
 v_3=(1,1)^{\top},
 \qquad V=(v_1\ v_2\ v_3).
\]

\begin{lemma}
\label{lem:compatible-three-sum-representation}
After basis pivots, row permutations and sign changes, and column
reorientations, the two summands admit full-row-rank totally unimodular representations
\[
 A_i=
 \begin{pmatrix}V&P_i\\0&Q_i\end{pmatrix},
 \qquad
 i=1,2,
\]
in which the columns of \(V\) are indexed by \(t_1,t_2,t_3\).  Moreover,
\[
 A_M=
 \begin{pmatrix}
 P_1&P_2\\
 Q_1&0\\
 0&Q_2
 \end{pmatrix}
\]
is a full-row-rank totally unimodular representation of
\(M=N_1\oplus_3N_2\).  Here \(Q_i\) has \(r_i-2\) rows.
\end{lemma}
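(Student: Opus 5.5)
We build each $A_i$ from a standard binary representation adapted to $T$, pass to regular signings by Camion's uniqueness of signing, and then obtain the representation of $M$ as a totally unimodular matrix coinciding with the binary three-sum matrix.

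\emph{Coordinates for each factor.} Since $T$ is a triangle, $r_{N_i}(T)=2$. Coindependence of $T$ in $N_i$ means $E(N_i)\setminus T$ is spanning, and so $r(N_i\setminus T)=r_i$. Choose a basis $B_i$ of $N_i$ containing $\{t_1,t_2\}$. A standard binary representation $[I\mid D_i]$ with respect to $B_i$ has the form
\[
 \begin{pmatrix}V&P_i\\0&Q_i\end{pmatrix}
\]
over $\mathbb F_2$, with the rows indexed by $t_1,t_2$ placed first. Indeed, $t_3$ lies in the closure of $\{t_1,t_2\}$, and the circuit $T$ forces its fundamental column to be $(1,1,0,\dots,0)^{\top}$. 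Since $N_i$ is regular, Camion's theorem (a binary matroid has a totally unimodular signing, unique up to row and column scalings) lets us sign each standard representation to a totally unimodular matrix. Row sign changes and column reorientations then normalise the $T$-columns to exactly $V$. This works because the signed $t_3$ column is $(\pm1,\pm1,0,\dots)^{\top}$, so negating row 2 and/or the column $t_3$ yields $(1,1)$ while keeping $t_1,t_2$ as unit vectors. Full row rank is automatic since the matrix contains an identity. $Q_i$ has $r_i-2$ rows, because $B_i\setminus\{t_1,t_2\}$ indexes them.

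\emph{The three-sum.} By Seymour's definition, $M=N_1\oplus_3N_2$ is the binary matroid on the symmetric difference $E(N_1)\triangle E(N_2)$ whose cycles are the symmetric differences of cycles of $N_1$ and $N_2$. Equivalently, it is obtained from the binary generalised parallel connection along $T$ by deleting $T$. Stacking along the shared $V$-rows gives the matrix
\[
 \begin{pmatrix}V&P_1&P_2\\0&Q_1&0\\0&0&Q_2\end{pmatrix},
\]
which represents the generalised parallel connection over $\mathbb F_2$ (the modular flat $T$ is spanned by the common first two rows). Deleting the $V$-columns gives $A_M$ mod 2. Its rank is $2+(r_1-2)+(r_2-2)=r_1+r_2-2$, which matches the rank of a three-sum. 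Full row rank follows because $E(N_i)\setminus T$ spans $N_i$, so the columns of $A_M$ span the first two coordinates together with both private blocks.

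\emph{Total unimodularity (the main obstacle).} We must show that the real matrix $A_M$, with the signs chosen independently on the two sides, is totally unimodular. The plan is as follows. Since $M$ is regular (Seymour's three-sum of regular matroids is regular), Camion's theorem implies that any two real signings of the same binary support that are both totally unimodular differ only by scalings. It therefore suffices to show that $A_M$ is \emph{some} valid signing. We use the characterisation of Camion/Truemper: a $\{0,\pm1\}$ matrix with the right support is a totally unimodular signing if and only if every square submatrix whose bipartite support graph is a chordless cycle has entry sum $\equiv 0\pmod 4$. A chordless cycle either lies within one side (so it is a submatrix of $A_1$ or $A_2$ and fine), or crosses between the two sides through the two shared rows. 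In the crossing case, the row of $V$-coordinates on each side can be completed using the $t$-columns: replace the crossing by a path through $t_1,t_2$, or $t_3$ in each $A_i$. We then check the mod-$4$ condition by adding the two side contributions, using that $V$ itself has the signing fixed compatibly on both sides. This gluing check, in particular the crossing case that uses both shared rows, is where the care lies. If it resists, we can fall back on Seymour's own proof that three-sums of totally unimodular matrices in this block form are totally unimodular (\cite[Section~8]{Seymour1980}), which is exactly this statement.
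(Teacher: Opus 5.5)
Your construction of the factor representations, the identification of $A_M \bmod 2$ with the three-sum, and the full-row-rank argument are sound. (The paper pivots an existing totally unimodular representation rather than Camion-signing a binary one, and it checks the three-sum on cycles rather than via the generalised parallel connection, but these are interchangeable. Note only that negating row~$2$ also forces a reorientation of $t_2$.) The gap is in the total unimodularity of $A_M$, which is the substance of the lemma. The chordless-cycle criterion certifies a signing only when the binary support is already known to admit \emph{some} totally unimodular signing. Camion's argument shows that two signings satisfying the mod-$4$ condition agree up to scaling, so it can transfer total unimodularity but cannot create it. Regularity of the represented matroid supplies such a signing for a standard-form matrix $[I\mid D]$, but not for an arbitrary binary representation, and $A_M \bmod 2$ is in general not of that form. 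For example, the $0/1$ matrix with rows $(1,1,1,1)$, $(1,1,0,0)$, $(1,0,1,0)$, $(1,0,0,1)$ represents the regular matroid $U_{3,4}$ over $\mathbb F_2$, and its only chordless cycles are three all-positive $4$-cycles; yet its determinant is $-2$. So the step ``it therefore suffices to show that $A_M$ is some valid signing'' is circular, since uniqueness of TU signings gives no existence. In addition, the crossing case is left unverified, and the fallback to Seymour's paper defers the key step rather than proving it.

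Your route could be completed only with an extra input. The stacked matrix with the $V$-columns restored is in standard form, with the identity on $\{t_1,t_2\}\cup(B_1\setminus T)\cup(B_2\setminus T)$. So regularity of the generalised parallel connection $P_T(N_1,N_2)$, not merely of $M$, would make the support of $A_M$ signable. A crossing chordless cycle then consists of two paths joining the two root rows. Each path is closed by the $t_3$ column into a chordless cycle of $A_i$, so each half has entry sum $\equiv 2 \pmod 4$ and the total is $\equiv 0$. The paper needs no regularity input at all. For a square submatrix $H$ of $A_M$, it compares chosen private rows with chosen columns on each side. If one side has no column excess, $\det H$ factors as a product of minors of $A_1$ and $A_2$. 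In the only coupled case (both root rows chosen, excess one on each side), $\det H=\pm\det(c_1,c_2)$. Since $c_i^1$, $c_i^2$ and $c_i^1+c_i^2$ are minors of $A_i$, each $c_i$ lies in $\{0,\pm v_1,\pm v_2,\pm(v_1-v_2)\}$, whose pairwise determinants are $0$ or $\pm1$. That direct minor computation is the idea your proposal is missing, and it yields regularity of $M$ as a by-product rather than requiring it.
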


\begin{proof}
The two copies of \(T\) must first be made identical over the integers, not
merely equivalent over \(\mathbb F_2\).  For
each \(i\), start with a totally unimodular representation
\(\widetilde A_i\), extend \(\{t_1,t_2\}\) to a basis, and let \(U_i\) be the
corresponding basis minor.  Then \(\det U_i=\pm1\), so
\(U_i^{-1}\widetilde A_i\) is an integral representation whose chosen basis
columns form the identity.  We check that this change of basis preserves
total unimodularity.  Its maximal minors differ from those of
\(\widetilde A_i\) only by the factor \(\det(U_i^{-1})=\pm1\).  For a smaller minor,
either one of its chosen identity columns is supported outside the chosen
rows, in which case the minor is zero, or we may adjoin the remaining
identity columns and obtain a maximal minor up to sign.  Thus the new
representation is again totally unimodular.

In these basis coordinates, the first two columns indexed by \(T\) are
\(e_1,e_2\).  Since \(T\) is a circuit, the third column lies in their span
and both coefficients are nonzero.  Total unimodularity makes these
coefficients \(\pm1\).  After row and column sign changes, both summands
therefore have the same root block
\[
 V=(e_1\ e_2\ e_1+e_2).
\]

The external columns must still retain the full rank of each summand.
Coindependence gives \(r(N_i\setminus T)=r_i\), since \(T\) contains no
cocircuit.  Hence
\(E_i:=\binom{P_i}{Q_i}\) has full row rank.

With the common root block fixed, reduction modulo two identifies the
matroid represented by \(A_M\).  For
\(X_i\subseteq E(N_i)\setminus T\), let \(\chi_i\) be its incidence vector.
Then
\[
 A_M\binom{\chi_1}{\chi_2}=0
 \quad\Longleftrightarrow\quad
 Q_i\chi_i=0\ (i=1,2),\qquad P_1\chi_1=P_2\chi_2.
\]
Because \(V:\mathbb F_2^3\to\mathbb F_2^2\) is onto, these conditions
are equivalent to the existence of \(\zeta\in\mathbb F_2^3\) such that
\(A_i\binom{\zeta}{\chi_i}=0\) for \(i=1,2\).  Thus the cycles represented
by \(A_M\) are the symmetric differences of cycles of \(N_1\) and
\(N_2\) having the same trace on \(T\); this is the cycle definition of
\(N_1\oplus_3N_2\).

This binary identification does not by itself show that the displayed real
matrix is totally unimodular.  To prove that, let \(H\) be a square
submatrix of \(A_M\), and let \(s\leq2\)
be the number of its chosen root rows.  Most such minors factor immediately
along one side.  To see which do not, let \(a_i\) be the
number of chosen private rows and \(b_i\) the number of chosen columns.  If
\(b_i<a_i\), the private rows on that side are supported on too few columns,
so \(\det H=0\).  We may therefore suppose \(b_i\geq a_i\), and write
\[
 \delta_i=b_i-a_i.
\]
For the selected rows and columns on side \(i\), write \(S_i\) and \(R_i\)
for the corresponding parts of \(P_i\) and \(Q_i\).  Since \(H\) is square,
\(\delta_1+\delta_2=s\).  Suppose, for example, that \(\delta_1=0\).
Expansion along the \(a_1=b_1\) private rows on side~1 has only one possible
column set, namely all chosen side-1 columns.  Hence
\[
 \det H=\pm\det R_1\det\binom{S_2}{R_2}.
\]
The two factors are square minors of \(A_1\) and \(A_2\), respectively, so
\(\det H\in\{0,\pm1\}\).  The same argument applies when \(\delta_2=0\).
We are left with only one possibility:
\[
 s=2,\qquad \delta_1=\delta_2=1.
\]

For each side, define
\[
 c_i^j:=\det\begin{pmatrix}v_j&S_i\\0&R_i\end{pmatrix},
 \qquad j=1,2.
\]
An expansion along the private rows gives
\(\det H=\pm\det(c_1,c_2)\), where
\(c_i=(c_i^1,c_i^2)^{\top}\).  The only possible obstruction would be
a pair of such vectors with determinant of absolute value greater than one.
Total unimodularity of the summands rules this out.  Adjoining each of the
root columns \(v_1,v_2\), and \(v_3=v_1+v_2\) in turn shows that
\[
 c_i^1,\qquad c_i^2,\qquad c_i^1+c_i^2
\]
are all minors of \(A_i\).  Each therefore belongs to \(\{0,\pm1\}\), which
leaves only
\[
 c_i\in\{0,\ \pm v_1,\ \pm v_2,\ \pm(v_1-v_2)\}.
\]
Any two vectors in this set have determinant \(0\) or \(\pm1\).  Hence
\(\det H\in\{0,\pm1\}\) also in the coupled case.  We conclude that every
square minor of \(A_M\) is a sign or zero, so \(A_M\) is totally unimodular.
The same argument covers an empty private block by the usual convention for
the empty minor.  This is the full-row form of the usual matrix three-sum;
for the corresponding compact-matrix rule and compatible signing convention,
see \cite[\S11.5, Equations~(11.5.5)--(11.5.7) and Lemma~(11.5.8),
pp.~262--264]{Truemper1998}.

It remains only to verify full row rank.  If a row vector
\((u,\alpha_1,\alpha_2)\) annihilates \(A_M\), then
\((u,\alpha_i)E_i=0\) for \(i=1,2\); the full row rank of \(E_i\) forces
\(u=\alpha_1=\alpha_2=0\).  Hence \(A_M\) has row rank
\(r_1+r_2-2=r(M)\).
\end{proof}

The lemma settles the combinatorial gluing.  For the optimisation argument,
we need only the part of each external Gram matrix that is visible from the
two common rows.  Fix the representations supplied by
\Cref{lem:compatible-three-sum-representation}.  For
\(x\in\R_{\geq0}^3\), put
\[
 \mathsf M(x)=\sum_{j=1}^3x_jv_jv_j^{\top}.
\]
Then
\begin{equation}
 \det\mathsf M(x)=e_2(x):=x_1x_2+x_1x_3+x_2x_3.
 \label{eq:triangle-cone-determinant}
\end{equation}

\subsection{Response factorisation}

The private rows prevent the two side Gram matrices from being added
directly.  Their Schur complements remove those rows while preserving the
determinant.  The point of the next lemma is that total unimodularity forces
each resulting \(2\times2\) matrix to lie in the three-direction cone
generated by the triangle.

\begin{lemma}
\label{lem:three-sum-schur-response}
On a support face whose active external columns span \(M\), let
\(\lambda^{(i)}\) denote the external weighting on side \(i\), extended by
zero off the active support.  There are \(d_i>0\) and
\(x_i\in\R_{\geq0}^3\) such that
\begin{align}
 B_M(\lambda)&=d_1d_2e_2(x_1+x_2),
 \label{eq:three-sum-combined-response}\\
 B_{N_i}(\lambda^{(i)},w)&=d_i e_2(x_i+w)
 \label{eq:three-sum-completed-response}
\end{align}
for every nonnegative triangle-weight vector \(w\).
\end{lemma}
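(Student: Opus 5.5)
We plan to work directly with the Gram-determinant formula of \Cref{lem:regular-gram-calculus}, applied to the compatible representations of \Cref{lem:compatible-three-sum-representation}. For side \(i\), let \(\Lambda_i=\diag(\lambda^{(i)})\) and form the external Gram blocks
\[
 G_i=\begin{pmatrix}P_i\Lambda_iP_i^{\top}&P_i\Lambda_iQ_i^{\top}\\ Q_i\Lambda_iP_i^{\top}&Q_i\Lambda_iQ_i^{\top}\end{pmatrix}
 =:\begin{pmatrix}X_i&Y_i\\ Y_i^{\top}&Z_i\end{pmatrix}.
\]
Then the Gram matrix of \(A_M\) has the arrow shape with \(X_1+X_2\) in the root block, \(Y_1,Y_2\) as couplings, and \(\operatorname{diag}(Z_1,Z_2)\) in the private block. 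The Gram matrix of \(A_i\) with triangle weights \(w\) is \(G_i+\mathsf M(w)\oplus0\).

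We first take \(d_i:=\det Z_i\) and show it is positive. The support hypothesis says that the active external columns span \(M\). This should give that the active columns of \(Q_i\) have full row rank \(r_i-2\): a basis of \(M\) inside the support, split by sides, yields rows of \(Q_i\) spanned by its side-\(i\) columns, because \(A_M\) is block triangular in the private rows. Hence \(Z_i\) is positive definite, and \(d_i>0\). We then take Schur complements
\[
 S_i:=X_i-Y_iZ_i^{-1}Y_i^{\top}.
\]
The Schur determinant formula gives \(\det(\text{Gram of }A_i)=d_i\det(S_i+\mathsf M(w))\). For \(A_M\) the private block is block diagonal, so its Schur complement is \(S_1+S_2\), and hence \(B_M=d_1d_2\det(S_1+S_2)\). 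Both identities thus reduce to the cone claim: each \(S_i\) equals \(\mathsf M(x_i)\) for some \(x_i\ge0\). Then \(\det\mathsf M(x)=e_2(x)\) by \eqref{eq:triangle-cone-determinant}, and \(\mathsf M\) is linear, which yields \eqref{eq:three-sum-combined-response} and \eqref{eq:three-sum-completed-response}.

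The main obstacle is the cone claim. Since \(\mathsf M(x)\) has off-diagonal entry \(x_3\) and diagonal entries \(x_1+x_3\), \(x_2+x_3\), it is equivalent to \(S_i\) being a symmetric \(2\times2\) matrix that is diagonally dominant with nonnegative off-diagonal entry: \(x_3=(S_i)_{12}\ge0\) and \(x_j=(S_i)_{jj}-(S_i)_{12}\ge0\). We propose to identify \(S_i\) by Cauchy--Binet, applied to \(\det(S_i+\mathsf M(w))\cdot d_i\). This determinant is the basis polynomial of \(N_i\), which is multiaffine in \(w\). Its coefficient of \(w_j\) equals \(d_i\,v_j^{\top}\operatorname{adj}(S_i)v_j\), and this is also the weighted count of bases of \(N_i\) containing \(t_j\) and no other triangle element. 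Those counts are nonnegative, as is the constant term \(d_i\det S_i\). More directly, \(\operatorname{adj}(S_i)\) has entries that are nonnegative combinations of squared minors, namely the \(c_i\)-vectors from the proof of \Cref{lem:compatible-three-sum-representation}. Expanding \(S_i\) as \(\sum\) over external sets of the outer products \(c c^{\top}\), with \(c\in\{0,\pm v_1,\pm v_2,\pm(v_1-v_2)\}\), should exhibit \(S_i\) as a nonnegative combination of \(v_1v_1^{\top}\), \(v_2v_2^{\top}\), and \((v_1-v_2)(v_1-v_2)^{\top}\). The third generator has the wrong sign pattern, so we may need a reorientation (swapping the role of \(v_3\) with \(v_1-v_2\) by a sign change of one root row), or we may need to express things through \(\operatorname{adj}\).

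Concretely, the key step to verify is the generalized Cauchy--Binet formula
\[
 d_iS_i=\sum_{F}\Big(\prod_{e\in F}\lambda_e\Big)\,c_Fc_F^{\top}.
\]
Here \(F\) ranges over \((r_i-1)\)-subsets of external columns, and \(c_F\) is the vector of \(2\times\)-bordered minors with \(v_1^{\perp},v_2^{\perp}\). Using this, we would read off \(x_i\) as nonnegative weighted counts of the three types of \(c_F\), after fixing orientations so that the three types are exactly \(v_1,v_2,v_3\) up to sign. If that orientation bookkeeping fails, the fallback is to compare with the multiaffine expansion of \(B_{N_i}(\lambda^{(i)},w)\) in \(w\). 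That expansion forces \(\det(S_i+\mathsf M(w))\) to be an affine function of \(w_1,w_2,w_3\) whose linear coefficients are nonnegative, and to have a quadratic part equal to \(e_2(w)\). Matching the linear coefficients with \(\partial_{w_j}e_2(x+w)|_{w=0}\) then determines \(x\) and gives its sign.
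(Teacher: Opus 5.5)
Your reduction is correct and matches the paper's. You show \(d_i=\det Z_i>0\), the Schur complements give \(B_{N_i}(\lambda^{(i)},w)=d_i\det(S_i+\mathsf M(w))\) and \(B_M(\lambda)=d_1d_2\det(S_1+S_2)\), and the lemma reduces to \(S_i=\mathsf M(x_i)\) with \(x_i\geq0\). The gap is that neither of your routes proves \(x_i\geq0\), which is the real content of the lemma.

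In the fallback, write \(C_j=d_i\,v_j^{\top}\operatorname{adj}(S_i)v_j\) for the coefficient of \(w_j\). Matching it with the coefficient of \(w_j\) in \(d_ie_2(x_i+w)\) gives \(x_{ij}=(C_k+C_l-C_j)/(2d_i)\) for \(\{j,k,l\}=\{1,2,3\}\). But \(C_j\geq0\) holds automatically for every positive-semidefinite \(S_i\), so it says nothing about the sign of \(x_i\). For example, \((v_1-v_2)(v_1-v_2)^{\top}\) gives \(v_j^{\top}\operatorname{adj}(\cdot)v_j=1,1,4\) but has off-diagonal entry \(x_3=-1\). What you need is the triangle inequality \(C_j\leq C_k+C_l\), and that requires a combinatorial input your proposal does not supply. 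Here it is: if \(F\cup\{t_j\}\) is a basis of \(N_i\) for an external \((r_i-1)\)-set \(F\), then \(\operatorname{cl}(F)\) cannot contain both \(t_k\) and \(t_l\), because \(T\) is a circuit. So every such \(F\) also contributes to \(C_k\) or \(C_l\). The paper uses a sharper version of this. It classifies each \(F\) by the line in which its span meets the root plane. It then uses total unimodularity and \(v_3=v_1+v_2\) to rule out any line other than \(\langle v_1\rangle,\langle v_2\rangle,\langle v_3\rangle\), and notes that \(F\cup\{t_k\}\) is a basis exactly when \(k\neq j\). This yields coefficients \(G_{i2}+G_{i3}\), \(G_{i1}+G_{i3}\), \(G_{i1}+G_{i2}\) with \(G_{ij}\geq0\), and hence \(x_{ij}=G_{ij}/d_i\).

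Your main route breaks at the same point. With root columns \(v_1,v_2,v_1+v_2\), the vectors \(c_F=(\det[t_1,F],\det[t_2,F])\) from \Cref{lem:compatible-three-sum-representation} do lie in \(\{0,\pm v_1,\pm v_2,\pm(v_1-v_2)\}\). However, Cauchy--Binet applied to the bordered determinants gives \(\sum_F\lambda_Fc_Fc_F^{\top}=d_i\operatorname{adj}(S_i)\), not \(d_iS_i\), so the identity you propose to verify is false as written. This is exactly your ``wrong sign pattern''. For \(2\times2\) matrices, \(\operatorname{adj}\) is conjugation by a quarter turn \(J\). Since \(Jv_1=-v_2\), \(Jv_2=v_1\) and \(J(v_1-v_2)=-v_3\), it carries that cone onto the triangle cone.

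Equivalently, \(d_iS_i=\sum_F\lambda_Fu_Fu_F^{\top}\), where \(u_F\) lists the \((r_i-1)\)-minors of \(A_i\) on columns \(F\) using one root row and all private rows. Adjoining \(t_1,t_2,t_3\) to \(F\) gives the full minors \(u_{F,2}\), \(-u_{F,1}\) and \(u_{F,2}-u_{F,1}\), all in \(\{0,\pm1\}\). This forces \(u_F\in\{0,\pm v_1,\pm v_2,\pm v_3\}\), which is the paper's exclusion of a fourth direction. Done this way, your final paragraph would be a valid and slightly more direct alternative to the paper's coefficient matching. As submitted, though, the identity is misstated, the range of the vectors is left unverified, and the sign of \(x_i\) is never settled.
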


\begin{proof}
Separate the common rows from the private rows, and put
\(\Lambda_i=\diag(\lambda^{(i)})\).  The external Gram matrix on
side \(i\) is
\[
 \Gamma_i=
 \begin{pmatrix}P_i\\Q_i\end{pmatrix}
 \Lambda_i
 \begin{pmatrix}P_i^{\top}&Q_i^{\top}\end{pmatrix}
 =
 \begin{pmatrix}G_i&C_i\\ C_i^{\top}&D_i\end{pmatrix}.
\]
The active external columns span \(M\), so their combined Gram matrix
\[
 \begin{pmatrix}
  G_1+G_2&C_1&C_2\\
  C_1^{\top}&D_1&0\\
  C_2^{\top}&0&D_2
 \end{pmatrix}
\]
is positive definite.  Hence its private principal blocks satisfy
\(D_i\succ0\), with the empty-block convention when \(r_i=2\).  Define
\[
 d_i=\det D_i,
 \qquad
 H_i=G_i-C_iD_i^{-1}C_i^{\top}.
\]
At this point \(H_i\) is merely a positive-semidefinite matrix on the root
plane.  To express it in the triangle cone, classify the external sets
according to the line in which their span meets that plane.  For
\(\lambda_S:=\prod_{e\in S}\lambda_e\), let \(G_{ij}\) be the sum of
\(\lambda_S\) over independent external \((r_i-1)\)-sets \(S\) whose span
meets the root plane in \(\langle v_j\rangle\).  Total unimodularity makes
each nonzero squared basis minor equal to one.  Thus \(G_{ij}\) equals the
corresponding Cauchy--Binet contribution.

There can be no fourth intersection direction.  To see this, fix an ordering
of a contributing set \(S\), append \(t_k\) last, and put
\[
 \Delta_k=\det A_i[S,t_k].
\]
Then \((\Delta_2,-\Delta_1)\) spans the intersection with the root plane.
Total unimodularity gives \(\Delta_k\in\{0,\pm1\}\), while
\(v_3=v_1+v_2\) gives \(\Delta_3=\Delta_1+\Delta_2\).  The sign pattern
\((1,-1)\) for the generator would force
\(\Delta_1=\Delta_2=\pm1\), and hence \(|\Delta_3|=2\), a contradiction.
The line is therefore one of
\(\langle v_1\rangle,\langle v_2\rangle,\langle v_3\rangle\).  Moreover,
\(S\cup\{t_k\}\) is a basis if and only if \(k\ne j\).  It follows that the
coefficients of \(w_1,w_2,w_3\) are, respectively,
\[
 G_{i2}+G_{i3},\qquad G_{i1}+G_{i3},\qquad G_{i1}+G_{i2}.
\]
The Schur-complement expansion gives the same coefficients as
\[
 d_i(H_i)_{22},\qquad d_i(H_i)_{11},\qquad
 d_i\bigl((H_i)_{11}+(H_i)_{22}-2(H_i)_{12}\bigr).
\]
Equating the two expressions gives
\[
 d_iH_i=
 \begin{pmatrix}
  G_{i1}+G_{i3}&G_{i3}\\
  G_{i3}&G_{i2}+G_{i3}
 \end{pmatrix}.
\]
Hence \(H_i=\mathsf M(x_i)\), where \(x_{ij}=G_{ij}/d_i\geq0\).
Apart from the scalar factor \(d_i\), the information from side \(i\) that
is visible at the separator has been reduced to the three nonnegative
numbers \(x_i\).  Taking the Schur complement of both private blocks gives
\[
 B_M(\lambda)=d_1d_2\det(H_1+H_2)
              =d_1d_2e_2(x_1+x_2).
\]
Restoring triangle weights \(w\) on side \(i\) replaces \(H_i\) by
\(H_i+\mathsf M(w)\), and hence
\[
 B_{N_i}(\lambda^{(i)},w)
 =d_i\det\bigl(H_i+\mathsf M(w)\bigr)
 =d_ie_2(x_i+w).
\]
This proves \eqref{eq:three-sum-combined-response} and
\eqref{eq:three-sum-completed-response}.
\end{proof}

The factorisation describes the response of each summand.  To compare the completed
summands with the original sum, we must also relate those responses to the
amount of weight carried by each side.  The next proposition extracts this
relation from the optimality equations and then uses total unimodularity to
bound the total response.

\begin{proposition}
\label{prop:three-sum-response-package}
Let \(a=r(N_1)\), \(b=r(N_2)\), and \(r=r(M)=a+b-2\).  Let \(\lambda\)
attain \(B_M^\star\), and put \(F=\{e:\lambda_e>0\}\).  Restriction to
\(F\) gives responses \(x,y\in\R_{\geq0}^3\), private determinants
\(d_1,d_2>0\), and external side masses
\[
 m_i=\sum_{e\in F\cap(E(N_i)\setminus T)}\lambda_e.
\]
Put \(s=x+y\).  Then \(e_2(s)>0\).  If
\[
 q=\nabla\log e_2(s),
 \qquad
 \ell_1=x\cdot q,
 \qquad
 \ell_2=y\cdot q,
\]
then
\begin{equation}
 m_1+m_2=r,
 \qquad
 a-m_1=\ell_2,
 \qquad
 b-m_2=\ell_1,
 \label{eq:three-sum-mass-dictionary}
\end{equation}
and
\begin{equation}
 |x|\leq m_1,
 \qquad
 |y|\leq m_2.
 \label{eq:three-sum-trace-caps}
\end{equation}
Here \(|z|=z_1+z_2+z_3\).  In particular, \(|s|\leq r\).
\end{proposition}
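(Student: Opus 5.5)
The plan is to get both groups of statements from \Cref{lem:three-sum-schur-response}, applied on the support \(F\), together with homogeneity counts and the KKT condition of \Cref{lem:positive-support-reduction}.

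First, positivity and the homogeneity of the pieces. Since \(\lambda\) is a maximiser and the sum \(M\) has a feasible weighting of positive mass, \(B_M(\lambda)>0\). Then \(d_1d_2>0\) and \eqref{eq:three-sum-combined-response} give \(e_2(s)>0\), so \(q\) is well defined. Next I record degrees in the side-1 external weights (side 2 is symmetric). The number \(d_1=\det D_1\) is a Cauchy--Binet sum over \((a-2)\)-subsets, so it is homogeneous of degree \(a-2\). Each \(d_1x_{1j}=G_{1j}\) is a sum over \((a-1)\)-subsets, so \(x\) is homogeneous of degree \(1\). The vectors \(d_2\) and \(y\) do not depend on side-1 weights.

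Second, the mass dictionary. Differentiate \(B_M=d_1d_2e_2(x+y)\) along the scaling \(\lambda^{(1)}\mapsto t\lambda^{(1)}\) at \(t=1\). Euler's identity gives
\[
 \sum_{e\in F\cap(E(N_1)\setminus T)}\lambda_e\partial_eB_M=(a-2)B_M+B_M\,x\cdot q=(a-2+\ell_1)B_M .
\]
By \Cref{lem:positive-support-reduction}(iv), each active partial derivative equals \(B_M\), so the left side is \(m_1B_M\). Hence \(m_1=a-2+\ell_1\), and likewise \(m_2=b-2+\ell_2\). Because \(e_2\) is homogeneous of degree two, \(\ell_1+\ell_2=s\cdot q=2\). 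Substituting this gives \(a-m_1=\ell_2\) and \(b-m_2=\ell_1\). Adding the two relations gives \(m_1+m_2=a+b-2=r\), which is also the total mass, since \(T\) is not in \(E(M)\).

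Third, the trace caps, which I expect to be the main obstacle; I would argue combinatorially from the Cauchy--Binet description of \(G_{1j}\). Let \(S\) be a set counted in some \(G_{1j}\): an independent external \((a-1)\)-set whose span meets the root plane in a line. Then the projection of \(S\) onto the private rows \(Q_1\) has rank \(a-2\). So \(S\) contains an \((a-2)\)-set \(R\) that is a basis of the \(Q_1\)-columns, and \(S=R\cup\{e\}\) for a single extra external element \(e\). Also, each \(S\) lies in at most one \(G_{1j}\). Total unimodularity makes every nonzero squared minor equal to one, so \(d_1=\sum_R\lambda_R\) over such \(R\). Charging each \(S\) to one decomposition \((R,e)\) then gives
\[
 d_1|x|=\sum_j G_{1j}\le\sum_R\lambda_R\sum_{e}\lambda_e=d_1m_1,
\]
where \(e\) ranges over the active side-1 external elements. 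Therefore \(|x|\le m_1\), and similarly \(|y|\le m_2\). Finally, \(|s|=|x|+|y|\le m_1+m_2=r\). The delicate point is checking that the rank of the \(Q_1\)-projection is exactly \(a-2\), so that the decomposition \(S=R\cup\{e\}\) exists. This follows because \(\operatorname{span}S\) has dimension \(a-1\) and meets the two-dimensional root plane, which is the kernel of the projection, in exactly one dimension.
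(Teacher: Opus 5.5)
Your proof is correct. The positivity step and the mass dictionary follow the paper's route. The paper also rescales the active side-\(i\) weights, uses \(d_i\mapsto\theta^{r_i-2}d_i\) and \(H_i\mapsto\theta H_i\), and reads off \(m_i=r_i-2+\operatorname{tr}(H_i(H_1+H_2)^{-1})\). That is your \(m_1=a-2+\ell_1\), written through the Schur complements.

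The trace caps are where you genuinely differ. The paper argues by a linear-algebraic comparison in three steps. First, total unimodularity restricts the top part \(u_e\) of each external column to \(0,\pm v_1,\pm v_2,\pm v_3\), because its entries and its \(2\times2\) determinants with the root columns are minors. Second, the fixed form \(P_0\), with diagonal entries \(1\) and off-diagonal entries \(-\tfrac12\), takes the value one on each of these directions and satisfies \(\operatorname{tr}(P_0\mathsf M(z))=|z|\). Third, the Schur complement satisfies \(H\preceq G\), which gives \(|z|=\operatorname{tr}(P_0H)\le\operatorname{tr}(P_0G)=\sum_e\lambda_eu_e^{\top}P_0u_e\le m\).

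You instead keep the Cauchy--Binet description of \(G_{1j}\) and \(d_1\) from \Cref{lem:three-sum-schur-response}. You then prove \(\sum_jG_{1j}\le d_1m_1\) by the injective charging \(S\mapsto(R,e)\). Your rank count is exactly what makes this work: the span of \(S\) meets the kernel of the private projection in one dimension, so \(Q_1\) restricted to \(S\) has rank \(a-2\). The resulting inequality even holds monomial by monomial.

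Your route avoids the classification of top parts and the Loewner comparison. It uses total unimodularity only through the unit squared minors of \(Q_1\), beyond what is already built into the lemma. The paper's route is coordinate-free, as its subsequent remark notes, and works directly on \(H\). It also shows where the slack comes from: external columns with zero top part, and the correction \(CD^{-1}C^{\top}\).
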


\begin{proof}
Since \(B_M^\star>0\), the active support \(F\) contains a basis.  Its
weighted Gram matrix is therefore positive definite, as are the private
principal blocks \(D_1,D_2\).  Hence
\Cref{lem:three-sum-schur-response} applies on the support face, and its
factorisation together with \(B_M^\star>0\) gives \(e_2(s)>0\).

The KKT equations turn a uniform rescaling of one side into the required
mass identities.  They give
\(\partial_e\log B_M(\lambda)=1\) for \(e\in F\).  Scale the positive
side-\(i\) coordinates by \(\theta\).  Then
\[
 d_i(\theta)=\theta^{r_i-2}d_i,
 \qquad
 H_i(\theta)=\theta H_i;
\]
the private block has dimension \(r_i-2\).  Differentiation at
\(\theta=1\) therefore yields
\[
 m_i=r_i-2+
 \operatorname{tr}\!\left(H_i(H_1+H_2)^{-1}\right).
\]
The two traces sum to \(2\).  By
\eqref{eq:triangle-cone-determinant}, they equal \(x\cdot q\) and
\(y\cdot q\).  Every element of \(M\) lies on one of the two external
sides, so \(m_1+m_2=r\).  These identities prove
\eqref{eq:three-sum-mass-dictionary}.
In particular, completing side~1 to rank \(a\) requires mass \(\ell_2\),
and completing side~2 to rank \(b\) requires mass \(\ell_1\).

The mass identities alone do not control the size of \(s\).  That missing
bound comes from the restricted root directions of a totally unimodular
matrix.  Fix one rooted summand and write
\[
 H=G-CD^{-1}C^{\top}=\mathsf M(z)
\]
for its response.  Let \(u_e\in\R^2\) be the top part of an external
column, and put \(m=\sum_e\lambda_e\), the total external mass on this side.
The entries of \(u_e\), and its determinants with the root columns, are
minors of the totally unimodular representation.  Thus, up to sign,
\[
 u_e\in\{0,v_1,v_2,v_3\}.
\]
A single quadratic form assigns value one to each of the three nonzero
directions.  Namely, take
\[
 P_0=
 \begin{pmatrix}1&-\tfrac12\\-\tfrac12&1\end{pmatrix}\succ0.
\]
Each nonzero allowed column satisfies \(u_e^{\top}P_0u_e=1\).  Since
\(H\preceq G\),
\[
 |z|=\operatorname{tr}(P_0H)
 \leq\operatorname{tr}(P_0G)
 =\sum_e\lambda_eu_e^{\top}P_0u_e
 \leq m.
\]
We apply this estimate to both summands and obtain
\eqref{eq:three-sum-trace-caps}.
\end{proof}

\begin{remark}
The bound \(|z|\leq m\) has the coordinate-free form
\(\operatorname{tr}(P_0H)\leq m\).  We nevertheless retain the three
coordinates because optimal triangle completion depends on their individual
values.
\end{remark}

\subsection{Optimising the triangle separator}

The triangle is absent from \(M\), but it is available when either summand is
completed.  For a fixed response \(z\) and a prescribed amount \(m\) of
triangle weight, the relevant question is how large the completed
determinant can be.  Accordingly, for \(z\geq0\) and \(m\geq0\), define
\begin{equation}
 \phi(z,m)=
 \max\{e_2(z+w):w\geq0,\ |w|=m\}.
 \label{eq:triangle-completion-definition}
\end{equation}
The maximiser is obtained by adding weight to the smallest coordinates until
the active coordinates are equal.

\begin{lemma}
\label{lem:triangle-water-filling}
Let \(z_{(1)}\geq z_{(2)}\geq z_{(3)}\) be the decreasing rearrangement of
\(z\), and put
\[
 L=|z|+m,
 \qquad
 t=\frac{L-z_{(1)}}2.
\]
Then
\[
\phi(z,m)=
\begin{cases}
 L^2/3
  & \text{if }L/3\geq z_{(1)}\\[1mm]
 2z_{(1)}t+t^2
  & \text{if }L/3<z_{(1)}\ \text{and}\ t\geq z_{(2)}\\[1mm]
 e_2\!\left(z_{(1)},z_{(2)},L-z_{(1)}-z_{(2)}\right)
  & \text{if }t<z_{(2)}
\end{cases}.
\]
\end{lemma}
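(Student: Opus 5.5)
Write \(y=z+w\), so the problem becomes maximising \(e_2(y)\) over \(y\ge z\) with \(|y|=L\).  Since \(e_2(y)=\tfrac12(L^2-\|y\|^2)\) on this slice, it is equivalent to minimising \(\|y\|^2\) over \(\{y\ge z,\ |y|=L\}\).  This is the Euclidean projection of the point \((L/3)\one\) onto a convex polyhedron, so the minimiser is unique.  It is characterised by the KKT conditions: there is a level \(\tau\) with \(y_j=\max(z_j,\tau)\) and \(\sum_j\max(z_j,\tau)=L\).  This is the water-filling description announced before the statement.  The map \(\tau\mapsto\sum_j\max(z_j,\tau)\) is continuous and strictly increasing for \(\tau\ge z_{(3)}\), and at \(\tau=z_{(3)}\) it equals \(|z|\le L\), so the level exists and is unique.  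Rather than invoke KKT abstractly, I would verify optimality directly by an exchange argument.  If \(y_i>z_i\) and \(y_i>y_j\), then moving a small amount from coordinate \(i\) to coordinate \(j\) stays feasible and strictly decreases \(\|y\|^2\).  Hence at the optimum every coordinate that received weight equals the minimum level among those at least as large.  This yields exactly the form \(y_j=\max(z_j,\tau)\).

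The cases are then read off by counting how many coordinates are raised.  If \(\tau\ge z_{(1)}\), all three coordinates equal \(\tau=L/3\), which happens exactly when \(L/3\ge z_{(1)}\).  The value is then \(e_2=3(L/3)^2=L^2/3\).  If only \(z_{(1)}\) exceeds the level, then \(z_{(1)}+2\tau=L\), so \(\tau=t\).  Consistency requires \(t\ge z_{(2)}\) and \(t<z_{(1)}\), and the second condition is equivalent to \(L/3<z_{(1)}\).  The value is \(e_2(z_{(1)},t,t)=2z_{(1)}t+t^2\).  Otherwise both \(z_{(1)}\) and \(z_{(2)}\) stay fixed, and the third coordinate becomes \(L-z_{(1)}-z_{(2)}\).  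This occurs precisely when \(t<z_{(2)}\), and it gives the third formula.  In that last case the third coordinate must also be at least \(z_{(3)}\), which holds automatically because \(m\ge0\).

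It remains to check that the three conditions partition all cases and match the level description.  \(t<z_{(2)}\) implies \(L-z_{(1)}<2z_{(2)}\le 2z_{(1)}\), so \(L/3<z_{(1)}\).  Conversely, \(L/3\ge z_{(1)}\) gives \(t\ge z_{(1)}\ge z_{(2)}\).  So the conditions are mutually exclusive and exhaustive, and they agree with the position of \(\tau\) relative to \(z_{(1)}\) and \(z_{(2)}\).  This follows from monotonicity of \(\tau\mapsto\sum_j\max(z_j,\tau)\), evaluated at \(\tau=z_{(2)}\) and \(\tau=z_{(1)}\).  Ties among the \(z_{(j)}\) need a brief remark, but the formulas agree on the boundaries.

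I expect the main obstacle to be this bookkeeping: checking that the stated inequalities are exactly the regimes of the water level, including the boundary and tie cases.  The optimisation itself is a routine strictly convex projection.
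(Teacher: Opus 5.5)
Your proposal is correct and follows essentially the same route as the paper: on the slice \(|u|=L\) you rewrite \(e_2(u)=\tfrac12(L^2-\sum_j u_j^2)\), obtain a unique minimiser by strict convexity, use an exchange argument to get the water-filling form \(u_j=\max\{z_j,c\}\), and read off the three regimes. Your explicit monotonicity argument for \(\tau\mapsto\sum_j\max\{z_j,\tau\}\) spells out the matching between the stated inequalities and the three regimes, which the paper handles in a single sentence.
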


\begin{proof}
For fixed \(|u|=L\),
\[
 e_2(u)=\frac12\left(L^2-\sum_ju_j^2\right).
\]
We therefore minimise \(\sum_ju_j^2\) subject to \(u_j\geq z_j\) and
\(\sum_ju_j=L\).  The feasible set is compact, and strict convexity gives a
unique minimiser.  If two coordinates above their lower bounds satisfy
\(u_p>u_q\), shifting a sufficiently small mass \(\epsilon\) from \(p\) to
\(q\) changes their contribution by
\(2\epsilon(u_q-u_p+\epsilon)<0\), a contradiction.  Thus all such
coordinates have a common value \(c\).  The same exchange shows that a
coordinate fixed at \(z_j\) must satisfy \(z_j\geq c\).  Consequently
\[
 u_j=\max\{z_j,c\},
 \qquad
 \sum_j\max\{z_j,c\}=L,
\]
where, if every coordinate is fixed, we may take \(c=z_{(3)}\).

If \(L/3\geq z_{(1)}\), then \(c=L/3\).  Otherwise the largest coordinate
is fixed; the other two equal \(t=(L-z_{(1)})/2\) if and only if
\(t\geq z_{(2)}\).  If \(t<z_{(2)}\), the first two coordinates are
fixed and the last is
\(L-z_{(1)}-z_{(2)}=z_{(3)}+m\geq z_{(3)}\).  Substitution in \(e_2\) gives
the three formulas.  Thus the alternatives in the statement record exactly
which lower bounds remain active at the minimiser.
\end{proof}

Optimal completion solves the problem on either side separately.  The split
\(s=x+y\), however, still ranges over a three-dimensional box.  Concavity
removes that freedom: the least completion product occurs when every
separator direction is assigned wholly to one side.

\begin{proposition}
\label{prop:separator-extreme-splits}
Let \(s\in\R_{\geq0}^3\) satisfy \(e_2(s)>0\), and put
\[
 q=\nabla\log e_2(s),
 \qquad
 p_j=s_jq_j.
\]
For \(I\subseteq\{1,2,3\}\), let \(s_I\) be the restriction of \(s\) to
the coordinates in \(I\), and put \(p(I)=\sum_{j\in I}p_j\).  Then, for
every \(0\leq z\leq s\),
\begin{equation}
\begin{split}
 &\phi(z,2-z\cdot q)\phi(s-z,z\cdot q)\\
 &\qquad\geq
 \min_{I\subseteq\{1,2,3\}}
 \phi(s_I,2-p(I))\phi(s_{I^c},p(I)).
\end{split}
\label{eq:separator-extreme-splits}
\end{equation}
\end{proposition}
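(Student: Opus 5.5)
The plan is to view the left-hand side of \eqref{eq:separator-extreme-splits} as a function
\[
 \Psi(z):=\phi(z,2-z\cdot q)\,\phi(s-z,z\cdot q)
\]
on the box \(\{0\le z\le s\}\), and to show that \(\Psi\) is quasi-concave there (in fact log-concave). A quasi-concave function on a polytope attains its minimum at a vertex. The vertices of the box are exactly the vectors \(s_I\), extended by zero, for \(I\subseteq\{1,2,3\}\). At \(z=s_I\) we have \(z\cdot q=\sum_{j\in I}s_jq_j=p(I)\) and \(s-z=s_{I^c}\). So \(\Psi(s_I)\) is precisely the term indexed by \(I\) on the right-hand side, which gives the inequality. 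Coordinates with \(s_j=0\) make the box degenerate, but its vertices are still among the \(s_I\), so they cause no trouble.

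First I check that \(\Psi\) is well defined, i.e.\ that both mass arguments are nonnegative. Since \(e_2\) has nonnegative coefficients and \(e_2(s)>0\), every \(q_j=\partial_j e_2(s)/e_2(s)\) is nonnegative, so \(z\cdot q\ge0\) for \(z\ge0\). Euler's identity for the degree-two polynomial \(e_2\) gives \(s\cdot q=2\), so \(z\le s\) implies \(z\cdot q\le2\). Both maps \(z\mapsto(z,2-z\cdot q)\) and \(z\mapsto(s-z,z\cdot q)\) are affine and take the box into \(\R^3_{\ge0}\times\R_{\ge0}\).

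The key step is to show that \(\phi^{1/2}\) is jointly concave in \((z,m)\) on \(\R^3_{\ge0}\times\R_{\ge0}\).
\begin{itemize}
\item The polynomial \(e_2\) is hyperbolic with respect to \(\one\), and its hyperbolicity cone contains the nonnegative orthant. G\r{a}rding's theorem then makes \(e_2^{1/2}\) concave on the orthant.
\item For a direct check, use \(e_2(u)=\tfrac12(|u|^2-\lVert u\rVert^2)\). This quadratic form has Lorentzian signature \((1,2)\), and the reverse Cauchy--Schwarz inequality on its forward cone gives concavity of its square root.
\item Now \(\phi(z,m)^{1/2}=\max\{e_2(z+w)^{1/2}: w\ge0,\ |w|=m\}\). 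The set \(\{(z,m,w): w\ge0,\ |w|=m\}\) is convex, and the objective is concave in \((z,w)\).
\item Partial maximisation of a jointly concave function over a convex fibre family is concave, so \(\phi^{1/2}\) is jointly concave.
\end{itemize}
Composing with the affine maps above, each factor of \(\Psi\) is the square of a nonnegative concave function of \(z\). Hence each factor is log-concave, with the value \(-\infty\) allowed where it vanishes, and so is their product \(\Psi\). Log-concave functions have convex superlevel sets, so \(\Psi\) is quasi-concave.

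To finish, I would apply the minimum principle to this quasi-concave function: every point of the box is a convex combination of vertices, and quasi-concavity gives \(\Psi(z)\ge\min_I\Psi(s_I)\). I expect the main obstacle to be the joint concavity of \(\phi^{1/2}\), and in particular stating cleanly the concavity of \(e_2^{1/2}\) on the closed orthant, including boundary points where \(e_2=0\). If citing G\r{a}rding feels too heavy, I would fall back on the explicit Lorentzian-form argument above, extended to the closed orthant by continuity. As a cross-check, one could instead verify concavity directly from the three-piece formula of \Cref{lem:triangle-water-filling}: each piece is \(\tfrac13L^2\), \(2z_{(1)}t+t^2\), or \(e_2\) itself, and one would check that the pieces glue to a \(C^1\) function. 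I would only use this route if the abstract argument turned out to be incomplete.
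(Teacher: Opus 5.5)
Your proposal is correct and takes essentially the same route as the paper: concavity of $e_2^{1/2}$ on the orthant, joint concavity of $\phi^{1/2}$ by carrying optimal completions along convex combinations, and a minimum-at-a-vertex principle on the split box. The differences are only in the sub-justifications: you get concavity of $e_2^{1/2}$ from G\r{a}rding or reverse Cauchy--Schwarz for the Lorentzian form, where the paper uses the $2\times 2$ Minkowski determinant inequality for $\mathsf M(u)$; and you finish via log-concavity (hence quasi-concavity) of the product, where the paper shows $\bigl[\phi(z,2-z\cdot q)\,\phi(s-z,z\cdot q)\bigr]^{1/4}$ is concave using the geometric mean.
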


\begin{proof}
The vertex reduction rests on the concavity of the completed determinant.
Define
\[
 \mathcal C(z,m):=\sqrt{\phi(z,m)}.
\]
The map \(u\mapsto\sqrt{e_2(u)}=\det(\mathsf M(u))^{1/2}\) is concave on
\(\R_{\geq0}^3\).  We record the short proof.  For \(A\succ0\), \(B\succeq0\),
let \(\lambda_1,\lambda_2\geq0\) be the eigenvalues of
\(A^{-1/2}BA^{-1/2}\).  Then
\[
\begin{aligned}
 \det(A+B)^{1/2}
 &=\det(A)^{1/2}\sqrt{(1+\lambda_1)(1+\lambda_2)}\\
 &\geq\det(A)^{1/2}\bigl(1+\sqrt{\lambda_1\lambda_2}\bigr)\\
 &=\det(A)^{1/2}+\det(B)^{1/2},
\end{aligned}
\]
because
\[
 (1+\lambda_1)(1+\lambda_2)
 -\bigl(1+\sqrt{\lambda_1\lambda_2}\bigr)^2
 =(\sqrt{\lambda_1}-\sqrt{\lambda_2})^2.
\]
Replacing a singular \(A\succeq0\) by \(A+\varepsilon I\) and letting
\(\varepsilon\downarrow0\) gives the same inequality.  Thus
\(A\mapsto\det(A)^{1/2}\) is superadditive on
the positive-semidefinite \(2\times2\) cone; positive homogeneity converts
superadditivity into concavity.  This is the \(2\times2\) case of the
Minkowski determinant inequality; it also follows from
\cite[Theorem~4]{MarcusLopes1957}.  Since
\(u\mapsto\mathsf M(u)\) is linear and maps \(\R_{\geq0}^3\) into this cone,
the asserted concavity follows.

If \(w_i\) is optimal for \((z_i,m_i)\), then
\(\theta w_1+(1-\theta)w_2\) is feasible for the convex combination of
\((z_1,m_1)\) and \((z_2,m_2)\).  Hence \(\mathcal C\) is jointly concave on
\(\R_{\geq0}^3\times\R_{\geq0}\).

Apply this concavity to the split variable.  Here
\(q_j=(\sum_{k\ne j}s_k)/e_2(s)\geq0\), and Euler's identity gives
\(s\cdot q=2\).  Thus both completion masses below are nonnegative on
\(0\leq z\leq s\).  Put
\[
 C_1(z)=\mathcal C(z,2-z\cdot q),
 \qquad
 C_2(z)=\mathcal C(s-z,z\cdot q).
\]
Both functions are nonnegative and concave.  The geometric mean is concave
and coordinatewise increasing on \(\R_{\geq0}^2\).  Consequently
\[
 \Psi_s(z):=\sqrt{C_1(z)C_2(z)}
 =\bigl[\phi(z,2-z\cdot q)\phi(s-z,z\cdot q)\bigr]^{1/4}
\]
is concave on the split box.  Its minimum is therefore attained at a vertex
\(z=s_I\).  We raise the resulting inequality to the fourth power and obtain
\eqref{eq:separator-extreme-splits}.  At such a vertex, each coordinate of
\(s\) belongs entirely to one summand.
\end{proof}

The extreme-split reduction leaves the following estimate for the separator
response.

\begin{theorem}
\label{thm:intrinsic-separator-contraction}
Let \(x,y\in\R_{\geq0}^3\), put \(s=x+y\), and suppose \(e_2(s)>0\).  Define
\[
 q=\nabla\log e_2(s),
 \qquad
 \ell_1=x\cdot q,
 \qquad
 \ell_2=y\cdot q,
 \qquad
 R_s=\max\{4,|s|\}.
\]
Then
\begin{equation}
 \phi(x,\ell_2)\phi(y,\ell_1)
 >\left(1+\frac1{R_s}\right)e_2(s).
 \label{eq:intrinsic-separator-contraction}
\end{equation}
\end{theorem}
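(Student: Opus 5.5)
Set \(z=x\) in \Cref{prop:separator-extreme-splits}.  Euler's identity gives \(s\cdot q=2\), so \(\ell_1+\ell_2=2\), and therefore \(2-x\cdot q=\ell_2\) and \(x\cdot q=\ell_1\).  The left side of \eqref{eq:intrinsic-separator-contraction} is then exactly the left side of \eqref{eq:separator-extreme-splits}.  It therefore suffices to prove, for each of the eight vertices \(I\subseteq\{1,2,3\}\), the strict inequality
\[
 \phi(s_I,2-p(I))\,\phi(s_{I^c},p(I))>\Bigl(1+\frac1{R_s}\Bigr)e_2(s).
\]
The expression is symmetric under \(I\leftrightarrow I^c\), because \(p(I^c)=2-p(I)\).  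Coordinate permutations of \(s\) only permute the \(p_j\).  So only two cases remain: \(I=\varnothing\), and \(I=\{j\}\) a singleton.

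\emph{Case \(I=\varnothing\).}  Here \(p(I)=0\), and the product is \(\phi(0,2)\,\phi(s,0)\).  By \Cref{lem:triangle-water-filling} this equals \(\tfrac43e_2(s)\).  Since \(R_s\ge4\), we have \(\tfrac43>\tfrac54\ge1+1/R_s\), so this case is immediate.  It is also why \(R_s\) is capped below by \(4\).

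\emph{Case \(I=\{1\}\).}  Write \(s=(a,b,c)\), \(u=b+c\), \(v=bc\), and \(E=e_2(s)=au+v>0\).  Then \(q_1=u/E\), and
\[
 p:=p_1=\frac{au}{E},\qquad 2-p=\frac{au+2v}{E}.
\]
Rather than evaluate the piecewise water-filling formula exactly, I would bound each factor below by an explicit feasible completion.  For the first factor, split the mass \(2-p\) equally over the two empty coordinates:
\[
 \phi((a,0,0),2-p)\ \ge\ a(2-p)+\tfrac14(2-p)^2 .
\]
For the second factor, put all of the mass \(p\) on the empty coordinate:
\[
 \phi((0,b,c),p)\ \ge\ pu+v .
\]
Multiplying by \(E^2\) clears denominators and reduces the claim to a polynomial inequality in \(a,u,v\) (with \(u^2\ge4v\)).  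The target is
\[
 \bigl[a(au+2v)E+\tfrac14(au+2v)^2\bigr]\bigl[au^2+vE\bigr]>\Bigl(1+\frac1{R_s}\Bigr)E^4 .
\]
My expectation is that the leading part \(a(2-p)(pu+v)\) is already at least \(E\); this should follow from an AM–GM or Cauchy–Schwarz comparison, since it is essentially the two-sum identity in disguise.  The surplus \(\tfrac14(2-p)^2(pu+v)\) should then supply the gain.  To convert that surplus into a factor \(1/R_s\), I would use \(|s|=a+u\le R_s\), together with \(E\le |s|^2/3\).

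\emph{Main obstacle.}  The singleton case is where the difficulty lies, specifically in the degenerate regimes.  When \(a\to0\) we get \(p\to0\), and when \(v\to0\) we get \(p\to2\); in each regime one of the two crude completions loses its slack.  So I would split according to whether \(p\le1\) or \(p\ge1\).  In each half I would use whichever factor retains a quadratic surplus, switching to the sharper water-filling formula of \Cref{lem:triangle-water-filling} where needed.

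I would also check the extreme instances directly: \(s=(a,b,0)\), and the equal-coordinate case \(s=(t,t,t)\), where \(p=2/3\).  The equal-coordinate case is also where the \(|s|\)-dependence of the gain should become visible.  This confirms that \(1/R_s\) cannot be replaced by a constant.

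Strictness in every case comes from the strict inequality \(\tfrac43>\tfrac54\) in the first case and from the strictly positive surplus term in the singleton case.
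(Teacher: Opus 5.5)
\textbf{Assessment: there is a genuine gap in the singleton case.} Your reduction to the eight vertices through \Cref{prop:separator-extreme-splits} matches the paper, and so does your treatment of the empty cut. The gap is the singleton case. That case is the whole content of the theorem, and the paper devotes \Cref{lem:pure-split-separator-gain} to it. Your proposed mechanism for it does not work, because your two fixed completions discard exactly the surplus that carries the gain.
\begin{itemize}
\item Take \(s=(100,1,0)\) and \(I=\{1\}\). Then \(p=1\). Your first bound \(100.25\) is exact. Your second bound gives \(pu+v=1\), but the true value is \(\phi((0,1,0),1)=\tfrac54\) (put mass \(\tfrac12\) on each of the first and third coordinates). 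Your product is therefore \(1.0025\,e_2(s)\), which is below the required \((1+\tfrac1{101})e_2(s)\).
\item Take \(s=(0.01,1,1)\). Your first completion gives about \(1.0003\), while the true value is \(L^2/3\approx1.32\): when \(a<(2-p)/2\) the mass must be spread over all three coordinates. Your product is about \(1.02\,e_2(s)\), far below \(\tfrac54e_2(s)\). At \(a=0\) it equals \(e_2(s)\) exactly, with no gain at all.
\end{itemize}
The same example refutes several of your supporting claims.
\begin{itemize}
\item Here \(a(2-p)(pu+v)\approx0.02\), while \(E=1.02\), so the claim that this leading part is at least \(E\) is false.
\item Converting the surplus via \(E\le|s|^2/3\) only weakens the bound further.
\item Since \(p=au/(au+v)\in[0,1]\) always, \(v\to0\) sends \(p\to1\), not \(2\), so your split at \(p=1\) is vacuous.
\item The configuration \(s=(t,t,t)\) is not where scale dependence appears: its singleton gain grows like \(4t/9\). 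The scale-critical configuration is \(C=0\), \(A=B\), where the exact gain is \(1/\tau+1/(4\tau^2)\).
\end{itemize}

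\textbf{What is missing.} You need an argument that uses the exact water-filling values of both factors. This includes the \((\nu-C)_+^2/4\) correction on the pair side, which is the only source of sufficient gain in the first example above. The argument must also retain the total scale \(\tau=A+B+C\) near the boundary \(C=0\), because no uniform gain exists there. Below, \(A,B,C,\nu,\mu\) are the notation of \Cref{lem:pure-split-separator-gain}, corresponding to your \(a,b,c,p,2-p\). The paper proceeds in three stages.
\begin{itemize}
\item It disposes of the regions \(A\le\mu/2\) and \(C+\nu\ge2B\) with explicit completions giving \(\mathcal Q\ge\tfrac43\).
\item It shows that the remaining case forces \(C<\nu\), which fixes which water-filling formulas are active.
\item It reparametrises by \(\delta,\theta,\sigma\) and splits on \(\sigma\). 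The range \(\sigma\ge\tfrac12\) gives \(\mathcal Q>\tfrac54\). The endpoint \(\sigma=0\) uses \(\tfrac14(\tfrac1A+\tfrac1B)\ge\tfrac1{A+B}\). The range \(0<\sigma<\tfrac12\) needs the convex-combination target \(\sigma/2+\eta/\tau\), together with \(c_A+c_S\ge2\sigma/\tau\) and the separate estimates \(X>\eta c_A\) and \(Y>4\eta c_S\).
\end{itemize}
Without a quantitative analysis of this kind, the singleton vertex, and hence the theorem, remains unproved.
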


\begin{proof}
Apply \Cref{prop:separator-extreme-splits} with \(z=x\).  The arbitrary
split has thereby been reduced to the eight ways of assigning three
directions to two sides, so it suffices to prove
the bound at the eight vertices of the split box.  The empty and full cuts
satisfy the required inequality because their product is
\(\frac43e_2(s)\) and \(R_s\geq4\).
For a nontrivial cut, replacing \(I\) by \(I^c\) interchanges the two
factors because \(p(I^c)=2-p(I)\).  We may therefore take a singleton cut,
and coordinate symmetry reduces it to
\[
 x=(A,0,0),
 \qquad
 y=(0,B,C),
 \qquad B\geq C\geq0.
\]
Since \(A+B+C=|s|\), \Cref{lem:pure-split-separator-gain} gives the strict
bound with
\(\max\{4,A+B+C\}=R_s\), namely
\eqref{eq:intrinsic-separator-contraction} at every nontrivial vertex.  The
minimum over the eight vertices is therefore strict as well.
\end{proof}

The preceding theorem depends only on the separator response and its total
size.  The trace caps compare that size with the rank of the three-sum and
give the product inequality needed for induction.

\begin{theorem}
\label{thm:rank-sensitive-three-sum-contraction}
Let \(N_1,N_2\) form a proper regular three-sum, with ranks \(a,b\geq3\),
and put \(r=a+b-2\).  Then
\begin{equation}
 B_M^\star
 <\frac r{r+1}B_{N_1}^\star B_{N_2}^\star.
 \label{eq:rank-sensitive-three-sum-contraction}
\end{equation}
\end{theorem}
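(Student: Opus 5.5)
We will combine the response factorisation with the separator estimate. Let \(\lambda\) attain \(B_M^\star\). Restrict to its positive support \(F\), and apply \Cref{prop:three-sum-response-package}. This gives responses \(x,y\), private determinants \(d_1,d_2>0\) and side masses \(m_1,m_2\), with
\[
 B_M^\star=d_1d_2\,e_2(s),\qquad s=x+y,
\]
by \eqref{eq:three-sum-combined-response}. The plan is to complete each summand by restoring triangle weight, so that it again sits on its rank simplex. Side~1 carries mass \(m_1\), and by \eqref{eq:three-sum-mass-dictionary} it needs extra triangle mass \(a-m_1=\ell_2\). Side~2 similarly needs \(b-m_2=\ell_1\). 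Both are nonnegative, since \(q\geq0\) and \(x,y\geq0\).

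For any \(w\geq0\) with \(|w|=\ell_2\), the vector \((\lambda^{(1)},w)\) is feasible for \(N_1\). So \eqref{eq:three-sum-completed-response} and the definition \eqref{eq:triangle-completion-definition} give
\[
 B_{N_1}^\star\geq d_1\phi(x,\ell_2),
\]
and in the same way \(B_{N_2}^\star\geq d_2\phi(y,\ell_1)\). Multiplying these and applying \Cref{thm:intrinsic-separator-contraction} yields
\[
 B_{N_1}^\star B_{N_2}^\star\geq d_1d_2\phi(x,\ell_2)\phi(y,\ell_1)
 >\Bigl(1+\frac1{R_s}\Bigr)d_1d_2e_2(s)
 =\Bigl(1+\frac1{R_s}\Bigr)B_M^\star .
\]

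It remains to show \(1+1/R_s\geq(r+1)/r\), that is, \(R_s\leq r\). The trace caps \eqref{eq:three-sum-trace-caps} give \(|s|\leq m_1+m_2=r\). Since \(a,b\geq3\), we have \(r=a+b-2\geq4\), so \(R_s=\max\{4,|s|\}\leq r\). Combining the two displays gives \eqref{eq:rank-sensitive-three-sum-contraction}. The inequality stays strict because the separator bound is strict and \(d_1d_2e_2(s)=B_M^\star>0\).

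The only delicate point is checking the hypotheses. \Cref{lem:three-sum-schur-response} requires that the active external columns span \(M\). This holds because \(B_M^\star>0\): \(M\) has a basis, so its positive support contains one, which is already recorded in \Cref{prop:three-sum-response-package}. We also need the completion vectors to be genuinely feasible, with nonnegative weights and total exactly \(a\) and \(b\). Nonnegativity of \(\ell_1,\ell_2\) handles this. The substantive work sits in \Cref{thm:intrinsic-separator-contraction} (through the pure-split lemma), which we assume here.
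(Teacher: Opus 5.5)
Your proposal is correct and follows the paper's proof essentially step for step. You complete each side with the masses \(\ell_2,\ell_1\) supplied by the mass dictionary, bound the product below using \Cref{thm:intrinsic-separator-contraction}, and use the trace caps together with \(r\geq4\) to get \(R_s\leq r\).
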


\begin{proof}
Take the responses supplied by \Cref{prop:three-sum-response-package} and put
\(s=x+y\).  Since \(r\geq4\) and \(|s|\leq r\), we have \(R_s\leq r\).
By \eqref{eq:three-sum-mass-dictionary}, completing side \(1\) with mass
\(\ell_2\) and side \(2\) with mass \(\ell_1\) gives feasible weightings on
the rank-\(a\) and rank-\(b\) simplices.  Hence
\begin{align*}
 B_{N_1}^\star B_{N_2}^\star
 &\geq d_1d_2\phi(x,\ell_2)\phi(y,\ell_1)\\
 &>\left(1+\frac1{R_s}\right)d_1d_2e_2(s)\\
 &\geq\left(1+\frac1r\right)B_M^\star.
\end{align*}
Rearranging proves \eqref{eq:rank-sensitive-three-sum-contraction}.
\end{proof}

\subsection{Rank-two factors}

The preceding inequality assumes that both factors have rank at least
three.  A rank-two factor has nonloop elements in only the three directions
of the common triangle, so the sum reduces to deletions and parallel
replacements in the other factor.  This case does not require the
quantitative saving above.

\begin{proposition}
\label{prop:rank-two-three-sum}
Let \(N_1,N_2\) share a coindependent triangle
\(T=\{t_1,t_2,t_3\}\), let \(r(N_1)=2\), and put
\(M=N_1\oplus_3N_2\).  For \(j=1,2,3\), let \(P_j\) be the parallel class
of \(t_j\) in \(E(N_1)\setminus T\), and put
\[
 D=\{t_j:P_j=\varnothing\}.
\]
After discarding loops, \(M\) is a parallel replacement of
\(N_2\setminus D\), and
\[
 B_M^\star=B_{N_2\setminus D}^\star\leq B_{N_2}^\star.
\]
\end{proposition}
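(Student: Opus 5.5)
I plan to work directly with the compatible representation of \Cref{lem:compatible-three-sum-representation} in the degenerate case \(r_1=2\). Here \(Q_1\) has no rows, so \(A_1=(V\ P_1)\) and every external column of \(N_1\) is a column \(u_e\in\R^2\) of \(P_1\). As in the proof of \Cref{prop:three-sum-response-package}, total unimodularity of \(A_1\) forces \(u_e\in\{0,\pm v_1,\pm v_2,\pm v_3\}\). Thus a zero column is a loop, and a column \(\pm v_j\) is parallel to \(t_j\), i.e.\ lies in \(P_j\). In \(A_M=\begin{pmatrix}P_1&P_2\\0&Q_2\end{pmatrix}\), each column of \(P_1\) is, up to sign, the column \((v_j,0)^{\top}\). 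This is exactly the column of \(t_j\) in \(A_2=\begin{pmatrix}V&P_2\\0&Q_2\end{pmatrix}\).

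The steps are as follows. (1) Delete the loops of \(M\) coming from zero columns of \(P_1\). (2) Compare the remaining columns with \(A_2\): the matrix \(A_M\) with loops removed is obtained from \(A_2\) by replacing each column \(t_j\) with \(t_j\notin D\) by the \(|P_j|\geq1\) reoriented copies of it indexed by \(P_j\), and by deleting the columns \(t_j\in D\). Since \(A_M\) represents \(M\) by \Cref{lem:compatible-three-sum-representation}, and reorientation does not change the matroid, the loopless part of \(M\) is a parallel replacement of \(N_2\setminus D\). (3) Use \Cref{lem:parallel-merging} to merge each class \(P_j\) into a single element carrying the summed weight. Together with \Cref{lem:one-sum-scaling}, which shows that loops cost nothing, this gives \(B_M^\star=B_{N_2\setminus D}^\star\). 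Ranks agree because \(r(M)=2+r(N_2)-2=r(N_2)\). Coindependence of \(T\) in \(N_2\) gives \(r(N_2\setminus T)=r(N_2)\), so \(N_2\setminus D\) still has rank \(r(N_2)\) and the rank simplices match. (4) Since deletion without rank drop only removes variables, extension by zero gives \(B_{N_2\setminus D}^\star\leq B_{N_2}^\star\), which is the final inequality.

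The main obstacle is step (2): checking that the identification holds as matroids, and not merely column by column. This is immediate once \(A_M\) is known to represent \(M\), so the real work is the claim \(u_e\in\{0,\pm v_1,\pm v_2,\pm v_3\}\). I would prove it by noting that \(u_e\) has entries in \(\{0,\pm1\}\), being a column of a TU matrix. Its determinants against \(v_1\) and \(v_2\) are also \(2\times2\) minors of \(A_1\). These rule out \(\pm(1,-1)\), because \(\det(u_e,v_3)\) would then be \(\pm2\).
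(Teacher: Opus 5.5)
Your proposal is correct and follows the paper's proof. You show that every nonloop external element of \(N_1\) is parallel to a unique \(t_j\), read off the loopless part of \(M\) as a parallel replacement of \(N_2\setminus D\) in the compatible representation, aggregate parallel classes, and use coindependence of \(T\) to extend by zero. The differences are cosmetic: you classify the columns of \(P_1\) via total unimodularity rather than by counting the three nonzero points of a binary rank-two geometry, and you handle loops through the rank-zero case of \Cref{lem:one-sum-scaling} rather than by explicit rescaling. For equality of the maxima, also state the converse of merging, namely concentrating each aggregate weight on one member of \(P_j\), as the paper does.
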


\begin{proof}
To identify the resulting matroid, observe that a binary rank-two
representation has only three nonzero projective points.  Since \(T\)
contains all three,
every nonloop element of \(N_1\setminus T\) is parallel to a unique \(t_j\).
Deleting \(T\) in the compatible representation therefore replaces
\(t_j\) by \(P_j\), or deletes it when \(P_j=\varnothing\).

Loop coordinates do not occur in the basis polynomial.  If their total
weight in a feasible weighting is \(\eta<r(M)\), setting them to zero and
scaling every nonloop weight by \(r(M)/(r(M)-\eta)\) preserves feasibility
and multiplies \(B_M\) by
\((r(M)/(r(M)-\eta))^{r(M)}\geq1\).  If \(\eta=r(M)\), then \(B_M=0\).
Thus discarding loops does not change \(B_M^\star\), and we may restrict to
weightings with zero loop mass.

After loops have been removed, the basis polynomial sees only the total
weight in each parallel class.  For such a weighting \(\lambda\) on \(M\),
set
\[
 \bar\lambda_{t_j}=\sum_{e\in P_j}\lambda_e
 \qquad(P_j\ne\varnothing),
\]
and leave all \(N_2\setminus T\) weights unchanged.  Parallel aggregation
gives
\[
 B_M(\lambda)=B_{N_2\setminus D}(\bar\lambda).
\]
Conversely, we realise every aggregate weighting by concentrating each
\(\bar\lambda_{t_j}\) on one member of \(P_j\).  This proves equality of the
maxima.  Since \(D\subseteq T\) and \(T\) is coindependent,
\(r(N_2\setminus D)=r(N_2)=r(M)\).  Extending a weighting by zero on \(D\)
proves the final inequality.
\end{proof}

\subsection{Three-sum propagation}

We now combine the rank-two description with the inequality for factors of
rank at least three.  These two cases give the estimate used in the final
induction.

\begin{corollary}
\label{cor:three-sum-propagation}
Let \(M=N_1\oplus_3N_2\) be a proper regular three-sum.  If
\[
 B_{N_i}^\star\leq\frac{2^{r(N_i)}}{r(N_i)+1}
 \qquad(i=1,2),
\]
then
\[
 B_M^\star\leq\frac{2^{r(M)}}{r(M)+1}.
\]
If both factor ranks are at least three, the conclusion is strict.
\end{corollary}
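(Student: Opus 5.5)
The argument splits according to the factor ranks; both cases use earlier results directly. Write \(a=r(N_1)\), \(b=r(N_2)\) and \(r=r(M)=a+b-2\). In a proper three-sum each \(N_i\) contains the triangle \(T\), which is a circuit, so \(a,b\geq2\).

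\emph{Case \(a,b\geq3\).} \Cref{thm:rank-sensitive-three-sum-contraction} and the hypotheses give
\[
 B_M^\star<\frac r{r+1}\,c_ac_b=\frac{r}{r+1}\cdot\frac{2^{a+b}}{(a+1)(b+1)}.
\]
It therefore suffices to show \(\frac r{r+1}c_ac_b\leq c_r=2^r/(r+1)\). Since \(2^{a+b}=4\cdot 2^r\), this reduces to \(4r\leq(a+1)(b+1)\), that is,
\[
 4(a+b-2)\leq ab+a+b+1 .
\]
This is equivalent to \(ab-3a-3b+9\geq0\), i.e.\ \((a-3)(b-3)\geq0\), which holds for \(a,b\geq3\). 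The inequality from the theorem is strict, so the conclusion is strict in this case, as claimed.

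\emph{Case where some factor has rank two.} By symmetry of the three-sum, take \(r(N_1)=2\). Then \(r=b\), and \Cref{prop:rank-two-three-sum} gives
\[
 B_M^\star\leq B_{N_2}^\star\leq c_b=c_r .
\]
If instead both factors have rank two, then \(r=2\) and the same proposition gives \(B_M^\star\leq B_{N_2}^\star\leq c_2\). The proposition assumes that \(T\) is a coindependent triangle; this is exactly the hypothesis in the definition of a proper regular three-sum (\Cref{sec:regular-three-sum}), so it applies.

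The only substantive step is the constant comparison in the first case, which factors as \((a-3)(b-3)\geq0\). The remaining point to check is that \Cref{prop:rank-two-three-sum} requires nothing beyond coindependence of \(T\) and \(r(N_1)=2\), which holds as noted above.
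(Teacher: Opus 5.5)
Your proposal is correct and matches the paper's proof: the rank-two factor case goes through \Cref{prop:rank-two-three-sum}, and the case of factor ranks at least three goes through \Cref{thm:rank-sensitive-three-sum-contraction}. The constant comparison there is \((a+1)(b+1)-4r=(a-3)(b-3)\geq0\), exactly as in the paper, and strictness is inherited from that theorem.
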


\begin{proof}
Order the factor ranks as \(a\leq b\).  There are two cases.  If
\(a=2\),
\Cref{prop:rank-two-three-sum} gives
\[
 B_M^\star\leq B_{N_2}^\star
 \leq\frac{2^b}{b+1}
 =\frac{2^{r(M)}}{r(M)+1}.
\]
If \(a,b\geq3\), put \(r=a+b-2\).  By
\Cref{thm:rank-sensitive-three-sum-contraction},
\[
 B_M^\star
 <\frac r{r+1}\frac{2^{a+b}}{(a+1)(b+1)}
 \leq\frac{2^r}{r+1},
\]
because
\[
 (a+1)(b+1)-4r=(a-3)(b-3)\geq0.
\]
\end{proof}
\section{The regular case}
\label{sec:seymour-assembly}

The regular bound follows by induction from the estimates for the three
building blocks and the sum operations.  We use a recursive form of
Seymour's decomposition theorem
\cite[Theorem~14.3 and its proof]{Seymour1980}, which states that every regular
matroid is graphic, cographic, isomorphic to \(R_{10}\), or admits a
decomposition into regular matroids as a nontrivial direct sum, a proper
two-sum, or a proper three-sum.  In Seymour's construction, the terminal
pieces may be chosen isomorphic to minors of the original matroid.  A proper
two-sum
\(N_1\oplus_2N_2\) has a common element which is neither a loop nor a coloop
and satisfies \(\lvert E(N_i)\rvert\geq3\).  Proper three-sums use the
convention of \Cref{sec:regular-three-sum}, in particular
\(\lvert E(N_i)\rvert\geq7\).

To use the decomposition inductively, we must check that its factors are
smaller.  The three operations change rank and ground-set size as follows:
\[
\begin{array}{c|c|c}
 \text{sum}&r(M)&|E(M)|\\ \hline
 M_1\oplus M_2&r_1+r_2&|E(M_1)|+|E(M_2)|\\
 N_1\oplus_2N_2&r_1+r_2-1&|E(N_1)|+|E(N_2)|-2\\
 N_1\oplus_3N_2&r_1+r_2-2&|E(N_1)|+|E(N_2)|-6.
\end{array}
\]
Each factor has rank at most \(r(M)\).  In a proper two- or
three-sum it has strictly fewer elements.  In a direct sum, either its rank
is smaller or we remove a rank-zero loop summand.  Hence every factor used
below is smaller in the lexicographic order on \((r,|E|)\).

We now prove the regular case of the main theorem.  Its equality statement
will also be needed when we pass to binary matroids with no \(F_7\)-minor.

\begin{theorem}
\label{thm:regular-basis-mass}
Every regular matroid \(M\) of rank \(r\geq1\) satisfies
\[
 B_M^\star\leq\frac{2^r}{r+1}.
\]
A feasible weighting \(\lambda\) attains equality if and only if every loop
has weight zero, the nonloop simplification of \(M\) is \(M(K_{r+1})\), and
every nonloop parallel class has total weight \(2/(r+1)\).
\end{theorem}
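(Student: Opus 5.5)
The bound is proved by induction on \((r(M),|E(M)|)\) in lexicographic order, using Seymour's recursive decomposition; equality is handled inside the same induction. First remove loops: as in the proof of \Cref{prop:rank-two-three-sum}, moving loop mass to nonloops and rescaling multiplies \(B_M\) by a factor at least one. That factor is strictly greater than one when the loop mass is positive and \(B_M(\lambda)>0\). So \(B_M^\star\) is unchanged by deleting loops, and any weighting attaining the bound puts zero weight on loops. By \Cref{lem:parallel-merging}, merging parallel classes preserves \(B_M\) and total weight. We may therefore assume \(M\) is simple, and the equality condition becomes ``\(M\cong M(K_{r+1})\) with every element weighted \(2/(r+1)\)''.

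Now apply Seymour's theorem to the simple regular matroid \(M\). If \(M\) is graphic, use a connected graph \(G\) on \(r+1\) vertices, which exists because nontrivial direct sums are treated separately; \Cref{prop:graphic-bound} then gives both the bound and the equality characterisation. If \(M\cong R_{10}\), \Cref{prop:R10-bound} gives a strict inequality. If \(M\) is cographic and not graphic, \Cref{thm:analytic-cographic-bound} gives the bound. For strictness, \(G\) is nonplanar, so \(r\ge4\) by \Cref{lem:nonplanar-cycle-rank}. Rerunning the induction in that proof, every descent step that reaches the reduced nonplanar graph is strict, by \Cref{thm:cographic-unified-descent}. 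Any other reductions either preserve \(\rho\) or pass to strictly smaller pieces whose bound is strict by direct-sum or parallel-merge strictness. Alternatively, an equality case would have to be graphic by the equality analysis below, a contradiction.

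For decomposable \(M\), the inductive hypothesis applies to the factors. A nontrivial direct sum with both ranks positive is strict by \Cref{lem:one-sum-scaling}. A proper two-sum with \(r_1,r_2\ge2\) is strict by \Cref{cor:two-sum-propagation}. A proper three-sum with both factor ranks at least \(3\) is strict by \Cref{cor:three-sum-propagation}. This leaves the degenerate cases where equality might still occur: a two-sum with a rank-one factor, and a three-sum with a rank-two factor. In a proper two-sum, a rank-one factor \(N_1\) consists of elements parallel to \(z\). So \(M\) is \(N_2\) with \(z\) replaced by a parallel class, and after simplification it is \(N_2\) itself, or \(N_2\setminus z\) if that class is empty. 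A rank-two three-sum factor gives, by \Cref{prop:rank-two-three-sum}, a parallel replacement of \(N_2\setminus D\). In both situations the simplification of \(M\) is a restriction of \(N_2\) of the same rank. Any weighting attaining \(2^r/(r+1)\) on \(M\) then pushes forward, by aggregating parallel classes, to a weighting on \(N_2\) attaining the same value. Induction, since \(N_2\) is smaller, forces \(\operatorname{si}(N_2)\cong M(K_{r+1})\), with the support covering every parallel class and each class weighted \(2/(r+1)\).

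The main obstacle is closing the equality statement here. The simple matroid \(M\) is a restriction of \(N_2\) containing that full support. Since every class of \(\operatorname{si}(N_2)\) has positive weight, \(M\) must meet every parallel class of \(N_2\). Hence \(M\cong M(K_{r+1})\), which is what we need. I expect the care to lie in checking that equality on \(M\) transports to equality on \(N_2\) with all \(\binom{r+1}{2}\) classes active. The converse direction is immediate from \Cref{prop:graphic-bound}.
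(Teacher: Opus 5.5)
Your choice to keep zero-weight nonloop classes in the simplification works in most branches. In the graphic case, a simple graph on \(r+1\) vertices whose positive edges form \(K_{r+1}\) must itself be \(K_{r+1}\). In the degenerate two- and three-sums, \(M\) sits inside \(N_2\), so full activity of the classes of \(N_2\) forces \(M\cong M(K_{r+1})\). (For simple \(M\) those degenerate sums cannot even occur: the external elements of a rank-one or rank-two factor produce a loop or a parallel pair.)

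The genuine gap is the strictness claim in the cographic, non-graphic case. Rerun the proof of \Cref{thm:analytic-cographic-bound} on a maximiser \(\lambda\) whose zero set \(Z\) is nonempty. Its first step contracts \(Z\), replacing \(M=M^*(G)\) by \(M\setminus Z=M^*(G/Z)\). This preserves \(\rho\), but contraction can destroy nonplanarity (every proper minor of \(K_{3,3}\) is planar). The argument then exits through \Cref{cor:planar-cographic-exit}, which gives only the non-strict graphic bound. So your dichotomy ``either preserves \(\rho\) or is strict'' does not yield \(B_M^\star<c_r\). What remains to exclude is \(M\setminus Z\cong M(K_{r+1})\) carrying all the mass, with \(Z\) consisting of zero-weight elements in new directions.

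Your ``alternatively'' sentence does not exclude this. The equality analysis applied to the smaller matroid \(M\setminus Z\) only gives \(M\setminus Z\cong M(K_{r+1})\). Here the containment runs opposite to the one your restriction argument relies on: \(M\) contains the extremal matroid rather than being contained in it. Applied to \(M\) itself, the sentence is circular.

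The paper handles such elements differently. It first restricts to the positive support, so the cographic descent runs on a full-support maximiser. It then restores the zero-weight elements with \Cref{lem:complete-graphic-support-rigidity}. There the one-sided KKT inequality \(z^{\top}W^{-1}z\le1\), with \(W^{-1}=\tfrac12(I+J)\), forces each zero-weight column to be \(0\), \(\pm e_i\), or \(\pm(e_i-e_j)\). Your proposal never uses the one-sided KKT condition, so it needs some substitute.

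A cheap substitute fits your framework. Since \(M\setminus Z\) is cographic of rank \(r\ge4\), and \(M(K_{r+1})\) is not cographic because \(K_{r+1}\) is nonplanar, the induction hypothesis applied to \(M\setminus Z\) already gives strictness. Alternatively, Heller's bound (a simple rank-\(r\) regular matroid has at most \(\binom{r+1}{2}\) elements) forces \(Z=\varnothing\). When \(Z=\varnothing\), your full-support rerun goes through.

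With either addition your route would be complete and would avoid the rigidity lemma altogether. As written, though, the real obstacle sits in this cographic zero-weight case, not in the degenerate sums where you located it.
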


\subsection{Proof of the inequality}

\begin{proof}[Proof of the inequality in
\Cref{thm:regular-basis-mass}]
We induct lexicographically on \((r(M),|E(M)|)\).  The terminal cases are
already covered, once we choose connected graph models.  A rank-zero factor
consists only of loops and has optimum
\(1\), so \Cref{lem:one-sum-scaling} removes it without changing the
optimum.  Every
graphic matroid has a connected graph model: delete isolated vertices and
identify one vertex from each remaining component.  This preserves the cycle
matroid and, after dualising, gives a connected graph model for every
cographic matroid.  Hence the graphic, cographic, and \(R_{10}\) cases follow
from
\Cref{prop:graphic-bound,thm:analytic-cographic-bound,prop:R10-bound}.

Suppose then that \(M\) is neither graphic nor cographic and is not isomorphic
to \(R_{10}\).  The recursive form of Seymour's theorem supplies a sum
decomposition.  Its factors are
lexicographically smaller, so the inductive bounds are available.  We use
\Cref{lem:one-sum-scaling} for direct sums,
\Cref{cor:two-sum-propagation} for two-sums, and
\Cref{cor:three-sum-propagation} for three-sums.  These exhaust the
possibilities and prove the inequality.
\end{proof}

\subsection{Equality}

The induction also tells us where equality can occur on the positive
support.  To recover the elements of weight zero, we need one further fact:
no new direction can be added to a complete-graphic maximiser without
violating first-order optimality.

\begin{lemma}
\label{lem:complete-graphic-support-rigidity}
Let \(M\) be a rank-\(r\) regular matroid and let \(\lambda\) maximise
\(B_M\) on the rank simplex.
Suppose that, after deleting zero-weight elements and aggregating positive
parallel classes, the resulting weighted matroid is \(M(K_{r+1})\), with
weight \(2/(r+1)\) on every edge.  Then every zero-weight element of \(M\)
is a loop or is parallel to one of those positive classes.
\end{lemma}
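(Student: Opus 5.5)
The plan is to combine the one-sided KKT inequality of \Cref{lem:positive-support-reduction} with the Gram calculus of \Cref{lem:regular-gram-calculus}, in coordinates where the positive support is written as the reduced incidence matrix of \(K_{r+1}\).

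\emph{Step 1: choose coordinates.} Let \(F=\{e:\lambda_e>0\}\). Since \(B_M(\lambda)=2^r/(r+1)>0\), \Cref{lem:positive-support-reduction} shows that \(F\) spans \(M\). Fix a full-row-rank totally unimodular representation \(A\) of \(M\). Pick one element from each positive parallel class lying on an edge \(\{i,r+1\}\) of the star at vertex \(r+1\). These \(r\) elements form a basis. Pivot \(A\) on this basis, exactly as in the first paragraph of the proof of \Cref{lem:compatible-three-sum-representation}. This keeps \(A\) totally unimodular and integral, and turns the chosen columns into \(e_1,\dots,e_r\).

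Every other positive column lies in a class parallel to an edge \(ij\) with \(i,j\le r\). In the graphic matroid its fundamental circuit is \(\{ij,\,i(r{+}1),\,j(r{+}1)\}\). Total unimodularity then forces the column to be \(\pm(e_i-e_j)\), and we reorient it to \(e_i-e_j\). A column parallel to a star edge is \(\pm e_i\).

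\emph{Step 2: compute \(H\) and its inverse.} Zero-weight columns do not enter
\[
 H=\sum_e\lambda_e a_ea_e^{\top}.
\]
By \Cref{lem:parallel-merging}, merging parallel classes does not change \(H\). The assumed class weights \(2/(r+1)\) therefore give
\[
 H=\tfrac{2}{r+1}\bigl((r+1)I-J\bigr)=2I-\tfrac{2}{r+1}J .
\]
This matrix is positive definite, and a direct check gives
\[
 H^{-1}=\tfrac12(I+J).
\]
Hence, for every column \(a\),
\[
 \lVert a\rVert_H^2=\tfrac12\bigl(\lVert a\rVert^2+(\one^{\top}a)^2\bigr).
\]

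\emph{Step 3: apply KKT to a zero-weight column.} Let \(e\) be a zero-weight nonloop. The one-sided inequality in \Cref{lem:positive-support-reduction} gives \(\partial_eB_M(\lambda)\le B_M(\lambda)\). By \eqref{eq:regular-gram-first-derivative} this says \(\lVert a_e\rVert_H^2\le 1\), that is,
\[
 k+\sigma^2\le 2,
\]
where \(a_e\in\{0,\pm1\}^r\setminus\{0\}\) by total unimodularity, \(k\) is its support size and \(\sigma\) its coordinate sum. The possibilities are:
\begin{itemize}
 \item \(k=1\): then \(a_e=\pm e_i\), which is parallel to the class of the edge \(\{i,r+1\}\).
 \item \(k=2\) and \(\sigma=0\): then \(a_e=\pm(e_i-e_j)\), which is parallel to the class of \(ij\).
 \item \(k=2\) with \(\sigma=\pm2\), or \(k\ge3\): then \(k+\sigma^2\ge3\), which is excluded.
\end{itemize}
So every zero-weight nonloop is parallel to a positive class, as claimed.

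The main obstacle is Step 1. We must justify that the pivoted full matrix, including the zero-weight columns, is still totally unimodular. We must also justify that the positive columns really take the signed-incidence form. The first point is the minor argument already given for \Cref{lem:compatible-three-sum-representation}. The second follows because every fundamental-circuit coefficient is a ratio of full minors, hence lies in \(\{0,\pm1\}\). After that, Steps 2 and 3 are a finite computation.
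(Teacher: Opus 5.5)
Your overall route is the same as the paper's: pivot on a star, compute $H^{-1}=\tfrac12(I+J)$, and use the one-sided KKT inequality to get $k+\sigma^2\le 2$. Steps 2 and 3 are fine \emph{once} the active columns have the form you claim. The gap is in Step 1: total unimodularity does not force the chord columns to be $\pm(e_i-e_j)$.

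The fundamental circuit $\{ij,\,i(r{+}1),\,j(r{+}1)\}$, together with integrality of the Cramer coefficients, only gives $\pm e_i\pm e_j$. That is, the chord column is either $\pm(e_i-e_j)$ or $\pm(e_i+e_j)$; the relative sign of the two coefficients is not determined. Already for $r=2$, the matrix $(e_1\ e_2\ e_1+e_2)$ is totally unimodular and represents $M(K_3)$. For that signing, $H=\tfrac23\bigl(\begin{smallmatrix}2&1\\1&2\end{smallmatrix}\bigr)$, not $2I-\tfrac23J$. Your formula $\lVert a\rVert_H^2=\tfrac12(\lVert a\rVert^2+(\one^{\top}a)^2)$ is then false: it rejects the active direction $e_1+e_2$ and accepts $e_1-e_2$. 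So the explicit $H$, and with it the case analysis in Step 3, is unjustified as written. Your remark that circuit coefficients are ratios of full minors gives their absolute values, not this sign.

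The missing step is a sign normalisation, which is exactly what the paper inserts before computing $W=2I-\tfrac2{r+1}J$. Multiply the rows by signs, $D=\diag(\epsilon)$ with $\epsilon_i\in\{\pm1\}$. This preserves total unimodularity and the represented matroid. It also leaves every $a^{\top}H^{-1}a$ unchanged, because $a\mapsto Da$ and $H\mapsto DHD$. Afterwards, reorient the star columns back to $e_i$.

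Choose $\epsilon_j$ so that each chord $1j$ becomes $\pm(e_1-e_j)$. Now take $2\le i<j\le r$. The triangle $\{1i,1j,ij\}$ is a circuit of $M(K_{r+1})$, hence linearly dependent over $\R$. So the chord column $\alpha e_i+\beta e_j$ lies in the span of $e_1-e_i$ and $e_1-e_j$. Having no $e_1$-component forces it to be a multiple of $e_i-e_j$, hence $\pm(e_i-e_j)$. With this normalisation added, the rest of your argument goes through unchanged.
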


\begin{proof}
We use the following normalisation for the active complete-graphic columns.
Choose a full-row-rank totally
unimodular representation and pivot on the columns of a star of the active
\(K_{r+1}\).  Reorient the star columns as \(e_1,\ldots,e_r\).  Each
fundamental triangle makes the chord \(ij\) a vector \(e_i\pm e_j\), up to
reorientation.  Change row signs, and reorient the star columns again, so
every chord \(1j\) is \(e_1-e_j\).  The triangle on the chords
\(1i,1j,ij\) then
forces every remaining chord to be \(e_i-e_j\), up to reorientation.  Hence
the active directions are
\[
 e_i\quad(1\leq i\leq r),
 \qquad e_i-e_j\quad(1\leq i<j\leq r).
\]
Parallel aggregation and the prescribed weights now give the weighted Gram
matrix
\[
\begin{aligned}
 W
 &=\frac2{r+1}\left(
     \sum_i e_ie_i^{\top}
     +\sum_{i<j}(e_i-e_j)(e_i-e_j)^{\top}\right)\\
 &=2I-\frac2{r+1}J,
 \qquad
 W^{-1}=\frac12(I+J).
\end{aligned}
\]

Now let \(z\) be a zero-weight column.  Pivoting preserves total
unimodularity, so \(z\in\{0,1,-1\}^r\).  Such a column can remain at weight
zero only if its one-sided determinant derivative does not exceed the common
active derivative.  Thus the determinant formula from
\Cref{lem:regular-gram-calculus} and the KKT condition from
\Cref{lem:positive-support-reduction} give
\[
 z^{\top}W^{-1}z
 =\frac12\left(\lVert z\rVert^2+
                    \left(\sum_i z_i\right)^2\right)\leq1.
\]
If \(z\) has \(k\) nonzero entries, the first term inside the parentheses is
\(k\).  The inequality therefore permits only \(z=0\), \(k=1\), or \(k=2\)
with two entries of opposite signs.  Thus
\[
 z=0,\qquad z=\pm e_i,\qquad\text{or}\qquad
 z=\pm(e_i-e_j).
\]
These are the zero direction and the active complete-graphic
directions.  Hence the element is a loop or belongs to an active parallel
class.
\end{proof}

\begin{proof}[Proof of the equality statement in
\Cref{thm:regular-basis-mass}]
Let \(\lambda\) be feasible and satisfy \(B_M(\lambda)=c_r\).  We begin by
removing features that the basis polynomial does not see separately.
No loop has positive weight: its derivative is zero, whereas every positive
coordinate has derivative \(B_M(\lambda)>0\) by
\Cref{lem:positive-support-reduction}.  Restrict to the positive support and
aggregate its parallel classes.  This gives a simple rank-\(r\) regular
matroid \(L\) with a positive weighting \(\mu\) such that
\[
 B_L(\mu)=c_r.
\]
The inequality already proved shows that \(\mu\) is a maximiser for \(L\).

We claim that this positive simple matroid must be graphic.  Each
other alternative in Seymour's theorem forces a strict inequality in the
corresponding product estimate.  A nontrivial direct sum has positive-rank
factors and is strict by
\Cref{lem:one-sum-scaling}.  In a proper two-sum, a rank-one factor has at
least two external elements.  An external loop remains a loop in the sum;
otherwise those external elements are nonloops in the unique rank-one
parallel class and give a parallel pair.  Simplicity excludes both
possibilities.  Consequently both factor ranks are at least two, and
\Cref{cor:two-sum-propagation} is strict.

Likewise, a rank-two factor in a proper three-sum is impossible.  The common
triangle exhausts the three nonzero points of a binary rank-two geometry.
The factor has at least four external elements; an external loop would give
a loop in the sum, while otherwise two external elements occupy the same
point and give a parallel pair.  Both contradict simplicity.  Thus both
factor ranks are at least three, and \Cref{cor:three-sum-propagation} is
strict.

The \(R_{10}\) case is strict by \Cref{prop:R10-bound}.  The only alternative
not yet excluded is a nonplanar cographic matroid.  In the proof of
\Cref{thm:analytic-cographic-bound}, graph loops and nontrivial block
splittings give strict direct sums, while degree-two vertices give matroid
parallel pairs and graph bridges give matroid loops; none is compatible with
\(L\).  A graph-parallel pair gives,
using the already proved rank-\((r-1)\) bound,
\[
 B_L^\star\leq\frac43c_{r-1}
 =\frac{2(r+1)}{3r}\,c_r<c_r,
\]
where the strict inequality follows from \(r\geq4\) by
\Cref{lem:nonplanar-cycle-rank}.  In every remaining case,
the one-edge or cubic-star descent strictly increases the normalised optimum,
again contradicting equality.  Hence equality in the cographic case forces
planarity, and the matroid is graphic.  The graphic equality statement
\Cref{prop:graphic-bound} now gives
\[
 L\cong M(K_{r+1}),
 \qquad \mu_e=\frac2{r+1}\quad(e\in E(L)).
\]

Finally restore the zero-weight elements.  By
\Cref{lem:complete-graphic-support-rigidity}, each is a loop or is parallel
to one of the positive complete-graphic classes.  Thus the nonloop
simplification of \(M\) is \(M(K_{r+1})\), and every nonloop parallel class
has aggregate weight \(2/(r+1)\), as asserted.

Conversely, parallel aggregation and \Cref{prop:graphic-bound} show that
every weighting listed in the theorem attains equality.
\end{proof}
\section{Binary matroids without a Fano minor}
\label{sec:fano-minor-free}

Seymour's theorem expresses every binary matroid with no \(F_7\)-minor as
a sequence of direct sums and two-sums of regular matroids and copies of
\(F_7^*\)
\citep{Seymour1981Multicommodity}; see also
\cite[Corollary~12.2.6]{Oxley2011}.  The regular factors are covered by
\Cref{thm:regular-basis-mass}, and the estimates in
\Cref{sec:regular-direct-sums,sec:regular-two-three-sums} control both sum
operations.  To prove \Cref{thm:introduction-basis-mass}, it remains to
bound the basis mass of \(F_7^*\) and check equality.

\subsection{The dual Fano matroid}

The contractions of \(F_7^*\) are graphic, so the graphic theorem reduces its
basis-mass estimate to a one-variable inequality.

\begin{lemma}
\label{lem:dual-fano-basis-mass}
The rank-four dual Fano matroid satisfies
\[
 B_{F_7^*}^\star
 =28\left(\frac47\right)^4
 =\frac{1024}{343}
 <\frac{16}{5}.
\]
\end{lemma}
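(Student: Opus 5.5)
The plan is to use first-order optimality together with the graphic bound on the single-element contractions of \(F_7^*\). The bound that comes out is exactly the uniform value, so no case analysis over supports should be needed.

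\emph{Step 1: combinatorics and the lower bound.} The circuits of size four of \(F_7^*\) are the complements of the seven lines of \(F_7\). Every other \(4\)-subset is a basis, so \(|\mathcal B(F_7^*)|=35-7=28\). The uniform weighting \(\lambda_e=4/7\) then gives \(B_{F_7^*}^\star\ge 28(4/7)^4=1024/343\).

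\emph{Step 2: derivatives as contractions.} For every element \(e\) we have \(F_7^*/e=(F_7\setminus e)^*\cong M(K_4)^*\cong M(K_4)\), which is graphic of rank three. By multiaffinity, \(\partial_eB_{F_7^*}(\lambda)=B_{F_7^*/e}(\lambda_{-e})\). The remaining mass on \(E\setminus e\) is \(4-\lambda_e\). Homogeneity of degree three and \Cref{prop:graphic-bound} (with \(r=3\), \(c_3=2\)) then give
\[
 \partial_eB_{F_7^*}(\lambda)\le 2\Bigl(\frac{4-\lambda_e}{3}\Bigr)^3 .
\]

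\emph{Step 3: Euler at a maximiser.} Let \(\lambda\) be a maximiser; its value is positive by Step 1. By \Cref{lem:positive-support-reduction}, \(\partial_eB=B\) on the support, and zero coordinates contribute nothing to the Euler sum. Hence
\[
 4B=\sum_e\lambda_e\partial_eB\le \frac{2}{27}\sum_{e}g(\lambda_e),
 \qquad g(t)=t(4-t)^3,
\]
with the \(\lambda_e\in[0,4]\) summing to \(4\).

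\emph{Step 4: the one-variable inequality.} Note that \(g'(t)=4(4-t)^2(1-t)\) and \(g''(t)=12(4-t)(t-2)\). So \(g\) is concave on \([0,2]\) but not on all of \([0,4]\), and I would use the tangent line at \(t_0=4/7\) rather than Jensen. Set \(\ell(t)=g(t_0)+g'(t_0)(t-t_0)\). I expect the verification that \(g(t)\le \ell(t)\) on \([0,4]\) to be the only real obstacle. The difference \(\ell-g\) is a quartic with a double root at \(t_0\), so \(\ell(t)-g(t)=(t-t_0)^2\,p(t)\) for an explicit quadratic \(p\). It then remains to check \(p\ge0\) on \([0,4]\). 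On \([0,2]\) this is automatic from concavity. On \([2,4]\), \(g\) is decreasing while \(\ell\) stays positive, because \(g'(t_0)>0\); so it suffices to compare at \(t=2\), where \(g(2)=16<\ell(2)\).

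Summing the tangent inequality over the seven elements and using \(\sum_e(\lambda_e-t_0)=0\) gives \(\sum_e g(\lambda_e)\le 7g(4/7)=7\cdot\tfrac47\cdot(24/7)^3\). Therefore
\[
 B\le \frac{2}{27}\cdot\frac14\cdot 4\Bigl(\frac{24}{7}\Bigr)^3=\frac{1024}{343},
\]
which matches Step 1. Finally, \(1024\cdot5=5120<5488=16\cdot343\) gives the strict comparison with \(16/5\).
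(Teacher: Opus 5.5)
Your proof is correct and follows the paper's argument. It uses Euler's identity to write \(4B\) through the contractions \(F_7^*/e\cong M(K_4)\), applies the homogeneous graphic bound \(2((4-\lambda_e)/3)^3\), and finishes with the tangent line to \(t(4-t)^3\) at \(4/7\). The differences are cosmetic: \(4B=\sum_e\lambda_e\partial_eB\) holds for every feasible \(\lambda\), so you do not need a maximiser or KKT, and the paper checks the tangent inequality with the global factorisation \((t-\tfrac47)^2\bigl((t-\tfrac{38}{7})^2+\tfrac{284}{49}\bigr)\) rather than your (equally valid) concavity-plus-monotonicity split.
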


\begin{proof}
Deleting any element from \(F_7\) gives \(M(K_4)\).  Dualising and using the
self-duality of \(M(K_4)\) shows that every contraction \(F_7^*/e\) is
isomorphic to \(M(K_4)\).  Let \(E=E(F_7^*)\), and let \(\lambda\) be a
nonnegative weighting of total mass four.  Counting each basis once for each
of its four elements, and then applying the homogeneous graphic bound to
each contraction, gives
\[
 4B_{F_7^*}(\lambda)
 =\sum_e\lambda_e
   B_{F_7^*/e}(\lambda|_{E\setminus\{e\}})
 \leq 2\sum_e\lambda_e
      \left(\frac{4-\lambda_e}{3}\right)^3.
\]
For each \(e\), the weights remaining after contraction have total mass
\(4-\lambda_e\); when this mass is zero, the corresponding summand is zero.
Hence
\[
 B_{F_7^*}(\lambda)
 \leq\frac1{54}\sum_e\lambda_e(4-\lambda_e)^3.
\]

It remains to bound a one-variable sum.  Put \(f(t)=t(4-t)^3\).  The tangent
to \(f\) at \(4/7\) lies above \(f\) on the whole real line, since
\[
 f\left(\frac47\right)
 +f'\left(\frac47\right)\left(t-\frac47\right)-f(t)
 =\left(t-\frac47\right)^2
  \left(\left(t-\frac{38}{7}\right)^2+\frac{284}{49}\right).
\]
There are seven elements and \(\sum_e\lambda_e=4\).  Summing the tangent
inequality therefore gives
\[
 B_{F_7^*}(\lambda)
 \leq\frac1{54}\sum_e f(\lambda_e)
 \leq\frac7{54}f\left(\frac47\right)
 =\frac{1024}{343}.
\]
Finally, \(F_7\) has \(\binom73-7=28\) bases, and its dual has the same
number.  Uniform weights attain the bound because
\(28(4/7)^4=1024/343\), and \(1024/343<16/5\).
\end{proof}

\subsection{Proof of the inequality}

\begin{proof}[Proof of the inequality in
\Cref{thm:introduction-basis-mass}]
We induct on the number of sum operations in the construction.  If no sum is
used, the matroid is regular or isomorphic to \(F_7^*\), and the result follows
from \Cref{thm:regular-basis-mass,lem:dual-fano-basis-mass}.  Otherwise,
consider the final operation.  Its factors have constructions with fewer sum
operations, so the inductive bounds are available.  Direct sums and two-sums
preserve the rank-dependent bound by
\Cref{lem:one-sum-scaling,cor:two-sum-propagation}.  A rank-zero factor
consists only of loops, has optimum one, and does not affect the direct-sum
step.  This completes the induction.
\end{proof}

\subsection{Equality}

The equality argument follows the same pattern as in the regular case.  After
loops and parallel multiplicities are removed, every nontrivial sum is
strict.  The only point requiring care is that nonloop classes of total
weight zero must be retained.

\begin{proof}[Proof of the equality statement in
\Cref{thm:introduction-basis-mass}]
Let \(\lambda\) be a feasible weighting that attains the bound.  The
inequality just proved makes \(\lambda\) a maximiser, and
\Cref{lem:positive-support-reduction} shows that no loop has positive weight.

We first pass to the nonloop simplification while retaining every nonloop
parallel class.  Delete the loops and replace each nonloop parallel class by
a single element, assigning it the total weight of that class; classes of
total weight zero are retained.  This produces a weighting \(\mu\) on the
nonloop simplification \(L\) of \(M\).  The matroid \(L\) is simple, binary,
has rank \(r\), and has no \(F_7\)-minor.  Parallel aggregation preserves both
total weight and basis mass, so \(\mu\) is feasible and
\[
 B_L(\mu)=\frac{2^r}{r+1}.
\]
Because the inequality already proved applies to \(L\), this identity implies
that \(B_L^\star=2^r/(r+1)\), and hence that \(\mu\) is a maximiser for \(L\).

We claim that \(L\) is regular.  Choose a Seymour construction of \(L\), and
suppose that it contains a sum.  Consider its final operation.  If this is a
nontrivial direct sum, both factors have positive rank because \(L\) has no
loops, and \Cref{lem:one-sum-scaling} makes the bound strict.  If it is a
proper two-sum, simplicity forces both factor ranks to be at least two.
A rank-one factor has at least two elements outside the common
element.  Any external loop remains a loop in the sum, while two external
nonloops become parallel.  Both possibilities contradict simplicity.  The
strict part of \Cref{cor:two-sum-propagation} therefore excludes equality in
this case as well.

It follows that no sum is used, so \(L\) is regular or isomorphic to
\(F_7^*\); the latter possibility is strict by
\Cref{lem:dual-fano-basis-mass}.  The equality statement in
\Cref{thm:regular-basis-mass} now gives
\[
 L\cong M(K_{r+1}),
 \qquad
 \mu_e=\frac2{r+1}\quad(e\in E(L)).
\]
The elements of \(L\) are in bijection with the nonloop parallel classes of
\(M\).  Hence the nonloop simplification of \(M\) is \(M(K_{r+1})\), every
nonloop parallel class has total weight \(2/(r+1)\), and every loop has weight
zero.

Conversely, parallel aggregation and \Cref{prop:graphic-bound} show that
every weighting described in \Cref{thm:introduction-basis-mass} attains
equality.
\end{proof}

\begin{proof}[Proof of \Cref{cor:largest-minor-closed-class}]
For a feasible weighting \(x\) of \(U_{2,4}\), we have
\(\sum_{i=1}^4x_i=2\).  By the Cauchy--Schwarz inequality,
\(\sum_{i=1}^4x_i^2\geq1\), and hence
\[
 B_{U_{2,4}}(x)
 =\sum_{i<j}x_ix_j
 =\frac12\left(4-\sum_{i=1}^4x_i^2\right)
 \leq\frac32.
\]
Equality is attained when \(x_i=1/2\) for every \(i\), so
\[
 B_{U_{2,4}}^\star=\frac32>\frac43.
\]
The Fano matroid has \(28\) bases, and its uniform weighting gives
\[
 B_{F_7}^\star\geq28\left(\frac37\right)^3
 =\frac{108}{49}>2.
\]
Any minor-closed class satisfying the stated inequality must therefore
exclude both \(U_{2,4}\) and \(F_7\).  Since the class is minor-closed,
Tutte's characterisation shows that every member is binary
\cite[Theorem~6.5.4]{Oxley2011}, and the exclusion of \(F_7\) shows that no
member has an \(F_7\)-minor.  Thus every such class is contained in the class
of binary matroids with no \(F_7\)-minor.  Conversely, the main theorem proves
the inequality for every positive-rank matroid in this class, while a
rank-zero matroid has \(B_M^\star=1=2^0/(0+1)\).
\end{proof}

\subsection{Consequences for the number of bases}

Evaluating the basis polynomial at uniform weights turns the main theorem
into a sharp basis-count bound.  Its equality statement also determines
exactly when the uniform weighting is extremal.

\begin{corollary}
\label{cor:fano-free-basis-count}
Let \(M\) be an \(n\)-element rank-\(r\) binary matroid with no
\(F_7\)-minor, where \(n\geq r\geq1\), and write
\(b(M)=|\mathcal B(M)|\).  Then
\[
 b(M)\leq
 \frac{2^r}{r+1}\left(\frac nr\right)^r.
\]
Equality holds if and only if \(\binom{r+1}{2}\) divides \(n\) and \(M\) is
obtained from \(M(K_{r+1})\) by replacing every edge with a parallel class
of size \(n/\binom{r+1}{2}\).
\end{corollary}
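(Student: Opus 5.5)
The plan is to evaluate the basis polynomial at the uniform weighting and then read off equality from the equality clause of \Cref{thm:introduction-basis-mass}. Put \(\lambda_e=r/n\) for every \(e\in E(M)\). This weighting is feasible, since its total mass is \(r\). We have \(B_M(\lambda)=(r/n)^r b(M)\). The main theorem gives \(B_M(\lambda)\le B_M^\star\le 2^r/(r+1)\), and multiplying through by \((n/r)^r\) yields the stated bound.

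For equality, suppose \(b(M)=\frac{2^r}{r+1}(n/r)^r\). Then the uniform weighting attains \(B_M(\lambda)=2^r/(r+1)\). The equality statement of \Cref{thm:introduction-basis-mass} then forces three facts:
\begin{enumerate}[label=\textup{(\roman*)}]
\item every loop has weight zero;
\item the nonloop simplification of \(M\) is \(M(K_{r+1})\);
\item every nonloop parallel class has total weight \(2/(r+1)\).
\end{enumerate}
Since every element has weight \(r/n>0\), condition (i) rules out loops entirely.

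Let \(k_e\) denote the size of the parallel class of the edge \(e\) of \(K_{r+1}\). Condition (iii) gives \(k_e\cdot r/n=2/(r+1)\), so \(k_e=2n/(r(r+1))=n/\binom{r+1}{2}\) for every edge. In particular \(\binom{r+1}{2}\) divides \(n\), because \(k_e\) is a positive integer. Together with (ii) and the absence of loops, this shows that \(M\) is \(M(K_{r+1})\) with each edge replaced by a parallel class of that common size.

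For the converse, take such a matroid \(M\) with class size \(k=n/\binom{r+1}{2}\). Under uniform weights each class carries total weight \(kr/n=2/(r+1)\), and there are no loops. So the converse part of the equality statement (equivalently, parallel aggregation via \Cref{lem:parallel-merging} together with \Cref{prop:graphic-bound}) gives \(B_M(\lambda)=2^r/(r+1)\), and \(b(M)\) attains the bound. I do not see a real obstacle. The only point needing care is that positivity of the uniform weights excludes loops, which rules out padding \(M\) by loops. Otherwise a matroid with loops could not reach the bound, because the extra elements dilute \(r/n\).
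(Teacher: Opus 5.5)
Your proposal is correct and matches the paper's proof: you evaluate at the uniform weighting \(r/n\) and apply the main theorem. For equality you use positivity of the weights to exclude loops, and condition (iii) of the equality clause to get class size \(n/\binom{r+1}{2}\) and the divisibility, with the converse following from the same computation.
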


\begin{proof}
Give every element weight \(r/n\).  Then
\[
 B_M(\lambda)=b(M)\left(\frac rn\right)^r,
\]
so the inequality follows from \Cref{thm:introduction-basis-mass}.  If
equality holds, this uniform weighting attains equality in the main theorem.
The equality statement therefore shows that \(M\) has no loops and has
\(\binom{r+1}{2}\) nonloop parallel classes, each of total weight
\(2/(r+1)\).  Since every individual weight is \(r/n\), each class has size
\[
 \frac{2n}{r(r+1)}
 =\frac{n}{\binom{r+1}{2}}.
\]
This proves the necessary form and the divisibility condition.  Conversely,
the same calculation shows that the stated equal parallel blow-up attains
the bound.
\end{proof}

For simple matroids, the corollary combines with the extremal size
bound.  By \cite[Corollary~6.4]{Kung1993Extremal}, a simple rank-\(r\) binary
matroid with no \(F_7\)-minor has at most \(\binom{r+1}{2}\) elements.  Hence
\Cref{cor:fano-free-basis-count} gives
\[
 b(M)\leq\frac{2^r}{r+1}
 \left(\frac{\binom{r+1}{2}}r\right)^r
 =(r+1)^{r-1}.
\]
Equality forces
\(|E(M)|=\binom{r+1}{2}\) and equality in the corollary, and hence
\(M\cong M(K_{r+1})\).  The converse follows from Cayley's formula.

The same construction also shows that divisibility does not affect the
leading asymptotic term.

\begin{corollary}
\label{cor:fano-free-limiting-density}
Fix \(r\geq1\).  For \(n\geq r\), let \(b_r(n)\) be the largest number of
bases of an \(n\)-element rank-\(r\) binary matroid with no \(F_7\)-minor.
Then
\[
 \lim_{n\to\infty}\frac{b_r(n)}{\binom nr}
 =\frac{2^r r!}{r^r(r+1)}.
\]
\end{corollary}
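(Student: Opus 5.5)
The plan is to sandwich \(b_r(n)/\binom nr\) between an upper bound from \Cref{cor:fano-free-basis-count} and a lower bound from an explicit near-balanced parallel blow-up of \(M(K_{r+1})\).

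\emph{Upper bound.} By \Cref{cor:fano-free-basis-count}, every admissible \(M\) satisfies \(b(M)\le \frac{2^r}{r+1}(n/r)^r\), so
\[
 \frac{b_r(n)}{\binom nr}\le \frac{2^r}{r+1}\frac{n^r}{r^r\binom nr}
 =\frac{2^r r!}{r^r(r+1)}\prod_{j=0}^{r-1}\Bigl(1-\frac jn\Bigr)^{-1}.
\]
The product tends to \(1\) as \(n\to\infty\) with \(r\) fixed, so the \(\limsup\) is at most the claimed constant.

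\emph{Lower bound.} Put \(N=\binom{r+1}2\). For \(n\ge N\), write \(n=qN+p\) with \(0\le p<N\), and let \(M_n\) be obtained from \(M(K_{r+1})\) by replacing \(p\) edges with parallel classes of size \(q+1\) and the remaining edges with classes of size \(q\). This \(M_n\) is graphic, hence binary with no \(F_7\)-minor, and it has rank \(r\) and \(n\) elements. Each spanning tree of \(K_{r+1}\) gives at least \(q^r\) bases, so by Cayley's formula \(b(M_n)\ge (r+1)^{r-1}q^r\). Since \(q\ge n/N-1\), we get \(q^r/n^r\to N^{-r}\), and also \(\binom nr\sim n^r/r!\). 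Therefore
\[
 \liminf_{n\to\infty}\frac{b_r(n)}{\binom nr}\ge (r+1)^{r-1}\frac{r!}{N^r}
 =(r+1)^{r-1}r!\frac{2^r}{r^r(r+1)^r}=\frac{2^r r!}{r^r(r+1)}.
\]
For the finitely many \(n<N\) nothing is needed, since only the limit matters.

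The \(\liminf\) and \(\limsup\) agree, which gives the limit. No step is a real obstacle. The only care needed is the rounding in the non-divisible case, and the crude bound \(q\ge n/N-1\) suffices because \(r\) is fixed.
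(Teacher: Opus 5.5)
Your proposal is correct and is essentially the paper's proof: the upper bound comes from \Cref{cor:fano-free-basis-count}, and the lower bound comes from a graphic parallel blow-up of \(M(K_{r+1})\) with class sizes differing by at most one, counted via Cayley's formula. The only cosmetic difference is that you use the one-sided estimate \(b(M_n)\ge (r+1)^{r-1}q^r\) with \(q\ge n/N-1\), whereas the paper writes \(b(M_n)=(r+1)^{r-1}(n/m)^r+O(n^{r-1})\).
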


\begin{proof}
Put \(m=\binom{r+1}{2}\), and for each \(n\geq m\) let \(M_n\) be an
\(n\)-element parallel blow-up of \(M(K_{r+1})\) whose class sizes differ by
at most one.  These matroids are graphic, so they belong to the stated
class.  Each spanning tree uses \(r\) edge classes.  Their sizes are
\(n/m+O(1)\), so Cayley's formula gives
\[
 b(M_n)=(r+1)^{r-1}\left(\frac nm\right)^r+O(n^{r-1})
       =\frac{2^r}{r^r(r+1)}n^r+O(n^{r-1}).
\]
On the other hand, \Cref{cor:fano-free-basis-count} gives
\[
 b(M_n)\leq b_r(n)\leq\frac{2^r}{r^r(r+1)}n^r.
\]
Since \(\binom nr=n^r/r!+O(n^{r-1})\) for fixed \(r\), the limit follows.
\end{proof}

Thus \Cref{cor:fano-free-basis-count} gives the exact maximum in the divisible
cases, while \Cref{cor:fano-free-limiting-density} gives the limiting density
for every fixed rank.

\clearpage
\phantomsection
\addcontentsline{toc}{section}{References}


\clearpage
\appendix
\section{The scalar separator inequality}
\label{app:three-sum-pure-split}

The proof of \Cref{thm:intrinsic-separator-contraction} has reduced the
separator problem to a singleton cut: one summand carries \((A,0,0)\), and
the other carries \((0,B,C)\).  The remaining question is how large the
completion product is relative to \(e_2(A,B,C)\).  The boundary
\(C=0\) already shows why the estimate must retain the total scale: when
\(A=B=T\), the gain approaches \(1/(A+B)\).  We therefore determine which
completion coordinates are active and settle the regions where the gain is
uniformly larger than required.  The remaining calculation isolates the
behaviour near this boundary and compares it with \(A+B+C\).

\begin{lemma}
\label{lem:pure-split-separator-gain}
Let \(A,B,C\geq0\), with \(B\geq C\) and \(e_2(A,B,C)>0\).  Put
\(\tau=A+B+C\), \(x=(A,0,0)\), \(y=(0,B,C)\), and
\(q=\nabla\log e_2(x+y)\).  If
\[
 \nu=x\cdot q,
 \qquad
 \mu=y\cdot q=2-\nu,
\]
then
\begin{equation}
 \phi(x,\mu)\phi(y,\nu)
 >\left(1+\frac1{\max\{4,\tau\}}\right)e_2(x+y).
 \label{eq:pure-split-separator-gain}
\end{equation}
\end{lemma}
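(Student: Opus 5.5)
The plan is to make everything explicit and reduce \eqref{eq:pure-split-separator-gain} to finitely many polynomial inequalities, one for each pair of water-filling regimes in \Cref{lem:triangle-water-filling}.

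\textbf{Explicit data.} Write \(E:=e_2(A,B,C)=A(B+C)+BC>0\). The gradient is \(q=(B+C,\,A+C,\,A+B)/E\), so
\[
 \nu=\frac{A(B+C)}{E},\qquad \mu=2-\nu=\frac{A(B+C)+2BC}{E}.
\]
Both \(\nu\) and \(\mu\) are homogeneous of degree zero in \((A,B,C)\), while \(E\) has degree two. This mismatch is why the scale \(\tau\) must enter the bound.

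\textbf{The completions.} For \(x=(A,0,0)\) with mass \(\mu\), \Cref{lem:triangle-water-filling} gives
\[
 \phi(x,\mu)=A\mu+\tfrac{\mu^2}{4}\ \text{ if }\mu\le 2A,\qquad \phi(x,\mu)=\tfrac{(A+\mu)^2}{3}\ \text{ otherwise}.
\]
For \(y=(0,B,C)\) with mass \(\nu\), put \(L=B+C+\nu\) and \(t=(C+\nu)/2\). The three regimes are:
\begin{itemize}
 \item \(L^2/3\) when \(L\ge 3B\);
 \item \(B(C+\nu)+(C+\nu)^2/4\) when \(L<3B\) and \(t\ge C\), that is \(\nu\ge C\);
 \item \(BC+(B+C)\nu\) when \(\nu<C\), where the third coordinate is just \(\nu\).
\end{itemize}

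\textbf{Main regime.} I will treat \(\mu\le 2A\) with the middle \(y\)-regime first, since it contains the extremal boundary. At \(C=0\), \(A=B=T\) we get \(\nu=\mu=1\) and \(E=T^2\), so the product is \((T+\tfrac14)^2=T^2(1+\tfrac1{2T}+\tfrac1{16T^2})\). This is exactly \((1+1/\tau)E\) plus the slack \(1/16\). In this regime I would multiply through by \(E^2\), or by \(E^4\) as needed. With \(\tau\) fixed by \(\max\{4,\tau\}\ge\tau\), it suffices to prove \(\tau\,[\phi(x,\mu)\phi(y,\nu)-E]>E\) as a polynomial inequality in \(A,B,C\). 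I expect to expand
\[
 \phi(x,\mu)\phi(y,\nu)-E=\bigl(A\mu+\tfrac{\mu^2}{4}\bigr)\bigl(B(C+\nu)+\tfrac{(C+\nu)^2}{4}\bigr)-E,
\]
isolate the cross terms \(A\cdot\tfrac{(C+\nu)^2}{4}+B(C+\nu)\tfrac{\mu^2}{4}\), and show they dominate \(E/\tau\). The cross terms are of order \(\tau\) when \(A\approx B\), so this should work. The genuinely delicate point is a sum-of-squares or AM--GM certificate that holds uniformly as \(C\to0\) with \(A\ne B\). I would first try substituting \(A=\tau\alpha\), \(B=\tau\beta\), \(C=\tau\gamma\) on the simplex \(\alpha+\beta+\gamma=1\). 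The inequality then reads \(\tau^2 P(\alpha,\beta,\gamma)+\tau Q(\alpha,\beta,\gamma)>\tau\,e_2(\alpha,\beta,\gamma)\). It then suffices to show \(Q\ge e_2\) on the simplex, since the \(\tau^2P\) term is nonnegative and one also has the strict \(1/16\)-type positive term.

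\textbf{Other regimes.} Here at least one side is in an equal-coordinates (\(L^2/3\)) or tight-third-coordinate state. I will show a uniform gain such as \(\ge\tfrac54 E\), which suffices because \(\max\{4,\tau\}\ge4\). For example, if \(\mu>2A\), then \(A\) is small relative to \(\mu\), and \(\tfrac{(A+\mu)^2}{3}\) exceeds \(A\mu+\mu^2/4\) by a definite amount. In the \(\nu<C\) regime, \(\phi(y,\nu)=BC+(B+C)\nu\), and the product with \(\phi(x,\mu)\ge A\mu\) gives extra terms \(BC\cdot A\mu\) beyond \(E\). Throughout I would use that each \(\phi\) dominates the value at any feasible \(w\), so I may plug in convenient suboptimal completions (for instance \(w=(0,\mu/2,\mu/2)\)) to get clean lower bounds rather than tracking regime boundaries exactly. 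The main obstacle is the near-boundary certificate in the main regime. There the gain is only \(1/\tau+O(\tau^{-2})\), so the lower bound must be sharp to first order, and strictness must come from the second-order term.
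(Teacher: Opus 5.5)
Your main regime contains a genuine gap: the inequality you reduce to there is false. Replacing $\max\{4,\tau\}$ by $\tau$ and aiming for $\tau\,[\phi(x,\mu)\phi(y,\nu)-E]>E$ demands a gain of $1/\tau$ even when $\tau<4$. The lemma only asks for $\mathcal Q-1>\min\{\tfrac14,\tfrac1\tau\}$, where $\mathcal Q:=\phi(x,\mu)\phi(y,\nu)/E$. Take $(A,B,C)=(1,1,\tfrac1{10})$, so $\tau=\tfrac{21}{10}$, $E=\tfrac65$, $\nu=\tfrac{11}{12}$ and $\mu=\tfrac{13}{12}$. Then $\mu\le2A$, $\nu\ge C$ and $B+C+\nu<3B$, so this point lies in exactly the regime you single out, and
\[
 \phi(x,\mu)\phi(y,\nu)=\frac{793}{576}\cdot\frac{18361}{14400}\approx1.755
 <\frac{62}{35}\approx1.771=\Bigl(1+\frac1\tau\Bigr)E .
\]
The lemma itself survives here, since $\mathcal Q\approx1.463>\tfrac54$. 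But no sum-of-squares or simplex certificate for your target can exist. Your proposed sign structure also fails at this point. Counting $\mu,\nu$ as degree zero, the part of $\phi(x,\mu)\phi(y,\nu)-E$ that is quadratic in $(A,B,C)$ is $A\mu\nu(B+\tfrac C2)+\tfrac{\mu^2}{4}(BC+\tfrac{C^2}{4})-E\approx-0.127$. So after your rescaling the coefficient of $\tau^2$ is negative and cannot be discarded.

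What is missing is a mechanism that delivers the mixed target $\min\{\tfrac14,\tfrac1\tau\}$ in the region containing the $C\to0$ boundary. The paper first removes the regions $A\le\mu/2$ and $C+\nu\ge2B$ by explicit feasible completions, which give $\mathcal Q\ge\tfrac43$. Your expectation of a uniform gain in the other regimes is correct, and the case $\nu<C$ also gives $\mathcal Q\ge\tfrac43$, because it forces $\sigma\ge1$ in the notation below. In the remaining region the paper sets $S=B+C$, $h=BC/S$, $\delta=h/(A+h)$, and introduces the scale-dependent parameter $\sigma=2h/\nu$. For $\sigma\ge\tfrac12$ it gets $\mathcal Q>\tfrac54$. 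For $\sigma<\tfrac12$ it proves
\[
 \mathcal Q-1>\frac\sigma2+\frac{1-2\sigma}\tau
 =2\sigma\cdot\frac14+(1-2\sigma)\cdot\frac1\tau\ge\min\Bigl\{\frac14,\frac1\tau\Bigr\}.
\]
It does this using $c_A=\sigma/(2A)$, $c_S=\sigma/(2S)$ and $c_A+c_S\ge2\sigma/\tau$ to convert the two completion excesses into a multiple of $1/\tau$. To repair your plan, you would at least have to split the main regime into $\tau\ge4$ (target $1/\tau$) and $\tau<4$ (target $\tfrac54$), and then produce both certificates. As written, the decisive estimate is only asserted, and the version you set out to prove is false.
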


\begin{proof}
The pair \((B,C)\) contributes both its total size and the interaction between
its coordinates.  Separate them by setting
\[
 S=B+C,
 \qquad
 h=\frac{BC}{S},
 \qquad
 \kappa=A+h,
 \qquad
 E=e_2(A,B,C)=S\kappa.
\]
Thus \(S,\kappa>0\), and
\[
 \nu=\frac A\kappa,
 \qquad
 \delta=1-\nu=\frac h\kappa,
 \qquad
 \mu=1+\delta.
\]
We measure the desired gain by the normalised product
\(
 \mathcal Q=\phi(x,\mu)\phi(y,\nu)/E.
\)
Also write \(t_+=\max\{t,0\}\).  Since \(\tau>0\), the desired estimate is
equivalent to the single normalised target
\begin{equation}
 \mathcal Q-1>
 \min\left\{\frac14,\frac1\tau\right\}.
 \label{eq:pure-split-normalized-target}
\end{equation}
Thus there are two possible ways to finish: the uniform estimate
\(\mathcal Q>5/4\), which immediately gives
\eqref{eq:pure-split-normalized-target}, or, if that fails, an estimate
retaining the scale \(\tau\).  Before choosing between them, we must know
which completion coordinates are active.  The water-filling formula
\Cref{lem:triangle-water-filling} gives
\begin{align*}
 \phi(x,\mu)
 &=A\mu+\frac{\mu^2}{4}+\frac{(\mu-2A)_+^2}{12},\\
 \phi(y,\nu)
 &=BC+\nu S+\frac{(\nu-C)_+^2}{4}
   +\frac{(C+\nu-2B)_+^2}{12}.
\end{align*}
The three positive parts change formula at
\[
 A=\frac\mu2,
 \qquad C=\nu,
 \qquad C+\nu=2B.
\]
In the regions \(A\leq\mu/2\) and \(C+\nu\geq2B\), direct feasible
completions already give \(\mathcal Q\geq4/3\).  We treat these two regions
first; afterward, the argument itself will determine on which side of the
middle transition we lie.

If \(A\leq\mu/2\), use \((\nu,0,0)\) as a feasible completion for
\(\phi(y,\nu)\).  Together with the exact value of \(\phi(x,\mu)\), this
gives
\begin{align*}
 3E\left(\mathcal Q-\frac43\right)
 &\geq S(A+\mu)^2(h+\nu)-4E\\
 &=S\nu(\kappa-1)^2
   \bigl(\mu^2+\nu\delta\kappa\bigr)\geq0.
\end{align*}
For the second region, suppose that \(C+\nu\geq2B\).  Use
\((0,\mu B/S,\mu C/S)\) as a feasible completion for \(\phi(x,\mu)\).
Together with the exact value of \(\phi(y,\nu)\), this gives
\begin{align*}
 3E\left(\mathcal Q-\frac43\right)
 &\geq (S+\nu)^2
   \left(A\mu+\frac{\mu^2h}{S}\right)-4E\\
 &=\frac{A(S-\mu)^2}{S}
   (S\mu+\nu\delta)\geq0.
\end{align*}
In either situation, \(\mathcal Q\geq4/3>5/4\), so
\eqref{eq:pure-split-normalized-target} holds.  The first situation includes
\(A=0\).  We may therefore assume
\[
 A>\frac\mu2,
 \qquad
 C+\nu<2B.
\]

Under these two inequalities the first and third positive parts vanish.  If
we temporarily discard the remaining nonnegative correction, the product
reduces to
\[
 R_0=\mu\left(1+\frac{\mu}{4A}\right)(A\delta+\nu^2).
\]
Substitution in the two completion formulas gives
\[
 \mathcal Q
 =R_0+\mu\left(1+\frac{\mu}{4A}\right)
       \frac{\nu(\nu-C)_+^2}{4S}.
\]
The uniform bound will therefore follow whenever \(R_0\geq4/3\).  The next
two identities isolate the only sign that can prevent this:
\begin{align*}
 R_0-\frac{3\mu}{4}(2\nu^2+\mu\delta)
 &=\frac{\mu}{4A}(2A-\mu)(2A\delta-\nu^2),\\
 \frac{3\mu}{4}(2\nu^2+\mu\delta)
 &=\frac43+
   \frac{(3\delta-1)^2(3\delta+2)}{12}.
\end{align*}
Since \(2A-\mu>0\), the only undecided sign is that of
\(2A\delta-\nu^2\).  Here \(A>0\), so \(\nu>0\).  Set
\[
 \sigma=\frac{2A\delta}{\nu^2}=\frac{2h}{\nu},
\]
where the second equality follows from \(A\delta=\nu h\).

Then \(2A\delta-\nu^2=\nu^2(\sigma-1)\), so its sign is the sign of
\(\sigma-1\).
Unlike \(\delta\), the parameter \(\sigma\) changes under a common rescaling
of \((A,B,C)\).  If \(\sigma\geq1\), this quantity is nonnegative.
The two identities above and the nonnegative second term in \(\mathcal Q\)
give \(\mathcal Q\geq R_0\geq4/3\).  Hence
\eqref{eq:pure-split-normalized-target} holds for \(\sigma\geq1\).

Only the range \(0\leq\sigma<1\) remains.  We first consider the endpoint
\(\sigma=0\).  Since \(S>0\) and \(B\geq C\geq0\), we have \(\sigma=0\)
if and only if \(C=0\).  In that case \(\delta=0\),
\(\mu=\nu=1\), and
\[
 \mathcal Q-1
 =\frac1{4A}+\frac1{4B}+\frac1{16AB}
 >\frac14\left(\frac1A+\frac1B\right)
 \geq\frac1{A+B}
 \geq\min\left\{\frac14,\frac1\tau\right\}.
\]
Thus \eqref{eq:pure-split-normalized-target} holds when \(\sigma=0\).  This
endpoint also explains why the statement must retain the scale \(\tau\): if
\(A=B=T\), then \(\tau=2T\) and the gain is
\[
 \mathcal Q-1=\frac1\tau+\frac1{4\tau^2},
\]
which approaches \(1/\tau\) as \(T\) grows.  A uniform gain cannot control
this boundary.

We may henceforth assume \(C>0\), so \(h,\delta>0\) and
\(0<\sigma<1\).  The only positive-part term not yet determined is
\((\nu-C)_+\).  We now show that \(C<\nu\), so this term equals \(\nu-C\).
If \(C\geq\nu\), then
\(\nu\leq C\leq S/2\), and monotonicity of \(t(S-t)\) on
\([0,S/2]\) gives
\[
 h=\frac{C(S-C)}S
 \geq\frac{\nu(S-\nu)}S
 \geq\frac\nu2,
\]
contrary to \(2A\delta=2\nu h<\nu^2\).  Hence \(C<\nu\).

The three positive parts are now determined.  Set \(\theta=C/S\), which
measures the balance between \(B\) and \(C\).
Since \(C>0\) and \(B\geq C\), we have \(0<\theta\leq1/2\).  Equality
would give \(B=C\), and then \(C+\nu<2B\) would imply \(\nu<C\),
contrary to \(C<\nu\).  Thus \(0<\theta<1/2\).  Moreover,
we already know that \(0<\sigma<1\).  Since
\(h=S\theta(1-\theta)=A\delta/\nu\), solving the definitions for \(A\)
and \(S\) gives
\begin{align*}
 A&=\frac{\sigma(1-\delta)^2}{2\delta},
 &S&=\frac{\sigma(1-\delta)}{2\theta(1-\theta)}.
\end{align*}
The standing assumptions \(A>\mu/2\) and \(C+\nu<2B\) become,
respectively,
\begin{align*}
 \sigma&>\frac{\delta(1+\delta)}{(1-\delta)^2},
 &\sigma&>\frac{2\theta(1-\theta)}{2-3\theta}.
\end{align*}
The first inequality, together with \(\sigma<1\), gives
\(\delta<1/3\) and \(\sigma>\delta\), while the second gives
\(\sigma>\theta\).  We have therefore shown that
\[
 0<\delta<\frac13,
 \qquad
 0<\theta<\frac12,
 \qquad
 \max\{\delta,\theta\}<\sigma<1.
\]

The parameters \(\delta\), \(\theta\), and \(\sigma\) determine \(A\), \(S\),
and \(\tau=A+S\).  Define
\[
 X=\delta\bigl[(1+\delta)^2
       -2\sigma(1+\delta-\delta^2)\bigr],
 \qquad
 Y=\frac{\theta}{1-\theta}
       \bigl[2(1-\theta)-\sigma\bigr]^2.
\]
Since \(\theta>0\) and \(2(1-\theta)>1>\sigma\), we have \(Y>0\).
Direct substitution gives
\[
 \mathcal Q=
 \frac{(2\sigma+X)\,[4\sigma(\sigma+2)+Y]}{16\sigma^2}.
\]

The lower bound for \(2\sigma+X\) changes at \(\sigma=1/2\).  To see this,
complete it as a square in \(\delta\), with
\[
 \delta_*=\frac{2\sigma-1}{2\sigma+1}.
\]
A direct completion of the square gives
\[
 2\sigma+X
 =\frac{16\sigma^2}{(2\sigma+1)^2}
  +\bigl((2\sigma+1)\delta+2\sigma\bigr)
       (\delta-\delta_*)^2.
\]
The minimiser \(\delta_*\) lies in the feasible half-line if and only if
\(\sigma\geq1/2\); at the threshold, the resulting bound is \(5/4\).  We
therefore split at \(\sigma=1/2\).

Suppose first that \(1/2\leq\sigma<1\).  Since \(Y>0\), the completed square
gives
\begin{align*}
 \mathcal Q
 &>\frac{4\sigma(\sigma+2)}{(2\sigma+1)^2}
 =\frac54+
   \frac{(2\sigma-1)(5-2\sigma)}{4(2\sigma+1)^2}
 \geq\frac54.
\end{align*}
Thus \(\mathcal Q>5/4\) whenever \(1/2\leq\sigma<1\), and
\eqref{eq:pure-split-normalized-target} follows.

We have reached the only case not covered by a uniform bound:
\(0<\sigma<1/2\).  Here the total scale \(\tau\) must be retained.  Put
\(\eta=1-2\sigma>0\).  It is enough to prove
\[
 \mathcal Q-1>\frac\sigma2+\frac\eta\tau.
\]
The right-hand side is the convex combination
\[
 \frac\sigma2+\frac\eta\tau
 =2\sigma\,\frac14+(1-2\sigma)\frac1\tau
 \geq\min\left\{\frac14,\frac1\tau\right\}.
\]
The excess beyond \(\sigma/2\) separates into contributions from the two
completion factors:
\[
 X-\delta^2(1+2\delta)
 =\delta\eta(1+\delta-\delta^2)>0,
\]
so expansion gives
\[
 \mathcal Q-1
 =\frac\sigma2+\frac{4X+Y}{8\sigma}
  +\frac{X(4\sigma^2+Y)}{16\sigma^2}
 >\frac\sigma2+\frac{4X+Y}{8\sigma}.
\]
It is therefore enough to prove
\((4X+Y)/(8\sigma)>\eta/\tau\).

We must compare one contribution associated with \(A\) and one associated
with \(S\) against \(1/(A+S)\).  This suggests choosing positive quantities
whose reciprocals recover \(A+S\).  Set
\[
 c_A=\frac{\delta}{(1-\delta)^2}=\frac\sigma{2A},
 \qquad
 c_S=\frac{\theta(1-\theta)}{1-\delta}=\frac\sigma{2S}.
\]
Their reciprocals satisfy
\[
 c_A^{-1}+c_S^{-1}=\frac{2\tau}{\sigma}.
\]
Together with
\((c_A+c_S)(c_A^{-1}+c_S^{-1})\geq4\), this gives
\[
 c_A+c_S\geq\frac{2\sigma}{\tau}.
\]
Consequently, the proof has been reduced to one estimate from each side:
\[
 X>\eta c_A,
 \qquad
 Y>4\eta c_S.
\]

For the singleton side, define the affine function
\[
 f_\delta(s)=\frac{\delta[(1+\delta)^2
       -2s(1+\delta-\delta^2)]}{c_A}-(1-2s).
\]
On \(0<\delta<1/3\), the identities
\[
 f_\delta(\sigma)-f_\delta(\delta)
 =2\delta\bigl[1+\delta(1-\delta)(2-\delta)\bigr]
   (\sigma-\delta)>0,
 \qquad
 f_\delta(\delta)
 =\delta^3\bigl[(1-\delta)(4-2\delta)+\delta\bigr]>0
\]
imply \(f_\delta(\sigma)>0\).  Since
\(X/c_A-\eta=f_\delta(\sigma)\), we obtain \(X>\eta c_A\).

For the pair side, the required estimate follows from
\begin{align*}
 \frac{Y}{c_S}
 &=(1-\delta)\left(2-\frac{\sigma}{1-\theta}\right)^2\\
 &>(1-\sigma)
   \left(2-\frac\sigma{1-\sigma}\right)^2
 =4\eta+\frac{\sigma^2}{1-\sigma}>4\eta.
\end{align*}
The strict inequality in the second line follows from
\(1-\delta>1-\sigma\) and
\[
 2-\frac{\sigma}{1-\theta}>
 2-\frac{\sigma}{1-\sigma}>0.
\]
Thus \(Y>4\eta c_S\).  Combining this with \(X>\eta c_A\) and the bound on
\(c_A+c_S\) gives
\[
 \frac{4X+Y}{8\sigma}
 >\frac{4\eta(c_A+c_S)}{8\sigma}
 \geq\frac\eta\tau.
\]
Hence \eqref{eq:pure-split-normalized-target} also holds when
\(0<\sigma<1/2\), completing the proof.
\end{proof}

\end{document}